\newtheorem{theorem}{Theorem}[section]
\theoremstyle{definition}
\theoremstyle{remark}
\newtheorem{remark}[theorem]{Remark}
\numberwithin{equation}{section}
\renewcommand{\leq}{\leqslant}
\renewcommand{\geq}{\geqslant}
\renewcommand{\Re}{{\operator@font Re}}
\renewcommand{\Im}{{\operator@font Im}}
\newcommand{\tr}{{\operator@font tr}}
\newcommand{\range}{{\operator@font ran}}
\newcommand{\spn}{{\operator@font span}}
\newcommand{\sinc}{{\operator@font sinc}}
\newcommand{\erf}{{\operator@font erf}}
\newcommand{\Ai}{{\operator@font Ai}}
\newcommand{\cov}{{\operator@font cov}}
\newcommand{\var}{{\operator@font var}}
\newcommand{\sgn}{{\operator@font sign}}
\newcommand{\C}{{\mathbb  C}}
\newcommand{\R}{{\mathbb  R}}
\newcommand{\N}{{\mathbb  N}}
\newcommand{\prob}{{\mathbb  P}}
\newcommand{\projected}[1]{\negthickspace\upharpoonright_{#1}}
\begin{document}

\title[On the Numerical Evaluation of Distributions in Random Matrix Theory]{On the Numerical Evaluation of Distributions\\ in Random Matrix Theory: A Review}

\author{Folkmar Bornemann}
\address{Zentrum Mathematik -- M3, Technische Universität München,
         80290~München, Germany}
\email{bornemann@ma.tum.de}
%\date{\today}

\subjclass[2000]{Primary 15A52, 65R20; Secondary 33E17, 47G10.}

\begin{abstract} In this paper we review and compare the numerical evaluation of those probability distributions
in random matrix theory that are analytically represented in terms of Painlevé transcendents or Fredholm
determinants. Concrete examples for the Gaussian and Laguerre (Wishart) $\beta$-ensembles and their various
scaling limits are discussed. We argue that the numerical approximation of Fredholm determinants is the
conceptually more simple and efficient of the two approaches, easily generalized to the computation of joint probabilities
and correlations. Having the means for extensive numerical explorations at hand, we discovered new and
surprising determinantal formulae for the $k$-th largest (or smallest) level in the edge scaling limits of the Orthogonal and Symplectic Ensembles; formulae that
in turn led to improved numerical evaluations. The paper comes with a toolbox of Matlab functions that
facilitates further mathematical experiments by the reader.
\end{abstract}

\maketitle

\section{Introduction}\label{sect:intro}

Random Matrix Theory (RMT) has found many applications, most notably in physics, multivariate statistics, electrical engineering, and finance.
As soon as there is the need for specific numbers, such as moments, quantiles, or correlations, the actual numerical evaluation of the
underlying probability distributions becomes of interest. Without additional structure there would be, in general, only one method: Monte Carlo simulation. However,
because of the universality of certain scaling limits \citeaffixed{MR2334189}{for a review see, e.g.,}, a family of distinguished distribution functions enters which
is derived from highly structured matrix models enjoying closed analytic solutions. These functions constitute a new class of
special functions comparable in import to the classic distributions of probability theory. This paper addresses the accurate numerical evaluation\footnote{Limiting
the means, as customary in numerical analysis for reasons of efficiency
and strict adherence to numerical stability, to IEEE double precision hardware arithmetic (about 16 digits precision).} of many of these functions
on the one hand and shows, on the other hand, that such work facilitates numerical explorations that may lead, in the sense of Experimental Mathematics \cite{MR2033012},
to new theoretical discoveries, see the results of Section~\ref{sect:edge}.

\subsection{The Common Point of View}

The closed analytic solutions alluded to above are based
(for deeper reasons or because of contingency)
on two concurrent tools: Fredholm determinants of
integral operators and Painlevé transcendents. Concerning the question which of them is better suited
to be attacked numerically,
there has been a prevailing point of view for the last 15 years or so, explicitly formulated by
\citeasnoun[Footnote~10]{MR1791893}: ``Without the
Painlevé representation, the numerical evaluation of the Fredholm determinants is quite involved.'' To understand the possible genesis of this point of view
let us recall the results
for the two most important scaling limits of the Gaussian Unitary Ensemble~(GUE).

\subsubsection{Level Spacing Function of GUE}\label{sect:gaudin} The large matrix limit of GUE, scaled for level spacing~$1$ in the bulk, yields the function
\begin{equation}
E_2(0;s) = \prob(\text{no levels lie in $(0,s)$}).
\end{equation}
\citeasnoun{Gau61} showed that this function can be represented as a Fredholm determinant, namely,\footnote{We use the
same symbol $K$ to denote both, the integral operator $K\projected{X}$ acting on the Hilbert space $X$ and its
kernel function $K(x,y)$.}
\begin{equation}\label{eq:Ksin}
E_2(0;s) = \det\left(I-K_{\sin}\projected{L^2(0,s)}\right),\qquad K_{\sin}(x,y) = \sinc(\pi(x-y)).
\end{equation}
He proceeded by showing that the eigenfunctions of this selfadjoint integral operator
are the radial prolate spheroidal wave functions with certain parameters.
Using tables \cite{MR0074130} of these special functions
he was finally able to evaluate $E_2(0;s)$
numerically.\footnote{Strictly speaking \citeasnoun{Gau61} was concerned with evaluating the level spacing function $E_1(0;s)$ of GOE
that he represented as the Fredholm determinant of the even sine kernel, see (\ref{eq:E10}) below. However, the extension of Gaudin's method to $E_2(0;s)$
is fairly straightforward, see \citeasnoun{MR0143558} for a determinantal formula that is equivalent to (\ref{eq:Ksin}), namely (\ref{eq:E2sum}) with $k=0$, and
 \citeasnoun{Kahn63} for subsequent numerical work. As was pointed out by \citeasnoun[p.~305]{MR866115},
who himself had calculated $E_2(0;s)$ by Gaudin's method using Van Buren's implementation of the prolate wave functions, Kahn's tables, reproduced in the first 1967 edition of
Mehta's book, are rather inaccurate. In contrast, the tables in \citeasnoun{MR0348823}, reproduced in the second 1991 and third 2002 edition of Mehta's book, are basically accurate
\emph{with the proviso} that the arguments have to be read not as the displayed four digit numbers but rather as $s=2 t/\pi$ with $t=0.0$, $0.1$, $0.2$, $0.3$, etc.
A modern implementation of Gaudin's method for $E_2(0;s)$, using Mathematica's fairly recent ability to evaluate
the prolate wave functions, can be found in \citeasnoun[§7.1]{Bornemann1}.}
 On the other hand, in an admirably intricate analytic {tour de force} \citeasnoun{MR573370} expressed the Fredholm determinant by
\begin{equation}
E_s(0;s) = \exp\left(-\int_0^{\pi s} \frac{\sigma(x)}{x}\,dx\right)
\end{equation}
in terms of the Jimbo--Miwa--Okamoto $\sigma$-form of Painlevé V, namely
\begin{equation}
(x \sigma_{xx})^2 = 4(\sigma - x \sigma_x)(x \sigma_x - \sigma -  \sigma_x^2), \qquad \sigma(x) \simeq \frac{x}{\pi} + \frac{x^2}{\pi^2} \quad(x \to 0).
\end{equation}
\subsubsection{The Tracy--Widom distribution} The large matrix limit of GUE, scaled for the fluctuations at the soft edge (that is, the maximum eigenvalue), yields the function
\begin{equation}
F_2(s) = \prob(\text{no levels lie in $(s,\infty)$}).
\end{equation}
Implicitly known for quite some time \citeaffixed{MR0161680,MR1040213,MR1182169}{see, e.g.,}, the determinantal representation
\begin{equation}\label{eq:KAi}
F_2(s) = \det\left(I-K_{\Ai}\projected{L^2(s,\infty)}\right),\qquad K_{\Ai}(x,y) = \frac{\Ai(x)\Ai'(y)-\Ai'(x)\Ai(y)}{x-y},
\end{equation}
was spelt out by \citeasnoun{MR1236195} and by \citeasnoun{MR1215903}.
The search for an analogue to Gaudin's method remained unsuccessful since there is no solution of the corresponding eigenvalue problem known in terms of classic special functions \cite[p.~453]{MR2129906}.
It was therefore
a major breakthrough when \citename{MR1257246} \citeyear{MR1215903,MR1257246} derived their now famous representation
\begin{equation}
F_2(s) = \exp\left(-\int_s^\infty (x-s) u(x)^2\,dx\right)
\end{equation}
in terms of the Hastings--McLeod \citeyear{MR555581} solution $u(x)$ of Painlevé II, namely
\begin{equation}\label{eq:HM}
u_{xx} =2 u^3 + xu,\qquad u(x) \simeq \Ai(x)\quad(x\to\infty).
\end{equation}
Subsequent numerical evaluations were then, until the recent work of \citeasnoun{Bornemann1}, exclusively based on solving this asymptotic initial value problem.

\subsection{Challenging the Common Point of View}

In this paper we challenge the common point of view that a Painlevé representation would be, at least numerically, preferable to a Fredholm determinant formula. We do so from the following angles:
Simplicity, efficiency, accuracy, and extendibility. Let us briefly indicate the rationale for our point of view.

\begin{enumerate}
\item The numerical evaluation of Painlevé transcendents encountered in RMT is more involved as one would think at first sight. For
reasons of numerical stability one  needs additional, deep analytic knowledge, namely, asymptotic expansions of the corresponding connection formulae
(see Section~\ref{sect:painleve}).\smallskip
\item There is an extremely simple, fast, accurate, and \emph{general} numerical method for evaluating Fredholm determinants (see Section~\ref{sect:fredholm}).
\smallskip
\item Multivariate functions such as joint probability distributions often have a representation by a Fredholm determinant (see Section~\ref{sect:joint}).
On the other hand, if available at all, a representation in terms of a nonlinear \emph{partial} differential equation is of very limited numerical use right now.
\end{enumerate}

\subsection{Outline of the Paper}

In Section~\ref{sect:stage} we collect some fundamental functions of RMT whose numerical solutions will play a role in the sequel.
The intricate issues of a numerical solution of the Painlevé transcendents encountered in
RMT are subject of Section~\ref{sect:painleve}. An exposition of \possessivecite{Bornemann1} method
for the numerical evaluation of Fredholm determinants is given in Section~\ref{sect:fredholm}.
The numerical evaluation of the $k$-level spacings in the bulk of GOE, GUE, and GSE, by using Fredholm determinants, is addressed in Section~\ref{sect:bulk}.
In Section~\ref{sect:edge} we get to new determinantal formulae for the distributions of the $k$-th largest level in the soft edge scaling limit of GOE and GSE. These formulae
rely on a  determinantal identity that we found by extensive numerical experiments before proving it. By a powerful structural analogy, in Section~\ref{sect:hard} these formulae
are easily extended to the $k$-th smallest level in the hard edge scaling limit of LOE and LSE. In Section~\ref{sect:joint}, we discuss some examples of
joint probabilities, like the one for the largest two eigenvalues of GUE at the soft edge or the one of the Airy process for two different times. Finally, in
Section ~\ref{sect:software}, we give a short introduction into using the Matlab toolbox that comes with this paper.

\section{Some Distribution Functions of RMT and Their Representation}\label{sect:stage}

In this section we collect some fundamental functions of RMT whose numerical evaluation will explicitly be discussed in the sequel. We do not strive for
completeness here; but we will give sufficiently many examples to be able to judge of simplicity and generality of the numerical approaches later on.

We confine ourselves
to the Gaussian (Hermite) and Laguerre (Wishart) ensembles. That is, we
consider $n\times n$ random matrix ensembles with real (nonnegative in the case of the Laguerre ensemble) spectrum such that the joint probability distribution of its (unordered) eigenvalues is
given by
\begin{equation}
p(x_1,\ldots,x_n) = c\, \prod_i w(x_i) \cdot \prod_{i<j} |x_i-x_j|^\beta.
\end{equation}
Here,  $\beta$ takes the values $1$, $2$, or $4$ (Dyson's ``three fold way''). The weight function $w(x)$ will be either a Gaussian
or, in the case of the Laguerre ensembles, a function of the type $w_\alpha(x) = x^\alpha e^{-x}$ $(\alpha>-1)$.

\subsection{Gaussian Ensembles} Here, we take the Gaussian weight functions\footnote{We follow  \citeasnoun{MR1842786} in the choice of the variances of the Gaussian weights.
Note that \citeasnoun[Chap.~3]{MR2129906}, such as Tracy and Widom in most of their work,
uses $w(x) = e^{-\beta x^2/2}$. However, one has to be alert: from p.~175 onwards, Mehta uses $w(x) = e^{-x^2}$ for $\beta=4$ in his book, too.}

\begin{itemize}
\item $w(x) = e^{-x^2/2}$ for $\beta=1$, the Gaussian Orthogonal Ensemble (GOE),\smallskip
\item $w(x) = e^{-x^2}$ for $\beta=2$, the Gaussian Unitary Ensemble (GUE),\smallskip
\item $w(x) = e^{-x^2}$ for $\beta=4$, the Gaussian Symplectic Ensemble (GSE).
\end{itemize}

\noindent
We define, for an open interval $J \subset \R$, the basic quantity
\begin{multline}
E_\beta^{(n)}(k;J)\\*[2mm] = \prob\left(\text{exactly $k$ eigenvalues of the $n\times n$ Gaussian $\beta$-ensemble lie in $J$}\right).
\end{multline}
More general quantities will be considered in Section~\ref{sect:general}.

\subsubsection{Scaling limits}
The \emph{bulk scaling} limit is given by \citeaffixed[§§6.3, 7.2, and 11.7]{MR2129906}{see}
\begin{equation}
E_\beta^{(\text{bulk})}(k;J) =
\begin{cases}
\displaystyle\lim_{n\to \infty} E_\beta^{(n)}(k;\pi\, 2^{-1/2} n^{-1/2} J) &\quad\; \text{$\beta=1$ or $\beta=2$},\\*[3mm]
\displaystyle\lim_{n\to \infty} E_\beta^{(n)}(k;\pi\, n^{-1/2} J)  &\quad\; \text{$\beta=4$}.
\end{cases}
\end{equation}
The \emph{soft edge scaling} limit is given by \citeaffixed[p.~194]{MR1842786}{see}
\begin{equation}\label{eq:GEedge}
E_\beta^{(\text{soft})}(k;J) =
\begin{cases}
\displaystyle\lim_{n\to \infty} E_\beta^{(n)}(k;\sqrt{2n} + 2^{-1/2} n^{-1/6} J)   &\quad\;\text{$\beta=1$ or $\beta=2$},\\*[3mm]
\displaystyle\lim_{n\to \infty} E_\beta^{(n/2)}(k;\sqrt{2n} + 2^{-1/2} n^{-1/6} J)&\quad\;\text{$\beta=4$}.
\end{cases}
\end{equation}

\subsubsection{Level spacing}
The $k$-level spacing function $E_\beta(k;s)$ of \citeasnoun[§6.1.2]{MR2129906} is
\begin{equation}\label{eq:Ebeta}
E_\beta(k;s) = E_\beta^{(\text{bulk})}(k;(0,s)).
\end{equation}
The $k$-level spacing density $p_\beta(k;s)$, that is, the probability density of the distance of a level in the bulk to its $(k+1)$-st next neighbor  is \citeaffixed[Eq.~(6.1.18)]{MR2129906}{see}
\begin{equation}\label{eq:pbeta}
p_\beta(k;s) = \frac{d^2}{ds^2} \sum_{j=0}^{k} (k+1-j) E_\beta(j;s)\qquad(k=0,1,2,\ldots).
\end{equation}
Since the bulk scaling limit was made such that the expected distance between neighboring eigenvalues is one,
we have \citeaffixed[Eq.~(6.1.26)]{MR2129906}{see}
\begin{equation}\label{eq:integralconstraint}
\int_0^\infty p_\beta(k;s) = 1,\qquad  \int_0^\infty s p_\beta(k;s) \,ds = k+1.
\end{equation}
Likewise, there holds  for $s \geq 0$
\begin{equation}\label{eq:spacingconstraint}
\sum_{k=0}^\infty E_\beta(k;s) = 1,\qquad \sum_{k=0}^\infty kE_\beta(k;s) = s;
\end{equation}
see, e.g., \citeasnoun[Eqs.~(6.1.19/20)]{MR2129906} or \citeasnoun[p.~119]{MR1677884}.
Such constraints are convenient means to assess the accuracy of numerical methods (see Examples~\ref{sect:constraint}/\ref{sect:constraint2} and, for
a generalization, Example~\ref{sect:generalconstraint}).

\subsubsection{Distribution of the $k$-th largest eigenvalue}

The cumulative distribution function of the $k$-th largest eigenvalue is, in the soft edge scaling limit,
\begin{equation}\label{eq:Fbeta}
F_\beta(k;s) = \sum_{j=0}^{k-1}E_\beta^{(\text{soft})}(j;(s,\infty)).
\end{equation}
The famous Tracy--Widom \citeyear{MR1385083} distributions $F_\beta(s)$ are given by
\begin{equation}\label{eq:TWbeta}
F_\beta(s) = \begin{cases}
F_\beta(1;s) &\qquad \text{$\beta=1$ or $\beta=2$},\\*[2mm]
F_\beta(1;\sqrt{2}\,s) &\qquad \text{$\beta=4$}.
\end{cases}
\end{equation}

\subsection{Determinantal Representations for Gaussian Ensembles}
\subsubsection{GUE}

Here, the basic formula is \citeaffixed[§5.4]{MR1677884}{see}
\begin{subequations}\label{eq:detGUEn}
\begin{align}
E_2^{(n)}(k;J) &= \frac{(-1)^k}{k!}\left.\frac{d^k}{dz^k}\, D_2^{(n)}(z;J)\right|_{z=1},\label{eq:D2nDet}\\*[2mm]
D_2^{(n)}(z;J) &= \det\left(1- z\, K_n \projected{L^2(J)} \right),\label{eq:D2n}
\end{align}
\end{subequations}
with the Hermite kernel (the second form follows from Christoffel--Darboux)
\begin{equation}\label{eq:Kn}
K_n(x,y) = \sum_{k=0}^{n-1} \phi_k(x)\phi_k(y) = \sqrt{\frac n2}\,\frac{\phi_n(x)\phi_{n-1}(y) - \phi_{n-1}(x)\phi_n(y)}{x-y}
\end{equation}
that is built from the $L^2(\R)$-orthonormal system of the Hermite functions
\begin{equation}
\phi_k(x) = \frac{e^{-x^2/2} H_k(x)}{\pi^{1/4}\sqrt{k!} \,2^{k/2}}.
\end{equation}
The bulk scaling limit is given by \citeaffixed[§A.10]{MR2129906}{see}
\begin{subequations}\label{eq:E2bulk}
\begin{align}
E_2^{(\text{bulk})}(k;J) &= \frac{(-1)^k}{k!}\left.\frac{d^k}{dz^k}\, D_2^{(\text{bulk})}(z;J)\right|_{z=1},\\*[2mm]
D_2^{(\text{bulk})}(z;J) &= \det\left(1- z\, K_{\sin} \projected{L^2(J)} \right),\label{eq:D2bulk}
\end{align}
\end{subequations}
with the sine kernel $K_{\sin}$ defined in (\ref{eq:Ksin}).
The soft edge scaling limit is given by \citeaffixed[§3.1]{MR1236195}{see}
\begin{subequations}\label{eq:E2Edge}
\begin{align}
E_2^{(\text{soft})}(k;J) &= \frac{(-1)^k}{k!}\left.\frac{d^k}{dz^k}\, D_2^{(\text{soft})}(z;J)\right|_{z=1},\\*[2mm]
D_2^{(\text{soft})}(z;J) &= \det\left(1- z\, K_{\Ai} \projected{L^2(J)} \right),\label{eq:D2edge}
\end{align}
\end{subequations}
with the Airy kernel $K_{\Ai}$ defined in (\ref{eq:KAi}).

\subsubsection{GSE} Using Dyson's quaternion determinants one can find a
determinantal formula for $E^{(n)}_4(k;J)$ which involves a finite rank matrix kernel \citeaffixed[Chap.~8]{MR2129906}{see}---a formula that is amenable to the
numerical methods of Section~\ref{sect:fredholm}.
We confine ourselves to the soft edge scaling limit of this formula which yields
\cite{MR2187952}
\begin{subequations}\label{eq:GSEMatrixKernelDet}
\begin{equation}
E_4^{(\text{soft})}(k;J) = \frac{(-1)^k}{k!}\left.\frac{d^k}{dz^k}\, \sqrt{D_4(z;J)}\right|_{z=1},
\end{equation}
where the entries of the matrix kernel determinant
\begin{equation}
D_4(z;J) = \det\left(I - \frac{z}{2}
\begin{pmatrix}
S & SD \\*[1mm]
IS & S^*
\end{pmatrix}{\projected{L^2(J)\oplus L^2(J)}}\right)
\end{equation}
\end{subequations}
are given by (with the adjoint kernel $S^*(x,y)=S(y,x)$ obtained from transposition)
\begin{subequations}
\begin{align}
S(x,y) &= K_{\Ai}(x,y) - \frac{1}{2} \Ai(x) \int_y^\infty \Ai(\eta)\,d\eta,\label{eq:S}\\*[1mm]
SD(x,y) &= -\partial_y K_{\Ai}(x,y) - \frac{1}{2} \Ai(x)\Ai(y),\label{eq:SD}\\*[1mm]
IS(x,y) &= -\int_x^\infty K_{\Ai}(\xi,y)\,d\xi + \frac{1}{2} \int_x^\infty\Ai(\xi)\,d\xi\,\int_y^\infty \Ai(\eta)\,d\eta.\label{eq:IS}
\end{align}
\end{subequations}
Albeit this expression is also amenable to the numerical methods of Section~\ref{sect:fredholm}, we will discuss,
for the significant special cases
\begin{equation}
E_4^{(\text{bulk})}(k;(0,s)) \quad\text{and}\quad E_4^{(\text{soft})}(k;(s,\infty)),
\end{equation}
alternative determinantal formulae that are  far more efficient numerically, see Section~\ref{sect:bulk} and Section~\ref{sect:edge}, respectively.

\subsubsection{GOE}
There are also determinantal formulae for $E_1^{(\text{bulk})}(k;J)$ and its various scaling limits,
see \citeasnoun[Chap.~7]{MR2129906} and \citeasnoun{MR2187952}. However, these determinantal formulae are based on matrix kernels that involve a {\em discontinuous} term.
To be specific we recall the result for the soft edge scaling limit:
\begin{subequations}\label{eq:GOEMatrixKernelDet}
\begin{equation}
E_1^{(\text{soft})}(k;J) = \frac{(-1)^k}{k!}\left.\frac{d^k}{dz^k}\, \sqrt{D_1(z;J)}\right|_{z=1},
\end{equation}
where the entries of the matrix kernel determinant
\begin{equation}
D_1(z;J) = \det\left(I - z
\begin{pmatrix}
S & SD \\*[1mm]
IS_\epsilon & S^*
\end{pmatrix}\projected{X_1(J)\oplus X_2(J)}\right)
\end{equation}
\end{subequations}
are given by (with the adjoint kernel $S^*(x,y)=S(y,x)$ obtained from transposition)
\begin{subequations}
\begin{align}
S(x,y) &= K_{\Ai}(x,y) + \frac{1}{2} \Ai(x) \left(1-\int_y^\infty \Ai(\eta)\,d\eta\right),\\*[1mm]
SD(x,y) &= -\partial_y K_{\Ai}(x,y) - \frac{1}{2} \Ai(x)\Ai(y),\\*[1mm]
IS_\epsilon(x,y) &= -\epsilon(x-y) -\int_x^\infty K_{\Ai}(\xi,y)\,d\xi\notag\\*[1mm]
& \qquad + \frac{1}{2} \left( \int_y^x \Ai(\xi)\,d\xi + \int_x^\infty\Ai(\xi)\,d\xi\,\int_y^\infty \Ai(\eta)\,d\eta\right).
\end{align}
with the \emph{discontinuous} function
\begin{equation}
\epsilon(x) = \tfrac{1}{2} \sgn(x).
\end{equation}
\end{subequations}
This discontinuity poses considerable difficulties for the proper theoretical justification of the operator determinant: for appropriately chosen
weighted $L^2$ spaces $X_1(J)$ and $X_2(J)$, the matrix kernel operator is a Hilbert--Schmidt operator with a trace class diagonal and the determinant has to be understood as a Hilbert--Carleman regularized
determinant \citeaffixed[p.~2199]{MR2187952}{see}.
Moreover, it renders the unmodified numerical methods of Section~\ref{sect:fredholm} rather inefficient. Nevertheless, for
the significant special cases
\begin{equation}
E_1^{(\text{bulk})}(k;(0,s)) \quad\text{and}\quad E_1^{(\text{soft})}(k;(s,\infty))
\end{equation}
there are alternative determinantal formulae, which are amenable to an efficient numerical evaluation, see Section~\ref{sect:bulk} and Section~\ref{sect:edge}, respectively.

\subsection{Painlevé Representations for Gaussian Ensembles}

For the important family of \emph{integrable} kernels \citeaffixed{MR1730504}{see}, \citename{MR1277933} \citeyear{MR1253763,MR1215903,MR1257246,MR1277933} found a general method to represent determinants of the
form
\begin{equation}
\det\left(I-z K\projected{L^2(a,b)}\right)
\end{equation}
explicitly by a system of partial differential equations with respect to the independent variables $a$ and $b$. Fixing one of the bounds yields an ordinary differential
equation (that notwithstanding depends on the fixed bound). In RMT, this ordinary differential equation turned out, case by case, to be a Painlevé equation.
The typical choices of intervals with a fixed bound are $J=(0,s)$ or $J=(s,\infty)$, depending on whether one looks at the bulk or the soft edge of the spectrum.

\subsubsection{GUE}

\citeasnoun{MR1277933} calculated, for the determinant (\ref{eq:D2n}) with $J=(s,\infty)$, the representation
\begin{equation}
D_2^{(n)}(z;(s,\infty)) = \exp\left(-\int_s^\infty \sigma(x;z)\,dx\right)
\end{equation}
in terms of the Jimbo--Miwa--Okamoto $\sigma$-form of Painlevé IV, namely
\begin{subequations}\label{eq:PIVGE}
\begin{align}
\sigma_{xx}^2 &= 4(\sigma-x \sigma_x)^2 -4 \sigma_x^2(\sigma_x+2n),\\*[3mm]
 \sigma(x;z)  &\simeq z\, \frac{2^{n-1} x^{2n-2}}{\sqrt\pi\, \Gamma(n)} e^{-x^2} \qquad(x\to\infty).
\end{align}
\end{subequations}
As mentioned in the introduction, for the determinant (\ref{eq:D2bulk}) and $J=(0,s)$, \citeasnoun{MR573370} found  the representation \citeaffixed[Thm.~9]{MR1253763}{see also}
\begin{equation}
D_2^{(\text{bulk})}(z;(0,s)) = \exp\left(-\int_0^{\pi s} \frac{\sigma(x;z)}{x}\,dx\right)
\end{equation}
in terms of the Jimbo--Miwa--Okamoto $\sigma$-form of Painlevé V, namely
\begin{subequations}\label{eq:PVGE}
\begin{align}
(x \sigma_{xx})^2 &= 4(\sigma - x \sigma_x)(x \sigma_x - \sigma -  \sigma_x^2),\\*[1mm]
\sigma(x;z) &\simeq  \frac{z}{\pi}x + \frac{z^2}{\pi^2}x^2 \qquad(x \to 0).
\end{align}
\end{subequations}
Finally, there is \citename{MR1257246}'s \citeyear{MR1215903,MR1257246} famous representation of the determinant (\ref{eq:D2edge}) for $J=(s,\infty)$,
\begin{equation}\label{eq:D2PII}
D_2^{(\text{soft})}(z;(s,\infty)) = \exp\left(-\int_s^\infty (x-s)u(x;z)^2\,dx\right)
\end{equation}
in terms of Painlevé II,\footnote{For a better comparison with the other examples
we recall that \citeasnoun[Eq.~(1.16)]{MR1257246} also gave the respresentation
\begin{equation}\label{eq:F2PII}
D_2^{(\text{soft})}(z;(s,\infty)) = \exp\left(-\int_s^\infty \sigma(x;z)\,dx\right)
\end{equation}
in terms of the Jimbo--Miwa--Okamoto $\sigma$-form of Painlevé II:
\begin{equation}
\sigma_{xx}^2 = -4\sigma_x(\sigma-x\sigma_x) - 4 \sigma_x^3,\qquad \sigma(x;z) \simeq z\, (\Ai'(x)^2 -x \Ai(x)^2) \quad(x\to\infty).
\end{equation}
}
namely
\begin{equation}\label{eq:PIIGE}
u_{xx} = 2u^3 + xu,\qquad
u(x;z)\simeq \sqrt{z}\, \Ai(x)\quad(x\to\infty).
\end{equation}

\subsubsection{GOE and GSE in the bulk} With $\sigma(x)=\sigma(x;1)$ from (\ref{eq:PVGE}) there holds the representation \cite{MR1173848}
\begin{subequations}
\begin{align}
E_1(0;s) &= \exp\left(-\frac12\int_0^{\pi s} \sqrt{\frac{d}{dx}\frac{\sigma(x)}{x}}\,dx\right) E_2(0;s)^{1/2},\\*[2mm]
E_4(0;s/2) &= \cosh\left(\frac12\int_0^{\pi s} \sqrt{\frac{d}{dx}\frac{\sigma(x)}{x}}\,dx\right) E_2(0;s)^{1/2}.
\end{align}
\end{subequations}
Painlevé representations for $E_1(k;s)$ and $E_4(k;s)$ can be found in \citeasnoun{MR1173848}.

\subsubsection{GOE and GSE at the soft edge} \citeasnoun{MR1385083} found, with $u(x)=u(x;1)$ being the Hastings--McLeod solution of
(\ref{eq:PIIGE}), the representation
\begin{subequations}\label{eq:GOEGSEPII}
\begin{align}
F_1(1;s) &= \exp\left(-\frac{1}{2}\int_s^\infty u(x)\,dx \right) F_2(1;s)^{1/2},\\*[2mm]
F_4(1;s) &= \cosh\left(\frac{1}{2}\int_s^\infty u(x)\,dx \right) F_2(1;s)^{1/2}.\label{eq:F4PII}
\end{align}
\end{subequations}
More general, \citeasnoun{Dieng05} found  Painlevé representations of $F_1(k;s)$ and $F_4(k;s)$.

\subsection{Laguerre Ensembles} Here, we take, on $x\in(0,\infty)$ with parameter $\alpha>-1$, the weight functions\footnote{We follow \citeasnoun{MR1842786} in this particular choice of the weights; as for the Gaussian ensembles
 notations differ from reference to reference by various scaling factors.}

\begin{itemize}
\item $w_\alpha(x) = x^\alpha e^{-x/2}$ for $\beta=1$, the Laguerre Orthogonal Ensemble (LOE),\smallskip
\item $w_\alpha(x) = x^\alpha e^{-x}$ for $\beta=2$, the Laguerre Unitary Ensemble (LUE),\smallskip
\item $w_\alpha(x) = x^\alpha e^{-x}$ for $\beta=4$, the Laguerre Symplectic Ensemble (LSE).
\end{itemize}
We define, for an open interval $J \subset(0,\infty)$, the basic quantity
\begin{multline}
E_{\text{$\beta$-LE}}^{(n)}(k;J,\alpha) = \prob(\text{exactly $k$ eigenvalues of the}\\*[1mm]
 \text{$n\times n$ Laguerre $\beta$-ensemble with parameter $\alpha$ lie in $J$}).
\end{multline}

\subsubsection{Scaling limits} The large matrix limit at the {\em hard edge} is \citeaffixed[p.~2993]{MR2275509}{see}
\begin{equation}
E_\beta^{(\text{hard})}(k;J,\alpha) =
\begin{cases}
\displaystyle \lim_{n\to\infty}E_{\text{$\beta$-LE}}^{(n)}(k;4^{-1}n^{-1} J,\alpha) &\quad\; \text{$\beta=1$ or $\beta=2$},\\*[3mm]
\displaystyle \lim_{n\to\infty}E_{\text{$\beta$-LE}}^{(n/2)}(k;4^{-1}n^{-1} J,\alpha) &\quad\; \text{$\beta=4$}.
\end{cases}
\end{equation}
The large matrix limit at the {\em soft edge} gives exactly the same result (\ref{eq:GEedge}) as for the Gaussian ensembles, namely \citeaffixed[p.~2992]{MR2275509}{see}
\begin{equation}
E_\beta^{(\text{soft})}(k;J) =
\begin{cases}
\displaystyle \lim_{n\to\infty}E_{\text{$\beta$-LE}}^{(n)}(k;4n+2(2n)^{1/3} J,\alpha) &\quad\; \text{$\beta=1$ or $\beta=2$},\\*[3mm]
\displaystyle \lim_{n\to\infty}E_{\text{$\beta$-LE}}^{(n/2)}(k;4n+2(2n)^{1/3} J,\alpha) &\quad\; \text{$\beta=4$},
\end{cases}
\end{equation}
independently of $\alpha$.
Likewise, a proper bulk scaling limit yields $E_\beta^{(\text{bulk})}(k;J)$ as for the Gaussian ensembles \citeaffixed[p.~291]{MR1266485}{see}.

In the rest of this section we confine ourselves to the discussion of the LUE; determinantal formulae for LOE and LSE at the hard edge are given in Section~\ref{sect:hard}.

\subsection{Determinantal Representations for the LUE}

Here, the basic formula is \citeaffixed[§19.1]{MR2129906}{see}
\begin{subequations}\label{eq:detLUEn}
\begin{align}
E_{\text{LUE}}^{(n)}(k;J,\alpha) &= \frac{(-1)^k}{k!}\left.\frac{d^k}{dz^k}\, D_{\text{LUE}}^{(n)}(z;J,\alpha)\right|_{z=1},\\*[2mm]
D_{\text{LUE}}^{(n)}(z;J,\alpha) &= \det\left(1- z\, K_{n,\alpha} \projected{L^2(J)} \right),\label{eq:DLUEn}
\end{align}
\end{subequations}
with the Laguerre kernel (the second form follows from Christoffel--Darboux)
\begin{equation}\label{eq:Kna}
K_{n,\alpha}(x,y) = \sum_{k=0}^{n-1} \phi_k^{(\alpha)}(x) \phi_k^{(\alpha)}(y)
= -\sqrt{n(n+\alpha)}\;\frac{\phi_n^{(\alpha)}(x)\phi_{n-1}^{(\alpha)}(y) - \phi_{n-1}^{(\alpha)}(x)\phi_n^{(\alpha)}(y)}{x-y}
\end{equation}
that is built from the Laguerre polynomials $L_k^{(\alpha)}(x)$ by the $L^2(0,\infty)$-orthonormal systems of functions
\begin{equation}
\phi_k^{(\alpha)}(x) = \sqrt{\frac{k!}{\Gamma(k+\alpha+1)}} \,x^{\alpha/2} e^{-x/2} L_k^{(\alpha)}(x).
\end{equation}
The scaling limit at the hard edge is given by \cite{MR1236195}
\begin{subequations}\label{eq:E2hard}
\begin{align}
E_{2}^{(\text{hard})}(k;J,\alpha) &= \frac{(-1)^k}{k!}\left.\frac{d^k}{dz^k}\, D_2^{(\text{hard})}(z;J,\alpha)\right|_{z=1},\\*[2mm]
D_2^{(\text{hard})}(z;J,\alpha) &= \det\left(1- z\, K_{\alpha} \projected{L^2(J)} \right),\label{eq:DLUEhard}
\end{align}
with the Bessel kernel
\begin{equation}\label{eq:Ka}
K_a(x,y) = \frac{J_\alpha(\sqrt{x}) \sqrt{\smash[b]{y}}\, J_\alpha'(\sqrt{\smash[b]{y}}) - \sqrt{x}\, J_\alpha'(\sqrt{x}) J_\alpha(\sqrt{\smash[b]{y}})}{2(x-y)}.
\end{equation}
\end{subequations}

Note that for non-integer parameter $\alpha$ the Laguerre kernel $K_{n,\alpha}(x,y)$ and the Bessel kernel $K_\alpha(x,y)$ exhibit algebraic singularities at $x=0$ or $y=0$; see Section~\ref{sect:singularity}
for the bearing of this fact on the choice of numerical methods.

\subsection{Painlevé Representations for the LUE}

\citeasnoun{MR1277933} calculated, for the determinant (\ref{eq:DLUEn}) with  $J=(0,s)$, the representation
\begin{equation}
D_{\text{LUE}}^{(n)}(z;(0,s),\alpha) = \exp\left(-\int_0^s \frac{\sigma(x;z)}{x}\,dx\right)
\end{equation}
in terms of the Jimbo--Miwa--Okamoto $\sigma$-form of Painlevé V, namely
\begin{subequations}\label{eq:PVLE}
\begin{align}
(x\sigma_{xx})^2 &= (\sigma-x\sigma_x-2\sigma_x^2+(2n+\alpha)\sigma_x)^2 -4\sigma_x^2(\sigma_x-n)(\sigma_x-n-\alpha),\\*[3mm]
 \sigma(x;z)  &\simeq z\, \frac{\Gamma(n+\alpha+1)}{\Gamma(n)\Gamma(\alpha+1)\Gamma(\alpha+2)} x^{\alpha+1} \qquad(x\to 0).
\end{align}
\end{subequations}
Accordingly, \citeasnoun{MR1266485} obtained, for the determinant (\ref{eq:DLUEhard}) of the scaling limit at the hard edge,
the representation
\begin{equation}
D_2^{(\text{hard})}(z;(0,s),\alpha) = \exp\left(-\int_0^s \frac{\sigma(x;z)}{x}\,dx\right)
\end{equation}
in terms of the Jimbo--Miwa--Okamoto $\sigma$-form of Painlevé III, namely
\begin{subequations}\label{eq:PIIILE}
\begin{align}
(x\sigma_{xx})^2 &= \alpha^2 \sigma_x^2-\sigma_x(\sigma-x\sigma_x)(4\sigma_x-1),\\*[3mm]
 \sigma(x;z)  &\simeq \frac{z}{\Gamma(\alpha+1)\Gamma(\alpha+2)} \left(\frac{x}{4}\right)^{\alpha+1} \qquad(x\to 0).
\end{align}
\end{subequations}

\section{Numerics of Painlevé Equations: the Need for Connection Formulae}\label{sect:painleve}

\subsection{The Straightforward Approach: Solving the Initial Value Problem}

All the five examples (\ref{eq:PIVGE}), (\ref{eq:PVGE}), (\ref{eq:PIIGE}), (\ref{eq:PVLE}), and (\ref{eq:PIIILE})
of a Painlevé representation given in Section~\ref{sect:stage} take the form of an \emph{asymptotic} initial
value problem (IVP); that is, one looks, on a given interval $(a,b)$, for the solution $u(x)$ of a second order ordinary differential equation
\begin{equation}
u''(x) = f(x,u(x),u'(x))
\end{equation}
subject to an asymptotic ``initial'' (i.e., one sided) condition, namely \emph{either}
\begin{equation}
u(x) \simeq u_a(x) \qquad(x\to a)
\end{equation}
\emph{or}
\begin{equation}
u(x) \simeq u_b(x) \qquad(x\to b).
\end{equation}
Although we have given only the first terms of an asymptotic expansion, further terms can be obtained by symbolic calculations. Hence,
we can typically {\em choose} the order of approximation of $u_a(x)$ or $u_b(x)$ at the given ``initial'' point. Now, the straightforward approach
for a numerical solution would be to choose $a_+ > a$ or $b_- < b$ sufficiently close and compute a solution $v(x)$ of the initial
value problem
\begin{equation}
 v''(x) = f(x, v(x),v'(x))
\end{equation}
subject to proper initial conditions,  namely \emph{either}
\begin{equation}
v(a_+) = u_a(a_+),\qquad v'(a_+) = u_a'(a_+),
\end{equation}
{\em or}
\begin{equation}
v(b_-) = u_b(b_-),\qquad v'(b_-) = u_b'(b_-).
\end{equation}
However, for principal reasons that we will discuss in this section, the straightforward IVP approach unavoidably runs into instabilities.

\subsubsection{An example: the Tracy--Widom distribution $F_2$} From a numerical point of view, the Painlevé II problem (\ref{eq:PIIGE}) is certainly the
most extensively studied case. We look at the Hastings--McLeod solution $u(x) = u(x;1)$ of Painlevé II and the corresponding Tracy--Widom distribution\footnote{There is no
need for a numerical quadrature here (and in likewise cases): simply add the differential equation $(\log F_2(s))'' = -u(s)^2$
to the system of differential equations to be solved numerically; see \citeasnoun{Edelman05}. The same idea applies to the BVP approach in Section~\ref{sect:TW2}.}
\begin{equation}\label{eq:TW}
F_2(s) = \exp\left(-\int_s^\infty(x-s) u(x)^2\,dx\right).
\end{equation}
The initial value problem to be solved numerically is
\begin{equation}
v''(x) = 2v(x)^3 +x\, v(x),\qquad v(b_-) = \Ai(b_-),\quad v'(b_-) = \Ai'(b_-).
\end{equation}
Any value of $b_-\geq 8$ gives initial values that are good to machine precision (in IEEE double precision, which is about 16 significant decimal places).
We have solved the initial value problem with $b_-=12$, using a Runge--Kutta method with automatic error and step size control as coded in Matlab's {\tt ode45},
which is essentially the code published in \citeasnoun{Edelman05} and \citeasnoun{MR2168344}.
The red lines in Figure~\ref{fig:F2Error} show the absolute error $|v(x)-u(x)|$ and the corresponding error in the calculation of $F_2$. We observe that
the error of $v(x)$ grows exponentially to the left of $b_-$ and the numerical solution ceases to exist (detecting a singularity) at about $x=-5.56626$.
Though the implied values of $F_2$ are not completely inaccurate, there is a loss of more than 10 digits in absolute precision, which renders the straightforward approach  numerically
instable (that is, unreliable in fixed precision hardware arithmetic).

To nevertheless obtain a solution that is accurate to 16 digits \citeasnoun{MR2070096} turned, instead of changing the method, to variable precision software arithmetic (using up to 1500 significant digits in Mathematica) and solved the initial
value problem with $b_-=200$ and appropriately many terms of an asymptotic expansion $u_b(x)$. \citeasnoun{Praehofer03}
put tables of $u(x)$, $F_2(s)$ and related quantities to the web, for arguments from $-40$ to $200$ with a step size of $1/16$. We have used
these data as reference solutions in calculating the errors reported in Figure~\ref{fig:F2Error} and Table~\ref{tab:comparison}.

\begin{figure}[tbp]
\begin{center}
\begin{minipage}{0.49\textwidth}
\begin{center}
\includegraphics[width=\textwidth]{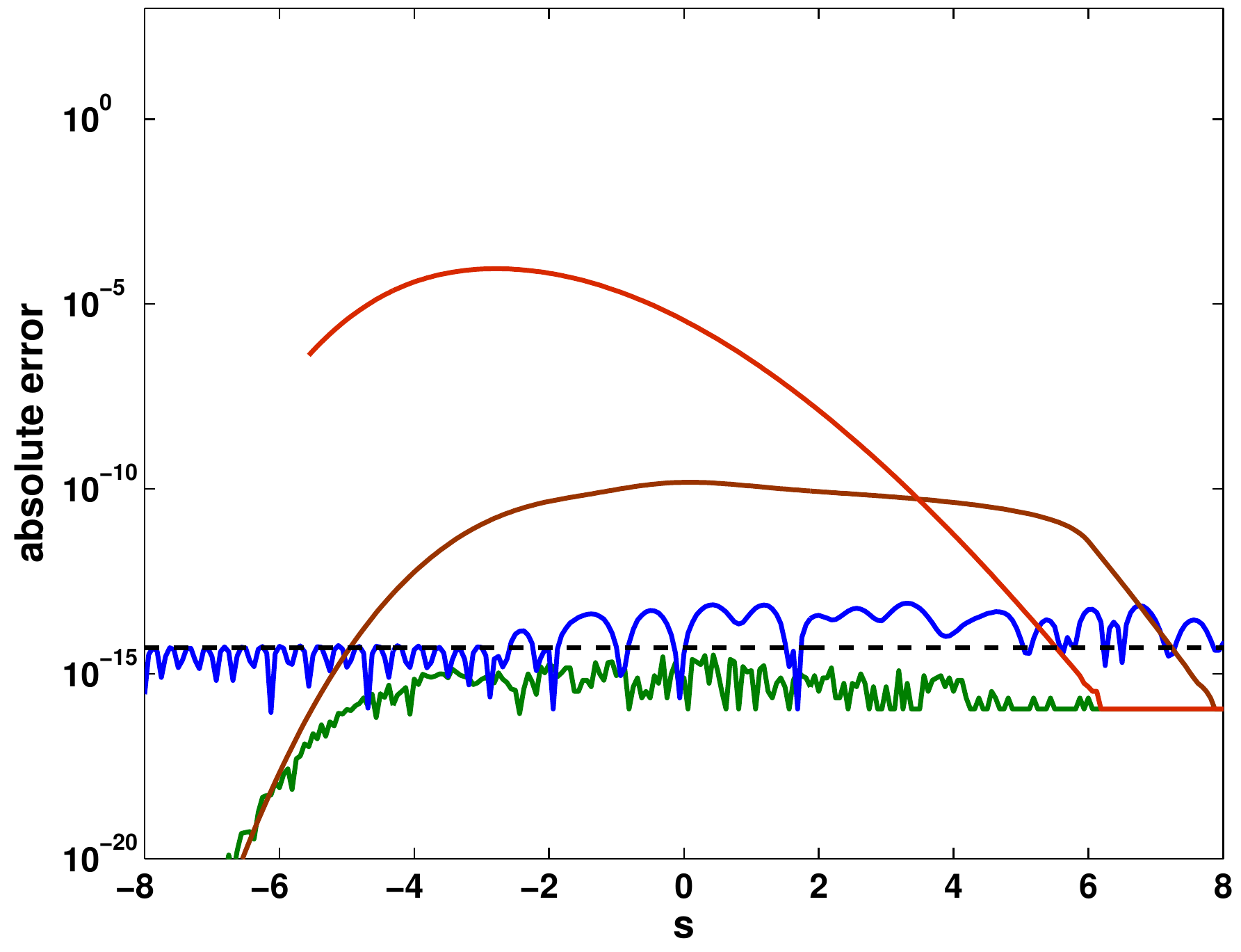}\\*[-1mm]
{\footnotesize a. \, error in evaluating $F_2(s)$}
\end{center}
\end{minipage}
\hfil
\begin{minipage}{0.49\textwidth}
\begin{center}
\includegraphics[width=\textwidth]{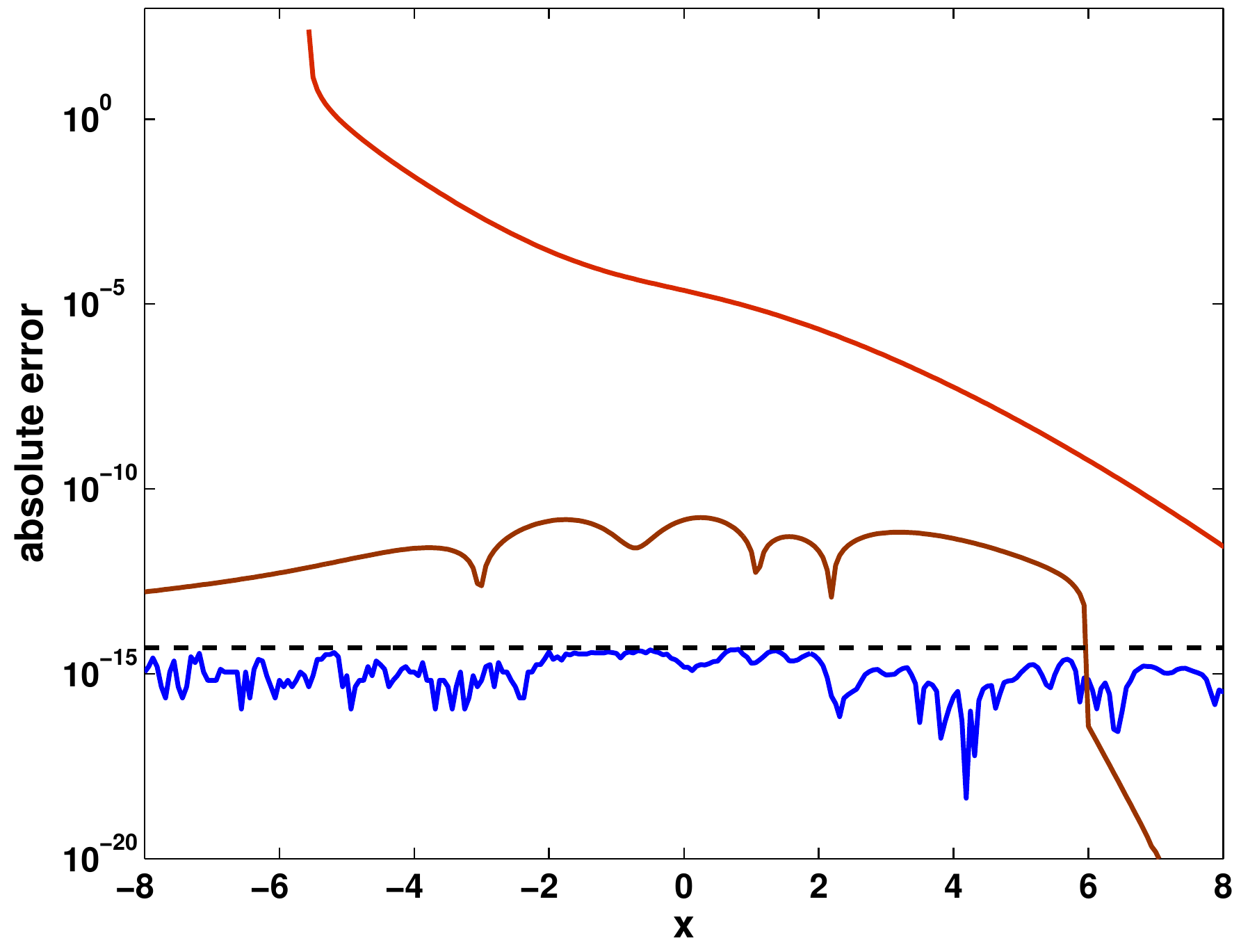}\\*[-1mm]
{\footnotesize b. \, error in evaluating $u(x)$ }
\end{center}
\end{minipage}
\end{center}\vspace*{-0.0625cm}
\caption{Absolute error in evaluating the Tracy--Widom distribution $F_2(s)$ and the Hastings--McLeod solution $u(x)$ of Painlevé~II
using different numerical methods; red: initial value solution \protect\citeaffixed{Edelman05}{Matlab's {\tt ode45} as in}, which breaks down at about $x=-5.56626$; brown:
boundary value solution \protect\citeaffixed{Dieng05}{Matlab's {\tt bvp4c} as in}, blue: boundary value solution by spectral collocation \protect\cite{MR2465699};
green: numerical evaluation of the Airy kernel Fredholm determinant \protect\cite{Bornemann1}, see also Section~\ref{sect:fredholm} (there is no $u(x)$ here). The dashed line shows the tolerance $5\cdot 10^{-15}$ used
in the error control of the last two methods. All calculations were done in IEEE double precision hardware arithmetic.}
\label{fig:F2Error}
\end{figure}

\begin{table}[tbp]
\caption{Maximum absolute error and run time of the methods in Figure~\protect\ref{fig:F2Error}. The calculation was done
for the 401 values of $F_2(s)$ from $s=-13$ to $s=12$ with step size $1/16$. The IVP solution is only available for the 282
values from $s=-5.5625$ to $s=12$. All calculations were done in hardware arithmetic.}
\vspace*{0mm}
\centerline{%
\setlength{\extrarowheight}{3pt}
\begin{tabular}{lllr}\hline
method & reference & max. error & run time\\*[0.5mm]\hline
IVP/Matlab's {\tt ode45} & \citeasnoun{Edelman05} & $9.0\cdot 10^{-5}$ & 11 sec\\
BVP/Matlab's {\tt bvp4c} & \citeasnoun{Dieng05} & $1.5\cdot 10^{-10}$ & 3.7 sec\\
BVP/spectral colloc.  & \citeasnoun{MR2465699} & $8.1\cdot 10^{-14}$ & 1.3 sec\\
Fredholm determinant     & \citeasnoun{Bornemann1} & $2.0\cdot 10^{-15}$ & 0.69 sec\\*[0.5mm]\hline
\end{tabular}}
\label{tab:comparison}
\end{table}

\subsubsection{Explaining the instability of the IVP}\label{sect:instability}

By reversibility of the differential equation, finite precision effects in evaluating the initial values at $x=b_-$ can be pulled back to a perturbation of the asymptotic condition
$u(x) \simeq \Ai(x)$ for $x\to \infty$.
That is, even an \emph{exact} integration of the ordinary differential equation would have to suffer from the result of this perturbation. Let us look at the
specific perturbation
\begin{equation}
u(x;\theta) \simeq \theta \cdot \Ai(x)\quad (x\to \infty)\qquad\text{with}\quad \theta = 1+\epsilon.
\end{equation}
The results are shown, for $\epsilon = \pm 10^{-8}$ and $\epsilon=\pm 10^{-16}$ (which is already below the resolution of hardware arithmetic), in Figure~\ref{fig:theta}
\citeaffixed[Figs.~11/12]{MR2243533}{see also}. Therefore, in hardware arithmetic, an error of order one in computing the Hastings--McLeod solution $u(x)=u(x;1)$ from the IVP is \emph{unavoidable}
already somewhere before $x\approx -12$.

\begin{figure}[tbp]
\begin{center}
\begin{minipage}{0.49\textwidth}
\begin{center}
\includegraphics[width=\textwidth]{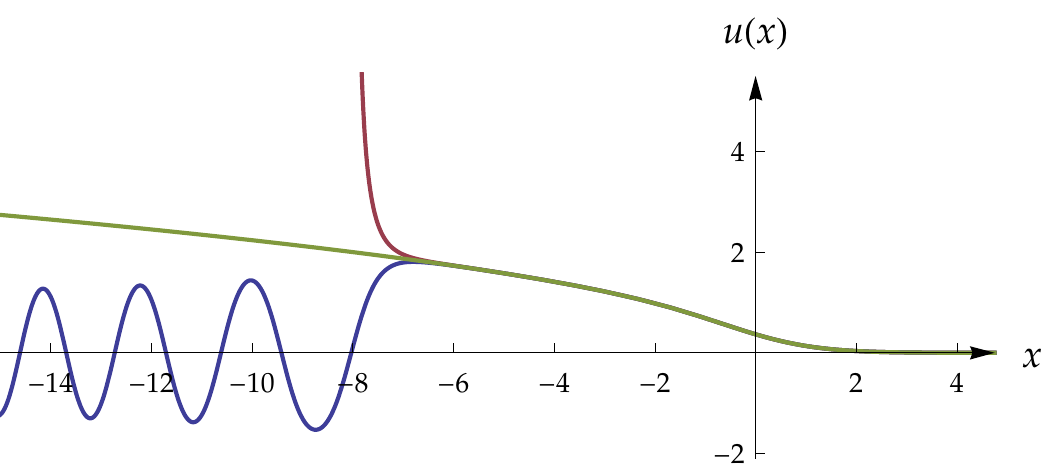}\\*[-1mm]
{\footnotesize a. \, $\epsilon=-10^{-8}$ (blue), $0$ (green), $10^{-8}$ (red)}
\end{center}
\end{minipage}
\hfil
\begin{minipage}{0.49\textwidth}
\begin{center}
\includegraphics[width=\textwidth]{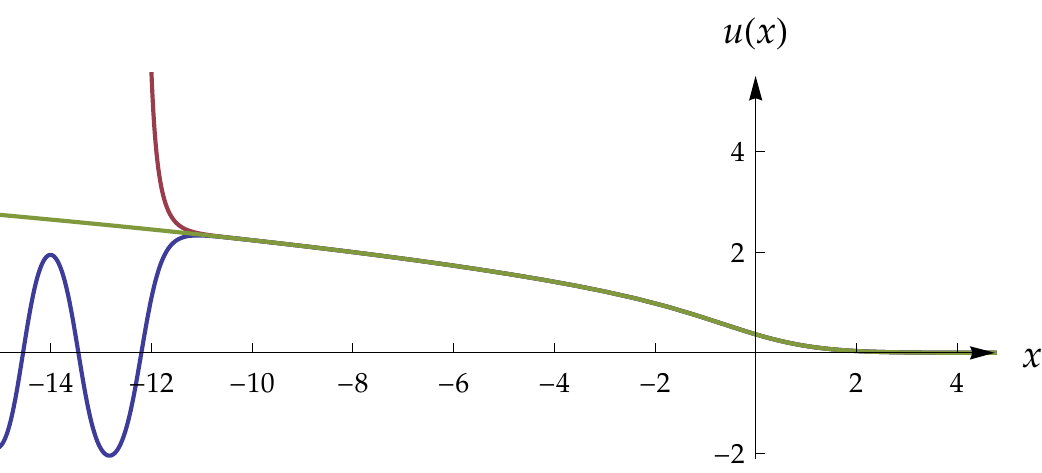}\\*[-1mm]
{\footnotesize b. \, $\epsilon=-10^{-16}$ (blue), $0$ (green), $10^{-16}$ (red)}
\end{center}
\end{minipage}
\end{center}\vspace*{-0.0625cm}
\caption{Sensitivity of Painlevé II with the asymptotic condition $u(x) \simeq (1+\epsilon)\, \Ai(x)$ ($x\to \infty$) for $\epsilon \approx 0$.
The calculation was done with variable precision software arithmetic.
Observe the dependence of the asymptotic behavior, for $x \to -\infty$, on the sign of $\epsilon$.}
\label{fig:theta}
\end{figure}

This sensitive behavior can be fully explained by the \emph{connection formulae} of Painlevé II on the real axis, see \citeasnoun[Thms.~9.1/2]{MR2243533} and \citeasnoun[Thms.~10.1/2]{MR2264522}. Namely, for the Painlevé II
equation (\ref{eq:PIIGE}), the given asymptotic behavior $u(x;\theta) \simeq \theta\cdot \Ai(x)$, $\theta >0$, as $x\to\infty$ implies
explicitly known asymptotic behavior ``in the direction of $x\to-\infty$'':
\begin{enumerate}
\item $0<\theta<1$:
\begin{multline}
\qquad\qquad u(x;\theta)
= d(\theta) |x|^{-1/4} \sin\left(\tfrac23 |x|^{3/2} -\tfrac34 d(\theta)^2 \log|x| - \phi(\theta)\right)\\ + O(|x|^{-7/10}) \qquad (x\to-\infty),
\end{multline}
with
\begin{subequations}
\begin{align}
d(\theta)^2 &= -\pi^{-1} \log(1-\theta^2) \\*[1mm]
\phi(\theta) &= \tfrac{3}{2} d(\theta)^2 \log(2) + \arg\Gamma(1-\tfrac{i}{2}d(\theta)^2)-\tfrac{\pi}{4};
\end{align}
\end{subequations}
\item $\theta = 1$:
\begin{equation}\label{eq:PIIconnect}
u(x;1) = \sqrt{-\frac{x}{2}} + O(x^{-5/2}) \qquad (x\to-\infty);
\end{equation}
\item $\theta > 1$: there is a pole at a finite $x_0(\theta) \in \R$, such that
\begin{equation}
u(x;\theta) \simeq \frac{1}{x-x_0(\theta)} \qquad (x\downarrow x_0(\theta)).
\end{equation}
\end{enumerate}
We observe that, for $x\to -\infty$, the asymptotic behavior of the Hastings--McLeod solution $u(x;1)$ \emph{separates} two completely different
regimes: an oscillatory ($\theta<1$) from a blow-up solution ($\theta>1$). The blow-up points $x_0(\theta)$ are close to the range of values of $x$ which are of interest
in the application to RMT, see Table~\ref{tab:blow-up}.

\subsubsection{Explaining the separation of asymptotic regimes}\label{sect:pole}

The deeper reason for this separation property comes from comparing (\ref{eq:D2PII}) with (\ref{eq:D2edge}), that is, from the equality
\begin{equation}\label{eq:dettheta}
\det(I- \theta^2 K_{\Ai}\projected{L^2(s,\infty)}) = \exp\left(-\int_s^\infty (x-s)u(x;\theta)^2\,dx\right).
\end{equation}
Here, $K_{\Ai}\projected{L^2(s,\infty)}$ is a positive self-adjoint trace class operator with spectral radius \citeaffixed{MR1257246}{see}
\begin{equation}
\rho(s) = \lambda_{\max}\left(K_{\Ai}\projected{L^2(s,\infty)}\right) <1.
\end{equation}
Obviously, there holds $\rho(s)\to 0$ for $s\to\infty$.
On the other hand, since for $\theta=1$ the determinant (\ref{eq:dettheta}) becomes the Tracy--Widom distribution $F_2(s)$ with $F_2(s)\to 0$ as $s\to -\infty$, we conclude that
$\rho(s)\to 1$ as $s\to -\infty$.

Now, we observe that the determinant (\ref{eq:dettheta}) becomes zero if and only if $u(x; \theta)$ blows up at the point $x=s$. By the Painlevé property,
such a singularity must be a pole.
By Lidskii's theorem \cite[Thm.~3.7]{MR2154153}, the determinant (\ref{eq:dettheta}) becomes zero if and only if $\theta^{-2}$ is an eigenvalue of
$K_{\Ai}\projected{L^2(s,\infty)}$. Therefore, a blow-up point of $u(x;\theta)$  at $x=s$ implies necessarily that
\begin{equation}
\theta \geq \rho(s)^{-1/2} > 1.
\end{equation}
On the other hand, if $\theta >1$ there must be, by continuity, a largest point $s=x_0(\theta)$ such that $\theta =  \rho(s)^{-1/2}$, which gives us the position of the pole of the
connection formula. This way, using the methodology of Section~\ref{sect:fredholm}, we have
computed the numbers shown in Table~\ref{tab:blow-up}.

\begin{table}[tbp]
\caption{Blow-up points $x_0(1+\epsilon)$ of $u(x;1+\epsilon)$ with $\epsilon>0$.}
\vspace*{0mm}
\centerline{%
\setlength{\extrarowheight}{3pt}
\begin{tabular}{lr}\hline
\;\;\;$\epsilon$ & $x_0(1+\epsilon)$\qquad\qquad\qquad \\*[0.5mm]\hline
$10^{-4}$  & $ -5.40049\,30292\,23929\cdots$ \\
$10^{-8}$  & $ -8.01133\,67804\,74602\cdots$ \\
$10^{-12}$ & $ -10.2158\,50522\,53541\cdots$\\
$10^{-16}$ & $ -12.1916\,56643\,75788\cdots$\\*[0.5mm]\hline
\end{tabular}}
\label{tab:blow-up}
\end{table}

A similar line of arguments shows that in the other cases (\ref{eq:PIVGE}), (\ref{eq:PVGE}), (\ref{eq:PVLE}), and (\ref{eq:PIIILE})
of a Painlevé representation given in Section~\ref{sect:stage}, the parameter $z=1$ (which is the most significant choice for an application to RMT)
also belongs to a connecting orbit $\sigma(x;1)$ that separates different
asymptotic regimes. In particular, we get poles at finite positions if and only if $z>1$. Hence, the numerical difficulties observed with the initial
value approach have to be expected in general.

\subsection{The Stable Approach: Solving the Boundary Value Problem}

The stable numerical solution of a connecting orbit separating different asymptotic regimes has to be addressed as a two-point \emph{boundary} value problem (BVP), see, e.g., \citeasnoun[Chap.~8]{MR1912409}. That is, we use the information from
a connection formula to infer the asymptotic for $x\to b$ from that of $x\to a$, or vice versa, and to approximate $u(x)$ by solving the BVP
\begin{equation}\label{eq:bvp}
v''(x) = f(x,v(x),v'(x)),\qquad v(a_+)=u_a(a_+),\quad v(b_-)=u_b(b_-).
\end{equation}
Thus, four particular choices have to be made: The values of the finite boundary points $a_+$ and $b_-$, and the truncation indices of the asymptotic expansions at $x\to a$ and $x\to b$ that
give the boundary functions $u_a(x)$ and $u_b(x)$. All this has to be balanced for the accuracy and efficiency of the final method.\footnote{An early variant of this connection-formula based approach can be traced back to the work of
\citeasnoun[App.~A]{McCoy76}: there, a Painlevé III representation of the spin-spin correlation function of the two-dimensional Ising model was evaluated by joining a forward integration of the IVP from $a_+$ to some
interior point $c \in (a_+,b_-)$ with a backward integration of the IVP from $b_-$ to $c$. The difference of the two IVP solutions at $c$ was used as a rough error estimate. Though not quite a BVP solution, it is close in spirit.
Actually, this was the  approach originally used by \citename{MR1257246} \citeyear{MR1215903,MR1257246} in their numerical evaluation of $F_2$ (personal communication by Craig Tracy).}

\subsubsection{An Example: the Tracy--Widom distribution $F_2(x)$}\label{sect:TW2}
Let us look, once more, at the Hastings--McLeod solution $u(x) = u(x;1)$ of (\ref{eq:PIIGE}) and the corresponding Tracy--Widom distribution (\ref{eq:TW}).
By definition, we have
\begin{equation}
u(x) \simeq \Ai(x) \qquad (x\to \infty).
\end{equation}
The asymptotic result for $x\to-\infty$ as given in the connection formula (\ref{eq:PIIconnect}) is not accurate enough to allow a sufficiently large point $a_+$ to be used.
However, using symbolic calculations it is straightforward to obtain, from this seed, the asymptotic expansion \cite[Eq.~(4.1)]{MR1257246}
\begin{multline}\label{eq:HMasympt}
u(x) = \sqrt{-\frac{x}{2}} \left(1+\frac{1}{8}x^{-3} - \frac{73}{128}x^{-6} + \frac{10657}{1024}x^{-9} -\frac{13912277}{32768}x^{-12} + O(x^{-15})\right)\\
 (x\to-\infty).
\end{multline}
\citeasnoun{Dieng05} chooses these terms as $u_a(x)$, as well as $a_+=-10$, $b_-=6$ and $u_b(x)=\Ai(x)$. Using Matlab's fixed-order collocation method
{\tt bvp4c}, he calculates  solutions whose errors are assessed in Figure~\ref{fig:F2Error} and Table~\ref{tab:comparison}. The accuracy is still somewhat limited
and he reports (p.~88) on difficulties in obtaining a starting iterate for the underlying nonlinear solver. A more promising and efficient approach to obtain near
machine precision is the use of spectral collocation methods. Because of analyticity, the convergence will be exponentially fast. This can be most elegantly expressed in
the newly developed {\tt chebop} system of \citeasnoun{MR2465699}, a Matlab extension for the automatic solution of differential
equations by spectral collocation. In fact, the evaluation of the Tracy--Widom distribution is Example~6.2 in that paper. Here,
 the first four terms of (\ref{eq:HMasympt}) are chosen as $u_a(x)$, as well as $a_+=-30$, $b_-=8$ and $u_b(x)=\Ai(x)$. The Newton iteration is started from a simple
affine function satisfying the boundary conditions; see Figure~\ref{fig:F2Error} and Table~\ref{tab:comparison} for a comparison of the accuracy and run time.

\subsection{A List of Connection Formulae}
For the sake of completeness we collect the connection formulae for the other Painlevé representations (\ref{eq:PIVGE}), (\ref{eq:PVGE}), (\ref{eq:PVLE}), and (\ref{eq:PIIILE}).
References are given to the place where we have found each formula; we did not try to locate the historically first source, whatsoever. Note that a rigorous derivation of a
connection formula relies on  deep and involved analytic arguments and calculations; a systematic approach is based on Riemann--Hilbert problems, see \citeasnoun{MR1677884}
and \citeasnoun{MR2264522} for worked out examples.

\begin{itemize}
\item The Painlevé III representation (\ref{eq:PIIILE}), for LUE with parameter $\alpha$ at the hard edge, satisfies \cite[Eq.~(3.1)]{MR1266485}
\begin{equation}
\sigma(x;1) = \frac{x}{4} - \frac{\alpha}{2}\sqrt{x} + O(1)\qquad (x\to\infty).
\end{equation}
\item The Painlevé IV representation (\ref{eq:PIVGE}), for $n$-dimensional GUE, satisfies \cite[Eq.~(5.17)]{MR1277933}
\begin{equation}
\sigma(x;1) = -2n x -n x^{-1} + O(x^{-3}) \qquad (x\to-\infty).
\end{equation}
It is mentioned there that $\sigma(x;z)$ has, for $z>1$, poles at finite positions. This is consistent with the line of arguments that we gave in Section~\ref{sect:pole}.
\item The Painlevé V representation (\ref{eq:PVGE}), for the bulk scaling limit of GUE, satisfies \cite[p.~6]{MR1173848}
\begin{equation}
\sigma(x;z) \simeq
\begin{cases}
\dfrac{x^2}{4} \quad &\quad z=1 \\*[2mm]
-\log(1-z)\dfrac{x}{\pi} \quad &\quad 0<z<1
\end{cases}
\qquad (x\to\infty)
\end{equation}
\item The Painlevé V representation (\ref{eq:PVLE}), for $n$-dimensional LUE with parameter $\alpha$, satisfies \cite[Eq.~(1.42)]{MR1885665}
\begin{equation}
\sigma(x;1) = n x- \alpha n + \alpha n^2 x^{-1} + O(x^{-2})\qquad (x\to\infty).
\end{equation}
\end{itemize}

\subsection{Summary} Let us summarize the  steps that are necessary for the numerical evaluation of a distribution function from RMT given by a Painlevé representation
on the interval $(a,b)$ (that is, the second order differential equation is given together with an asymptotic expansion of its solution at just \emph{one} of the endpoints $a$ or $b$):
\begin{enumerate}
\item Derive (or locate) the corresponding connection formula that gives the asymptotic expansion at the other end point. This requires
considerable analytic skills or, at least, a broad knowledge of the literature.
\item Choose $a_+>a$ and $b_-<b$ together with indices of truncation of the asymptotic expansions such that the expansions themselves are sufficiently accurate in $(a,a_+)$ and $(b_-,b)$ and
the two-point boundary value problem (\ref{eq:bvp}) can be solved efficiently. This balancing of parameters requires a considerable amount of experimentation to be successful.
\item The issues of solving the boundary value problem (\ref{eq:bvp}) have to be addressed: starting values for the Newton iteration, the discretization of the differential equation,
automatic step size control etc. This requires a considerable amount of experience in numerical analysis.
\end{enumerate}
Thus, much work has still to be done to make all this a ``black-box'' approach.

\section{Numerics of Fredholm Determinants and Their Derivatives}\label{sect:fredholm}

\subsection{The Basic Method}\label{sect:basic}

\citeasnoun{Bornemann1} has recently shown that there is an extremely simple, accurate, and general direct numerical method for evaluating Fredholm determinants.
By taking an $m$-point quadrature rule\footnote{We choose Clenshaw--Curtis quadrature, with a
suitable meromorphic transformation for (semi) infinite intervals \cite[Eq.~(7.5)]{Bornemann1}. For the use of Gauss--Jacobi quadrature see Section~\ref{sect:singularity}.}
of order\footnote{A quadrature rule is of order $m$ if it is \emph{exact} for polynomials of degree $m-1$.} $m$ with nodes $x_j\in(a,b)$ and {\em positive} weights $w_j$, written in the form
\begin{equation}\label{eq:quadrature}
\sum_{j=1}^m w_j f(x_j) \approx \int_a^b f(x)\,dx,
\end{equation}
the Fredholm determinant
\begin{equation}
d(z) = \det(I-z K\projected{L^2(a,b)})
\end{equation}
is simply approximated by the corresponding $m$-dimensional determinant
\begin{equation}\label{eq:detfin}
d_m(z) = \det\left(\delta_{ij} - z\, w_i^{1/2} K(x_i,x_j)w_j^{1/2} \right)_{i,j=1}^m.
\end{equation}
This algorithm can straightforwardly be implemented in a few lines. It just needs
to call  the kernel $K(x,y)$ for evaluation and has only one method parameter, the approximation dimension $m$.

If the kernel function $K(x, y)$ is analytic in a complex neighborhood of $(a,b)$, one can prove exponential convergence \cite[Thm.~6.2]{Bornemann1}:
there is a constant $\rho>1$ (depending on the domain of analyticity of $K$) such that
\begin{equation}\label{eq:expconvdet}
d_m(z)-d(z) = O(\rho^{-m})\qquad (m\to\infty),
\end{equation}
locally uniform in $z \in \C$. (Note that $d(z)$ is an {\em entire} function and $d_m(z)$ a polynomial.)
This means, in practice, that doubling $m$ will double the number of correct digits; machine precision of about 16 digits is
then typically obtained for a relatively small dimension $m$ between 10 and 100. This way the evaluation of the
Tracy--Widom distribution $F_2(s)$, at a given argument $s$, takes just a few milliseconds; see Figure~\ref{fig:F2Error} and Table~\ref{tab:comparison}
for a comparison of the accuracy and run time with the evaluation of the Painlevé representation.

\subsection{Numerical Evaluation of Finite-Dimensional Determinants}\label{sect:finitedet}
Let us write
\begin{equation}
d_m(z) = \det(I-z A_m),\qquad A_m \in \R^{m\times m},
\end{equation}
for the finite-dimensional determinant (\ref{eq:detfin}). Depending on whether we need its value for just one $z$ (typically $z=1$ in the context of RMT)
or for several values of $z$ (such as for the calculation of derivatives), we actually proceed as follows:
\begin{enumerate}
\item The \emph{value} $d_m(z)$ at a given point $z\in \C$ is calculated from the $LU$ decomposition of the matrix $I-zA_m$ (with partial pivoting). Modulo the proper sign (obtained from
the pivoting sequence), the value is given by the product of the diagonal entries of $U$ \cite[p.~176]{MR1653546}.
The computational cost is of  order $O(m^3)$, including the cost for obtaining the weights and nodes of
the quadrature method, see \citeasnoun[Footnote~5]{Bornemann1}.
\smallskip
\item The polynomial function $d_m(z)$ itself is represented by
\begin{equation}
d_m(z) = \prod_{j=1}^m (1-z\lambda_j(A_m)).
\end{equation}
Here, we first calculate the eigenvalues $\lambda_j(A_m)$ (which is slightly more expensive than the $LU$ decomposition, although the computational cost of, e.g., the $QR$ algorithm is of order $O(m^3)$, too).
The subsequent evaluation
of $d_m(z)$ costs just $O(m)$ operations for each point~$z$ that we care to address.
\end{enumerate}

\subsection{Numerical Evaluation of Higher Derivatives}\label{sect:highder}

The numerical evaluation of expressions such as (\ref{eq:D2nDet}) requires the computation of derivatives of the determinant $d(z)$ with respect to $z$.
We observe that, by well known results from complex analysis, these derivatives enjoy the same kind of convergence as in (\ref{eq:expconvdet}),
\begin{equation}
d_m^{(k)}(z)-d^{(k)}(z) = O(\rho^{-m})\qquad (m\to\infty),
\end{equation}
locally uniform in $z \in \C$, with an arbitrary but fixed $k=0,1,2,\ldots$

The numerical evaluation of higher derivatives is, in general, a badly conditioned problem.
However, for \emph{entire} functions $f$ such as our determinants we can make use of the Cauchy integrals\footnote{For more general analytic $f$ one would have to bound
the size of the radius $r$ to not leave the domain of analyticity. In particular, when evaluating (\ref{eq:GSEMatrixKernelDet}) we have to take care of the condition $r < \min |(1-\lambda)/\lambda|$, where
$\lambda$ runs through the eigenvalues of the matrix kernel operator.}
\begin{equation}
f^{(k)}(z) = \frac{k!}{2\pi r^k} \int_0^{2\pi} e^{-i k \theta}f(z+r e^{i\theta})\,d\theta\qquad (r>0).
\end{equation}
Since the integrand is analytic and \emph{periodic}, the simple trapezoidal rule is exponentially convergent \cite[§4.6.5]{MR760629}; that is, once again, $p$ quadrature points
give an error  $O(\rho^{-p})$ for some constant $\rho>1$.

Theoretically, all radii $r>0$ are equivalent. Numerically, one has to be very careful in choosing a proper radius $r$ \citeaffixed{D}{for a detailed study see}. The quantity of interest in controlling this choice
is the condition
number of the integral, that is, the ratio
\begin{equation}
\kappa = \left|\int_0^{2\pi} e^{-i k \theta}f(z+r e^{i\theta})\,d\theta\right| / \int_0^{2\pi} | f(z+r e^{i\theta})|\,d\theta \,.
\end{equation}
For reasons of numerical stability, we should choose $r$ such that $\kappa \approx 1$. Some experimentation has led us to the choices $r=1$ for the bulk and $r=0.1$ for the edge
scaling limits \citeaffixed{D}{but see also Example 12.3 in}.

\subsection{Error Control}\label{sect:error}
Exponentially convergent sequences allow us to control the error of approximation in a very simple fashion.
Let us consider a sequence $d_m \to d$ with the convergence estimate
\begin{equation}
d_m - d = O(\rho^{-m})
\end{equation}
for some constant $\rho > 1$. If the estimate is sharp, it implies the quadratic convergence of the contracted sequence $d_{2^q}$, namely
\begin{equation}
|d_{2^{q+1}} - d| \leq c |d_{2^q} - d|^2
\end{equation}
for some $c>0$. A simple application of the triangle inequality gives then
\begin{equation}\label{eq:errorestimate}
|d_{2^q} - d| \leq \frac{|d_{2^q}-d_{2^{q+1}}|}{1-c|d_{2^q}-d|} \simeq |d_{2^q}-d_{2^{q+1}}|\qquad (q\to\infty).
\end{equation}
Thus we take $|d_{2^q}-d_{2^{q+1}}|$ as an excellent error estimate of $|d_{2^q} - d|$ and as a quite ``conservative'' but absolutely
reliable estimate of $|d_{2^{q+1}}-d|$. Table~\ref{tab:errorcontrol} exemplifies this strategy for the calculation of the value $F_2(-2)$.

\begin{table}[tbp]
\caption{Approximation of the Airy kernel determinant $F_2(-2)=\det(I-K_{\Ai}\projected{L^2(-2,\infty)})$ by (\protect\ref{eq:detfin}),
using $m$-point Clenshaw--Curtis quadrature meromorphically transformed to the interval $(-2,\infty)$, see \protect\citeasnoun[Eq.~(7.5)]{Bornemann1}.
Observe the apparent quadratic convergence: the number of correct digits doubles from step to step. Thus, in exact arithmetic,
the value for $m=64$ would be correct to about 20 digits; here, the error saturates at the level of machine precision ($2.22\cdot 10^{-16}$): all 15 digits shown for the $m=64$ approximation
are correct.}
\vspace*{0mm}
\centerline{%
\setlength{\extrarowheight}{3pt}
\begin{tabular}{rcll}\hline
$m$ & $d_m$ &  $|d_m-F_2(-2)|$ & error estimate (\ref{eq:errorestimate})\\*[0.5mm]\hline
$8$ &  $0.38643\,72955\,15158$ & $2.67868\cdot 10^{-2}$ & \quad $2.67817\cdot 10^{-2}$\\
$16$ & $0.41321\,90011\,46910$ & $5.14136\cdot 10^{-6}$ & \quad$5.14138\cdot 10^{-6}$\\
$32$ & $0.41322\,41425\,27728$ & $2.26050\cdot 10^{-11}$ &\quad$2.26046\cdot 10^{-11}$\\
$64$ & $0.41322\,41425\,05123$ & $4.44089\cdot 10^{-16}$ & \quad\quad\quad--- \\*[0.5mm]\hline
\end{tabular}}
\label{tab:errorcontrol}
\end{table}

\subsection{Numerical Evaluation of Densities}\label{sect:dens}

The numerical evaluation of the probability densities belonging to the cumulative distribution functions $F(s)$ given by a determinantal expression
requires a low order differentiation with respect to the {\em real-valued} variable $s$ (which cannot easily be extended {\em numerically}
into the complex domain). Nevertheless these functions are typically real-analytic and therefore amenable to an excellent approximation by interpolation
in Chebyshev points. To be specific, if $F(s)$ is given on the finite interval $[a,b]$ and $s_0,\ldots,s_m$ denote the Chebyshev points of that interval, the polynomial interpolant $p_m(s)$ of degree $m$ is given by \possessivecite{MR0315865} barycentric formula
\begin{equation}\label{eq:bary}
p_m(s) = \frac{{\displaystyle\sum\nolimits}_{k=0}^{\prime\prime m} (-1)^k F(s_k)/(s-s_k)}%
{{\displaystyle\sum\nolimits}_{k=0}^{\prime\prime m}  (-1)^k/(s-s_k)},
\end{equation}
where the double primes denote trapezoidal sums, i.e., the first and last term of the sums get a weight $1/2$. This formula enjoys
perfect numerical stability \cite{MR2094569}. If $F$ is real analytic, we have exponential convergence once more, that is
\begin{equation}
\|F - p_m\|_\infty = O(\rho^{-m})\qquad(m\to \infty)
\end{equation}
for some constant $\rho>1$ \citeaffixed{MR2115059}{see}. Low order derivatives (such as densities) and integrals (such as moments) can easily be calculated from
this interpolant. All that is most conveniently implemented in \possessivecite{MR2087334} {\tt chebfun} package for Matlab \citeaffixed{MR2465699}{see also}.

\subsection{Examples}

We illustrate the method with three examples. More about the software that we have written can be found in Section~\ref{sect:software}.

\subsubsection{Distribution of smallest and largest level in a specific LUE}
We evaluate the cumulative distribution functions (CDF) and probability density functions (PDF) of the smallest and largest eigenvalue of
the $n$-dimensional LUE with parameter $\alpha$ for the specific choices $n=80$ and $\alpha=40$.
(Note that for parameters of this size the numerical evaluation of the Painlevé representation (\ref{eq:PVLE}) becomes extremely challenging.) Specifically, we
evaluate the CDFs (the PDFs are their derivatives)
\begin{equation}\label{eq:G}
\prob(\lambda_{\min} \leq s) = 1 - E^{(n)}_{\text{LUE}}(0;(0,s),\alpha)
\end{equation}
and
\begin{multline}\label{eq:F}
\prob(\lambda_{\max} \leq 4n+2\alpha+2+2(2n)^{1/3}s) \\*[1mm]
= E^{(n)}_{\text{LUE}}(0;(4n+2\alpha+2+2(2n)^{1/3}s,\infty),\alpha).
\end{multline}
Additionally, we calculate the CDFs of the scaling limits; that is,
\begin{equation}\label{eq:tildeG}
1 - E^{(\text{hard})}_2(0;(0,4n s),\alpha)
\end{equation}
at the hard edge and the Tracy--Widom distribution
\begin{equation}\label{eq:F2new}
F_2(s) = E^{(\text{soft})}_2(0;(s,\infty))
\end{equation}
at the soft edge. All that has to be done to apply our method is to simply code the Laguerre, Bessel, and Airy kernels. Figure~\ref{fig:LUE} visualizes the functions
and Table~\ref{tab:LUE} shows their moments to  10 correct decimal places.

\begin{figure}[tbp]
\begin{center}
\begin{minipage}{0.49\textwidth}
\begin{center}
\includegraphics[width=\textwidth]{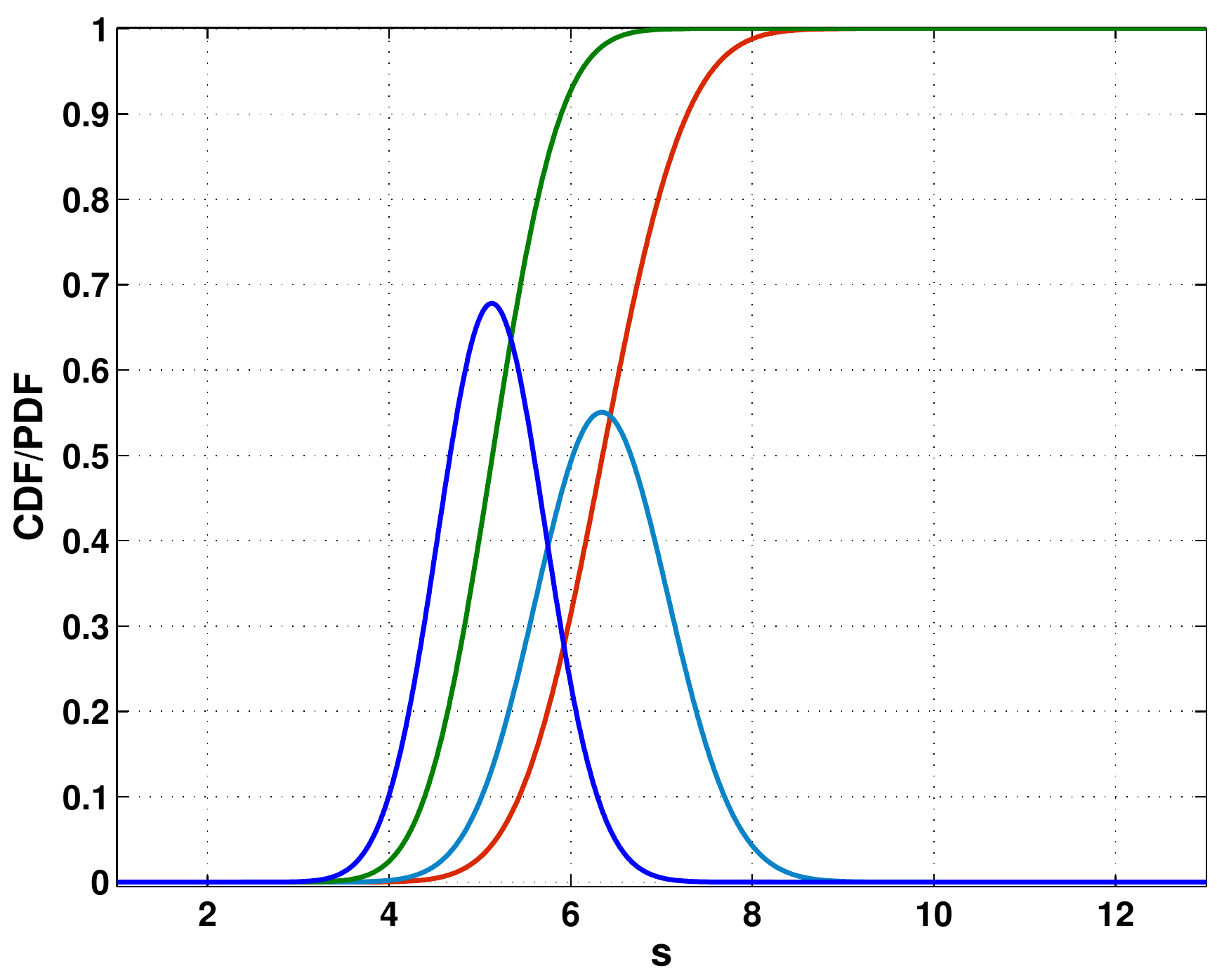}\\*[-1mm]
{\footnotesize a. \, smallest eigenvalue of LUE ($n=80$, $\alpha=40$)}
\end{center}
\end{minipage}
\hfil
\begin{minipage}{0.49\textwidth}
\begin{center}
\includegraphics[width=\textwidth]{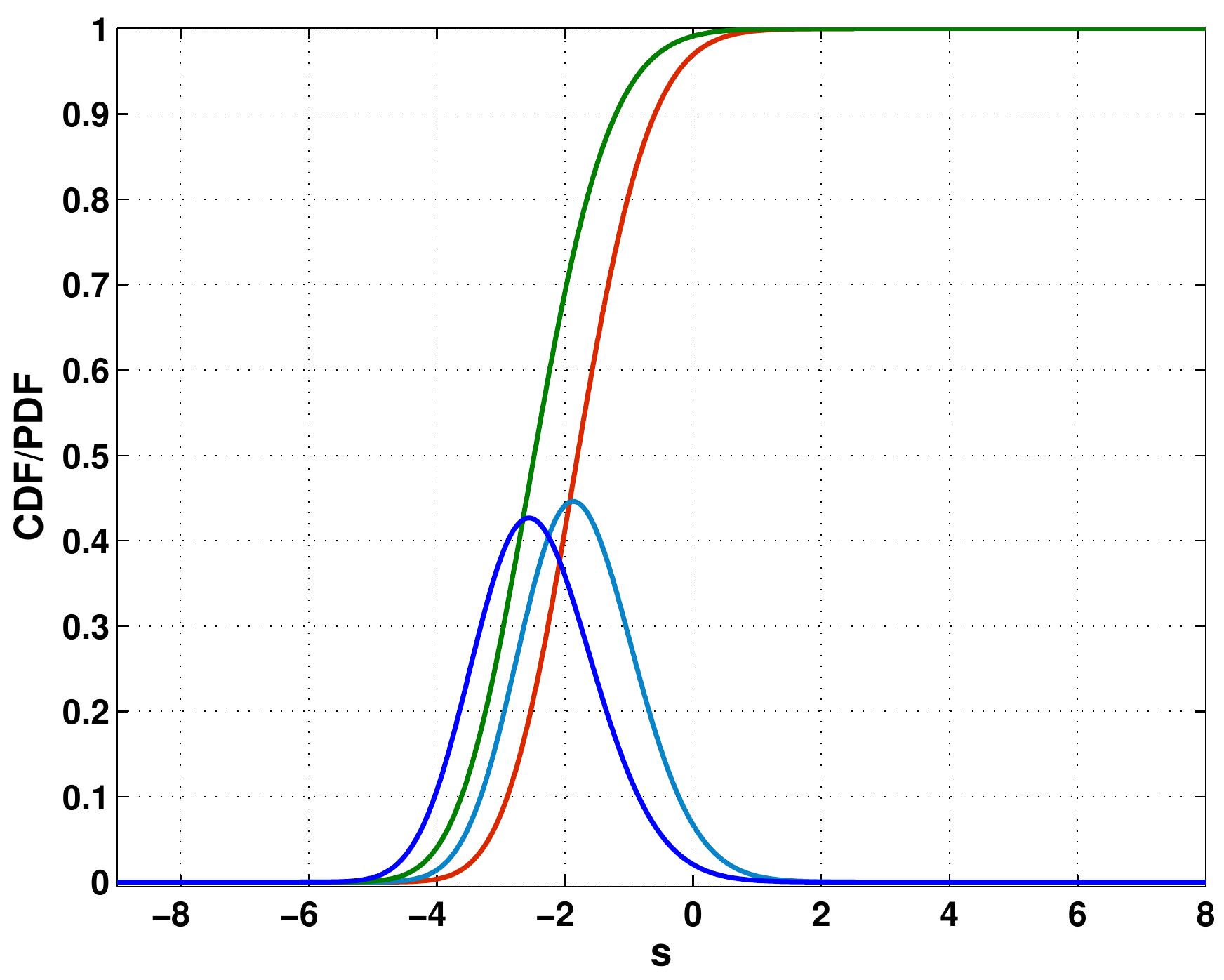}\\*[-1mm]
{\footnotesize b. \, largest eigenvalue of LUE ($n=80$, $\alpha=40$)}
\end{center}
\end{minipage}
\end{center}\vspace*{-0.0625cm}
\caption{CDF (green) and PDF (dark blue) of the smallest and largest eigenvalue for $n$-dimensional LUE with parameter $\alpha$ with $n=80$, $\alpha=40$.
Also shown are the scaling limits at the hard and soft edge (CDF in red, PDF in light blue).}
\label{fig:LUE}
\end{figure}

\begin{table}[tbp]
\caption{Moments of the distributions (\protect\ref{eq:G}), (\protect\ref{eq:F}), (\protect\ref{eq:tildeG}), and (\protect\ref{eq:F2new}) for LUE with $n=80$ and $\alpha=40$.
We show ten correctly \emph{truncated} digits (that passed the error control) and give the computing time. All calculations were done in hardware arithmetic.}
\vspace*{0mm}
\centerline{%
\setlength{\extrarowheight}{3pt}
\begin{tabular}{crrrrr}\hline
CFD & mean\phantom{xxx} &  variance\phantom{xx} & skewness\phantom{xx} & kurtosis\phantom{xx} & time\phantom{x}\\*[0.5mm]\hline
(\protect\ref{eq:G})       & $ 5.14156\,81318$ & $0.34347\,52478$ & $0.04313\,30951$ & $-0.02925\,63564$ & $1.0$ sec\\
(\protect\ref{eq:tildeG})  & $ 6.35586\,98372$ & $0.52106\,15307$ & $0.04102\,67718$ & $-0.02943\,22640$ & $1.2$ sec\\
(\protect\ref{eq:F})       & $-2.43913\,84563$ & $0.89341\,23428$ & $0.26271\,64962$ & $ 0.12783\,51672$ & $4.1$ sec\\
(\protect\ref{eq:F2new})   & $-1.77108\,68074$ & $0.81319\,47928$ & $0.22408\,42036$ & $ 0.09344\,80876$ & $1.0$ sec\\*[0.5mm]\hline
\end{tabular}}
\label{tab:LUE}
\end{table}

\subsubsection{The distribution of k-level spacings in the bulk of GUE}

By (\ref{eq:Ebeta}) and (\ref{eq:E2bulk}), the $k$-level spacing functions in the bulk scaling limit of GUE are given by the determinantal expressions
\begin{equation}\label{eq:E2k}
E_2(k;s) = \left. \frac{(-1)^k}{k!} \frac{d^k}{dz^k} \det\left(I-z\,K_{\sin}\projected{L^2(0,s)}\right) \right|_{z=1}.
\end{equation}
\citeasnoun{MR0348823} evaluated them using Gaudin's method (which be briefly described in Section~\ref{sect:gaudin}); a plot of these functions,
for $k$ from 0 up to 14, can also be found in \citeasnoun[Fig.~6.4]{MR2129906}. Now, the numerical evaluation of the expression (\ref{eq:E2k})
is directly amenable to our approach. The results of our calculations are shown in Figure~\ref{fig:GUEexamples}.a.

\begin{figure}[tbp]
\begin{center}
\begin{minipage}{0.49\textwidth}
\begin{center}
\includegraphics[width=\textwidth]{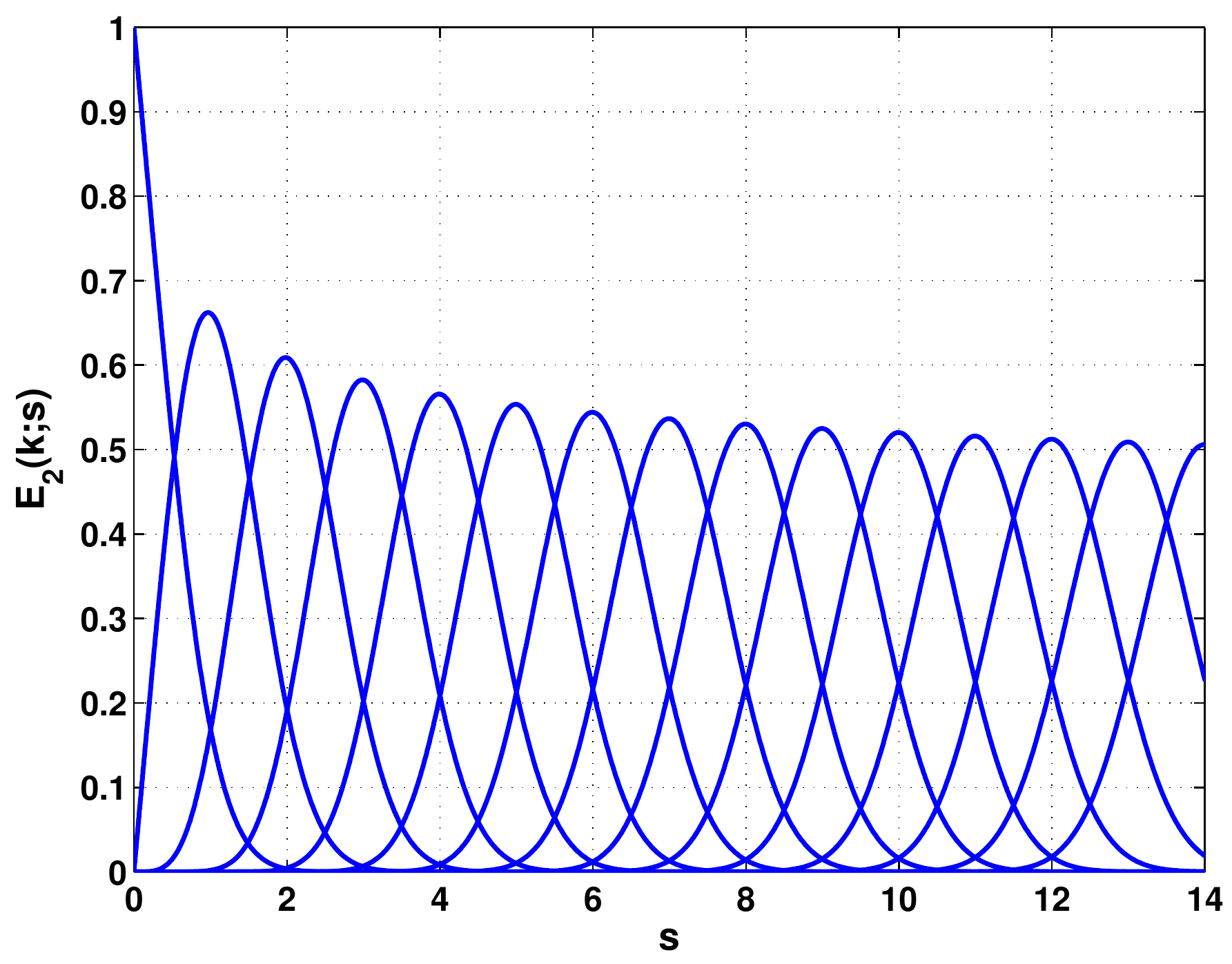}\\*[-1mm]
{\footnotesize a. \, $k$-level spacing functions $E_2(k;s)$ of GUE}
\end{center}
\end{minipage}
\hfil
\begin{minipage}{0.49\textwidth}
\begin{center}
\includegraphics[width=\textwidth]{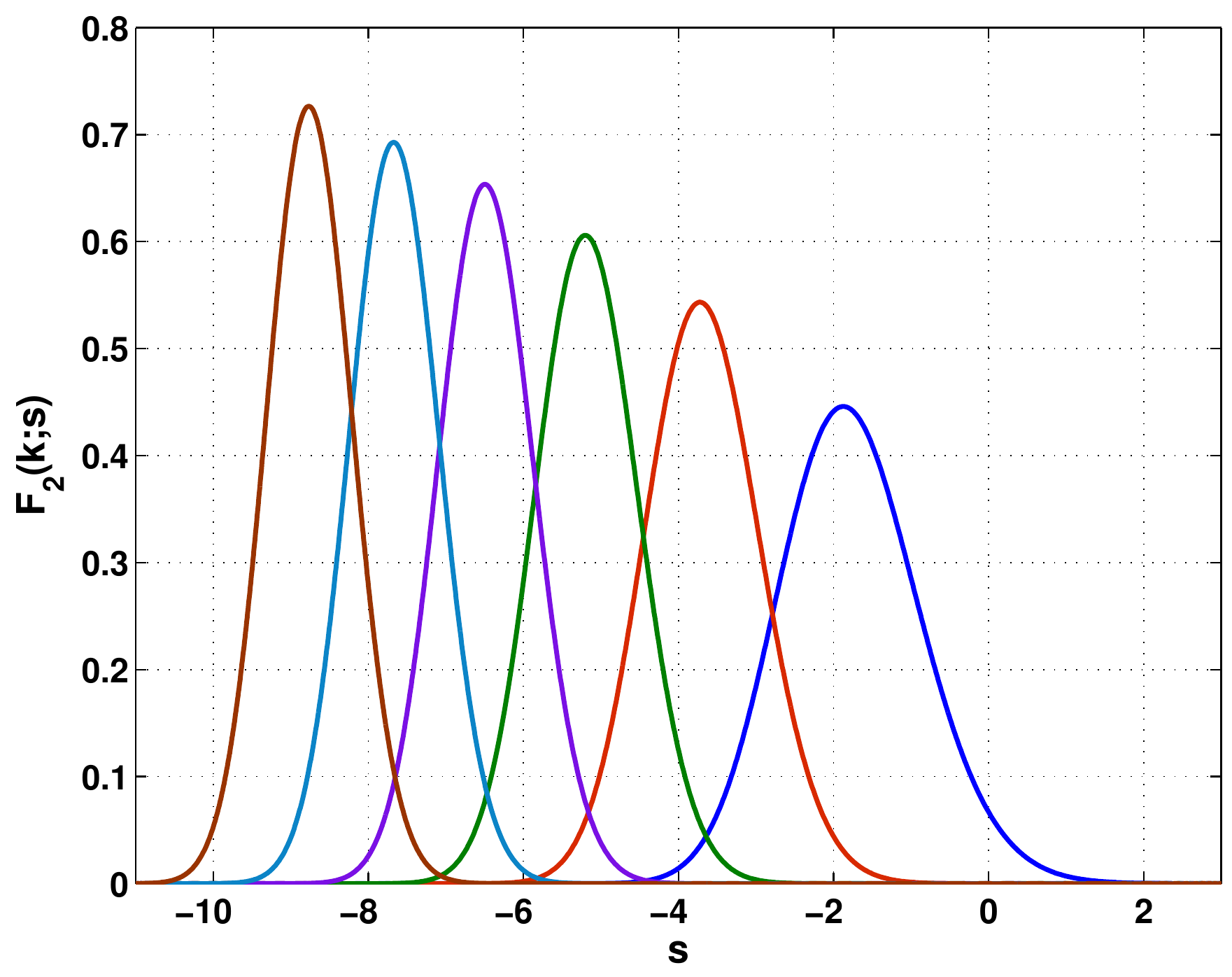}\\*[-1mm]
{\footnotesize b. \, density $\partial_s F_2(k;s)$ of $k$-th largest level in GUE}
\end{center}
\end{minipage}
\end{center}\vspace*{-0.0625cm}
\caption{Plots of the $k$-level spacing functions $E_2(k;s)$ in the bulk scaling limit of GUE ($k=0,\ldots,14$; larger $k$ go to the right),
and of the probability density functions $\partial_s F_2(k;s)$ of the $k$-th largest level in the soft edge scaling limit of GUE ($k=1,\ldots,6$; larger $k$ go to the left). The
underlying calculations were all done in hardware arithmetic and are accurate to an absolute error of about $5\cdot 10^{-15}$. Each of the two plots
took a run time of about 30 seconds. Compare with \protect\citeasnoun[Fig.~6.4]{MR2129906} and \protect\citeasnoun[Fig.~2]{MR1257246}.
}
\label{fig:GUEexamples}
\end{figure}

\subsubsection{The distributions of the k-th largest level at the soft edge of GUE}
By (\ref{eq:Fbeta}) and (\ref{eq:E2Edge}), the cumulative distribution functions of the $k$-th largest level in the soft edge scaling limit of GUE are given by the determinantal expressions
\begin{equation}\label{eq:F2k}
F_2(k;s) = \sum_{j=0}^{k-1} \left. \frac{(-1)^j}{j!} \frac{d^j}{dz^j} \det\left(I-z\,K_{\Ai}\projected{L^2(s,\infty)}\right) \right|_{z=1},
\end{equation}
which is directly amenable to be evaluated by our numerical approach. The results of our calculations are shown in Figure~\ref{fig:GUEexamples}.b.

\subsubsection*{Remark}
To our knowledge, prior to this work, only calculations of the particular cases $k=1$ (the largest level) and $k=2$ (the next-largest level) have been reported in the literature \cite{MR1791893,Dieng05}.
These calculations were based on the representation (\ref{eq:D2PII}) of the determinant in terms of  the Painlevé II equation~(\ref{eq:PIIGE}). The evaluation of
$F_2(s)=F_2(1;s)$ was obtained from the Hastings--McLeod solution $u(x)=u(x;1)$, see Section~\ref{sect:TW2}. On the other hand, the evaluation
of $F_2(2;s)$ required the function
\begin{equation}
w(x) = \partial_z u(x;z)|_{z=1},
\end{equation}
which, by differentiating (\ref{eq:PIIGE}), is easily seen to satisfy the linear ordinary differential equation
\begin{equation}
w''(x) = (6u(x)^2+x)w(x),\qquad w(x)\simeq \tfrac12 \Ai(x)\quad(x\to\infty).
\end{equation}
Obtaining the analogue of the connection formula (\ref{eq:HMasympt}) requires some work (though, since the underlying differential equation is linear, it poses no fundamental difficulty)
and one gets (see \citeasnoun[p.~164]{MR1257246} who also give expansions for larger $k$)
\begin{multline}
w(x) =\frac{e^{2\sqrt2 (-x)^{3/2}/3}}{2^{7/4} \sqrt{\pi}(-x)^{1/4}}\left(1 + \frac{17}{48\sqrt 2}(-x)^{-3/2} -\frac{1513}{9216}x^{-3} + O((-x)^{-9/2})\right)\\*[1mm]
\quad(x\to-\infty).
\end{multline}
Note that the exponential growth points, once more, to the instability we have discussed in Section~\ref{sect:instability}.

\section{The Distribution of k-Level Spacings in the Bulk: GOE and GSE}\label{sect:bulk}
\citeasnoun[Chap.~20]{MR2129906} gives determinantal formulae for the $k$-level spacing functions $E_\beta(k;s)$ in the bulk scaling limit that are
(also in the cases $\beta=1$ and $\beta=4$ of the GOE and GSE, respectively) directly
amenable to the numerical approach of Section~\ref{sect:fredholm}. These formulae are based on a factorization of the sine kernel determinant (\ref{eq:D2bulk}), which we describe first.

Since $K_{\sin}$ is a convolution operator we have the shift invariance
\begin{equation}
\det\left(I-z K_{\sin}\projected{L^2(0,2t)}\right) = \det\left(I-z K_{\sin}\projected{L^2(-t,t)}\right).
\end{equation}
Next, there is the orthogonal decomposition $L^2(-t,t)= X_t^{\text{even}} \oplus X_t^{\text{odd}}$ into the even and odd functions. On the level of operators, this corresponds to the
block diagonalization
\begin{equation}
K_{\sin}\projected{L^2(-t,t)} =
\begin{pmatrix}
K_{\sin}^+ & \\*[2mm]
 & K_{\sin}^-
\end{pmatrix}\projected{X_t^{\text{even}} \oplus X_t^{\text{odd}}}
\end{equation}
with the kernels
\begin{equation}
K^\pm_{\sin} (x,y) = \tfrac12(K_{\sin}(x,y) \pm K_{\sin}(x,-y)).
\end{equation}
Further, there is obviously
\begin{equation}
K_{\sin}^+\projected{L^2(-t,t)} =
\begin{pmatrix}
K_{\sin}^+ & \\*[2mm]
 & 0
\end{pmatrix}\projected{X_t^{\text{even}} \oplus X_t^{\text{odd}}},\qquad
K_{\sin}^-\projected{L^2(-t,t)} =
\begin{pmatrix}
 0& \\*[2mm]
 & K_{\sin}^-
\end{pmatrix}\projected{X_t^{\text{even}} \oplus X_t^{\text{odd}}}.
\end{equation}
Hence, we get the factorization
\begin{equation}\label{eq:bulkfact}
\det\left(I-z K_{\sin}\projected{L^2(-t,t)}\right) = \det\left(I-z K_{\sin}^+\projected{L^2(-t,t)}\right)  \det\left(I-z K_{\sin}^-\projected{L^2(-t,t)}\right).
\end{equation}
Now, upon introducing the functions
\begin{equation}\label{eq:Epm}
E_\pm(k;s) = \left. \frac{(-1)^k}{k!} \frac{d^k}{dz^k} \det\left(I-z\,K_{\sin}^\pm\projected{L^2(-s/2,s/2)}\right) \right|_{z=1}
\end{equation}
we obtain, using the factorization (\ref{eq:bulkfact}) and the Leibniz formula applied to (\ref{eq:E2k}), the representation
\begin{equation}\label{eq:E2sum}
E_2(k;s) = \sum_{j=0}^k E_+(j;s) E_-(k-j;s)
\end{equation}
of the $k$-level spacing functions in the bulk of GUE. The important point here is that \citeasnoun[Eqs.~(20.1.20/21)]{MR2129906} succeeded
in representing  the $k$-level spacing functions of GOE and GSE in terms of the functions $E_\pm$, too:
\begin{subequations}\label{eq:E1}
\begin{align}
E_1(0;s) &= E_+(0;s), \label{eq:E10}\\
E_1(2k-1;s) &= E_-(k-1;s) - E_1(2k-2;s),\\
E_1(2k;s) &= E_+(k;s) - E_1(2k-1;s),
\end{align}
\end{subequations}
($k=1,2,3,\ldots$) for GOE and
\begin{equation}\label{eq:E4}
E_4(k;s) = \tfrac12(E_+(k;2s)+E_-(k;2s))
\end{equation}
($k=0,2,3,\ldots$) for GSE. Based on these formulae, we have used the numerical methods of Section~\ref{sect:fredholm} to reproduce the plots of \citeasnoun[Figs.~7.3/11.1]{MR2129906}.
The results of our calculations are shown in Figure~\ref{fig:GOEGSEbulk}.

\begin{figure}[tbp]
\begin{center}
\begin{minipage}{0.49\textwidth}
\begin{center}
\includegraphics[width=\textwidth]{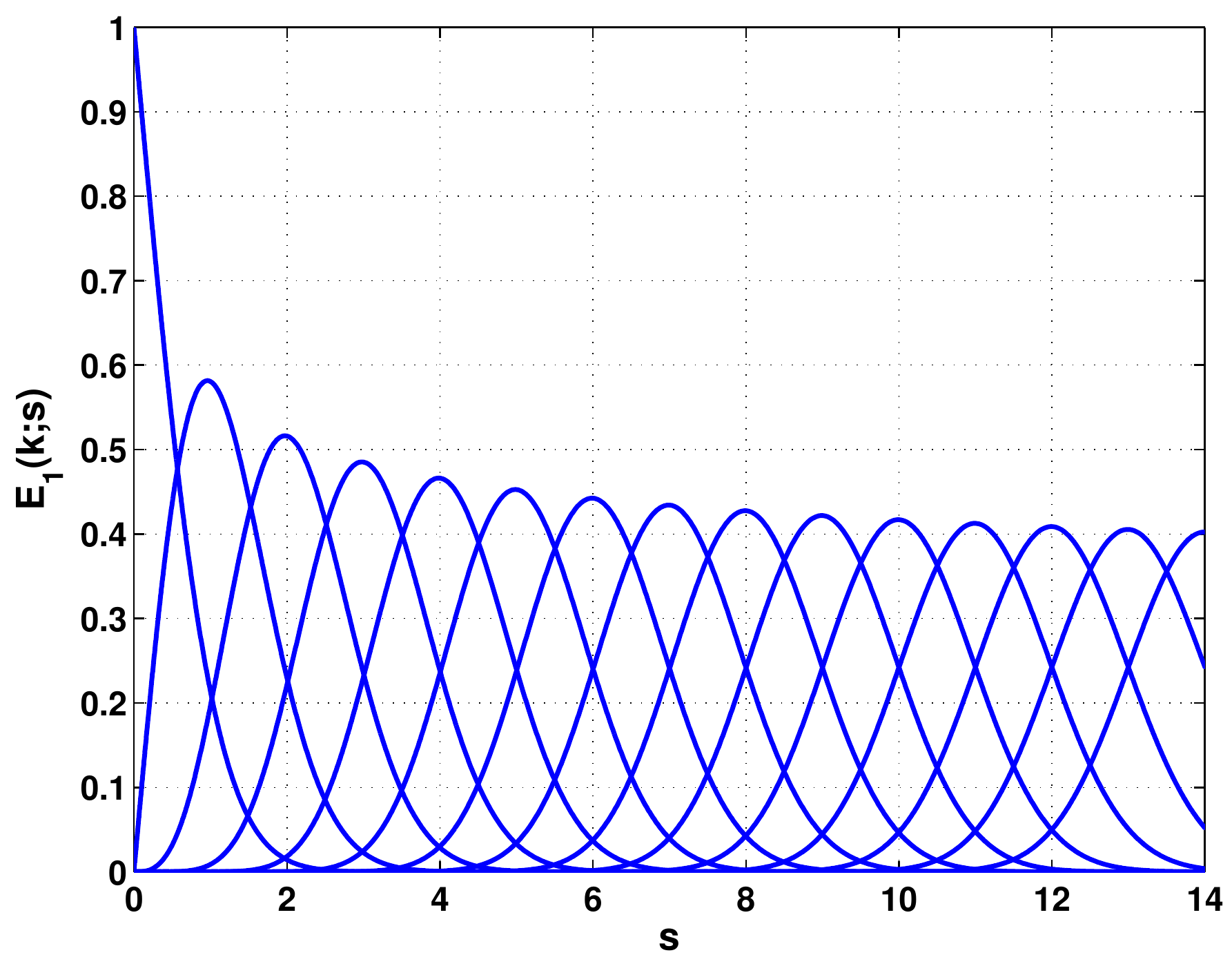}\\*[-1mm]
{\footnotesize a. \, $k$-level spacing functions $E_1(k;s)$ of GOE}
\end{center}
\end{minipage}
\hfil
\begin{minipage}{0.49\textwidth}
\begin{center}
\includegraphics[width=\textwidth]{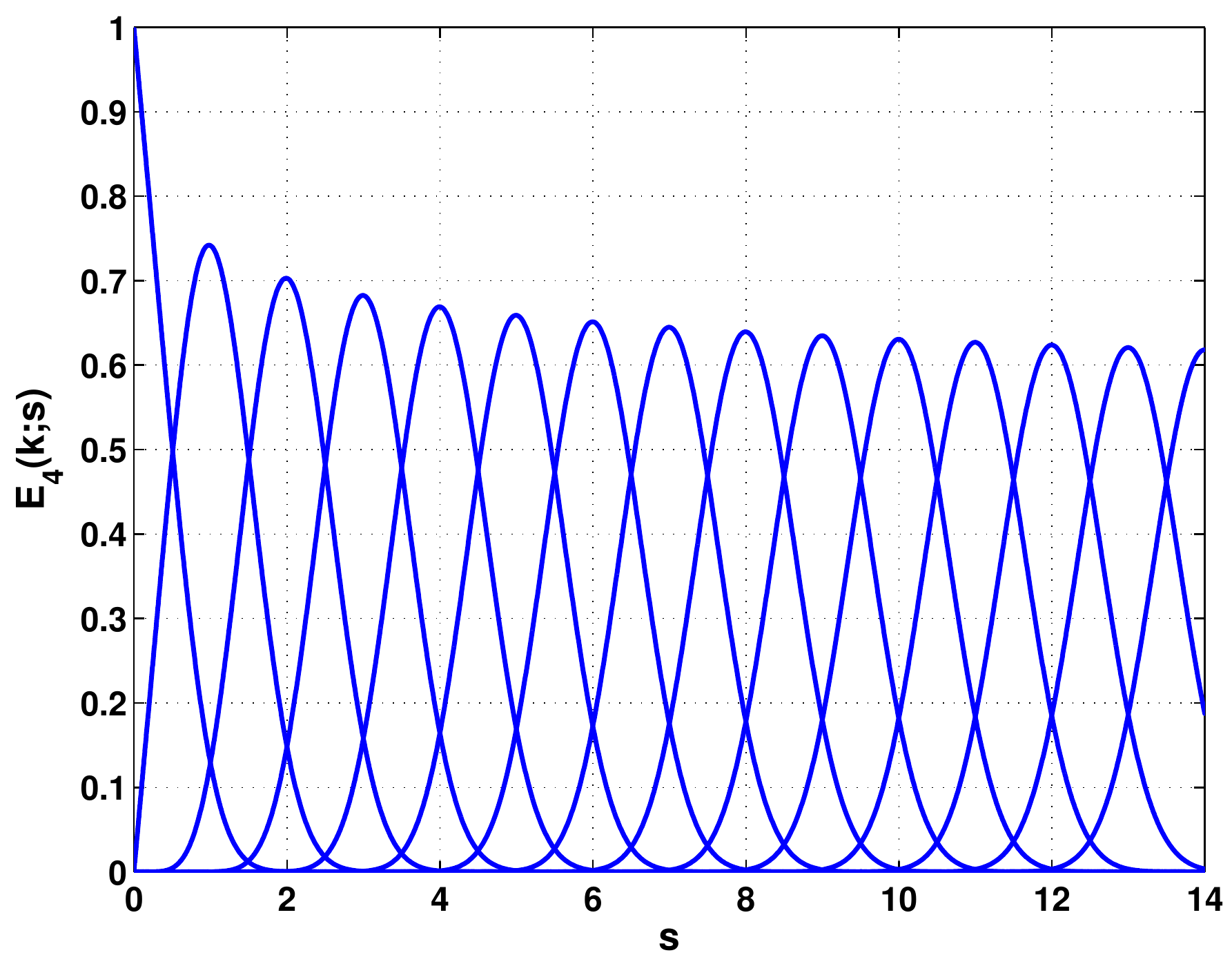}\\*[-1mm]
{\footnotesize b. \, $k$-level spacing functions $E_4(k;s)$ of GSE}
\end{center}
\end{minipage}
\end{center}\vspace*{-0.0625cm}
\caption{Plots of the $k$-level spacing functions $E_1(k;s)$ in the bulk scaling limit of GOE
and $E_4(k;s)$ in the bulk scaling limit of GSE ($k=0,\ldots,14$; larger $k$ go to the right). The
underlying calculations were all done in hardware arithmetic and are accurate to  an absolute error of about $5\cdot 10^{-15}$. Each of the two plots
took a run time of less than one minute. Compare with \protect\citeasnoun[Figs.~7.3/11.1]{MR2129906}.
}
\label{fig:GOEGSEbulk}
\end{figure}

\section{The k-th Largest Level at the Soft Edge: GOE and GSE}\label{sect:edge}

In this section we derive new determinantal formulae for the cumulative distribution functions $F_1(k;s)$ and $F_4(k;s)$ of the $k$-th largest level
in the soft edge scaling limit of GOE and GSE. We recall from (\ref{eq:Fbeta}) that
\begin{equation}\label{eq:Fbeta2}
F_\beta(k;s) = \sum_{j=0}^{k-1} \tilde E_\beta(j;s),
\end{equation}
where we briefly write
\begin{equation}\label{eq:EbetaTilde}
\tilde E_\beta(k;s) = E_\beta^{(\text{soft})}(k;(s,\infty)).
\end{equation}
The new determinantal formulae of this section are amenable to the efficient numerical evaluation by the methods of Section~\ref{sect:fredholm};
but, more important, they are derived from a determinantal equation (\ref{eq:conjecture}) whose truth we established by numerical experiments \emph{before} proving it rigorously.
Therefore, we understand this section as an invitation to the area of Experimental Mathematics \cite{MR2033012}.

In a broad analogy to the previous section we start with a factorization of the Airy kernel determinant (\ref{eq:D2edge}) that we have learnt from \citeasnoun[Eq.~(34)]{MR2165698}.
Namely, the integral representation
\begin{equation}\label{eq:KairyFact}
K_{\Ai}(x,y) = \int_0^\infty \Ai(x+\xi)\Ai(y+\xi)\,d\xi
\end{equation}
of the Airy kernel implies, by introducing the kernels
\begin{equation}\label{eq:K1}
T_s(x,y) = \Ai(x+y+s), \qquad V_{\Ai}(x,y) =\frac{1}{2}\Ai\left(\frac{x+y}{2}\right),
\end{equation}
the factorization
\begin{multline}\label{eq:edgefact}
\det\left(I- z K_{\Ai}\projected{L^2(s,\infty)}\right) = \det\left(I- z\left(T_s\projected{L^2(0,\infty)}\right)^2 \right)\\*[2mm]
=\det\left(I- \sqrt{z}\, T_s\projected{L^2(0,\infty)}\right) \cdot \det\left(I+ \sqrt{z}\, T_s\projected{L^2(0,\infty)}\right)\\*[2mm]
=\det\left(I- \sqrt{z}\, V_{\Ai}\projected{L^2(s,\infty)}\right) \cdot \det\left(I+ \sqrt{z}\, V_{\Ai}\projected{L^2(s,\infty)}\right)
\end{multline}
that is valid for the complex cut plane $z \in \C\setminus(-\infty,0]$.
Now, upon introducing the functions
\begin{equation}\label{eq:EpmTilde}
\tilde E_\pm(k;s) = \left. \frac{(-1)^k}{k!} \frac{d^k}{dz^k} \det\left(I\mp\sqrt{z}\,V_{\Ai}\projected{L^2(s,\infty)}\right) \right|_{z=1}
\end{equation}
we obtain, using the factorization (\ref{eq:edgefact}) and the Leibniz formula applied to (\ref{eq:E2Edge}), the representation
\begin{equation}\label{eq:E2EdgeSum}
\tilde E_2(k;s) = \sum_{j=0}^k \tilde E_+(j;s) \tilde E_-(k-j;s).
\end{equation}
Further, \citeasnoun[Eqs.~(33/35)]{MR2165698} proved that
\begin{subequations}\label{eq:ferrari}
\begin{align}
\tilde E_1(0;s) &= \tilde E_+(0;s) \label{eq:E10Edge}\\*[2mm]
\tilde E_4(0;s) &= \tfrac{1}{2}(\tilde E_+(0;s) + \tilde E_-(0;s)).\label{eq:E40Edge}
\end{align}
\end{subequations}
The similarity of the pairs of formulae (\ref{eq:E2EdgeSum})/(\ref{eq:E2sum}), (\ref{eq:E10Edge})/(\ref{eq:E10}), and (\ref{eq:E40Edge})/(\ref{eq:E4}) (the last with $k=0$) is
absolutely striking. So we asked ourselves whether (\ref{eq:E40Edge}) generalizes to the analogue of (\ref{eq:E4}) for general $k$, that is, whether
\begin{equation}\label{eq:E4Edge}
\tilde E_4(k;s) = \tfrac{1}{2}(\tilde E_+(k;s) + \tilde E_-(k;s))\qquad (k=0,1,2,\ldots)
\end{equation}
is valid in general. In view of (\ref{eq:GSEMatrixKernelDet}) such a result is \emph{equivalent} to the following theorem. We first convinced ourselves of its truth in the
sense of experimental mathematics: by numerically\footnote{The function $D_4(z;(s,\infty))$
was evaluated using the extension of our method to matrix kernel determinants that will be discussed in Section~\ref{sect:matrixkern}; see also Example~\ref{sect:instance}
for a concrete instance.} checking its assertion for 100\,000 randomly chosen arguments. Thus being encouraged, we then worked out the proof given below.\footnote{Later though, we found that
the result has recently been established by \citeasnoun[Eq.~(1.23)]{MR2275509}.}

\begin{theorem}\label{thm:forrester} The determinantal equation
\begin{multline}\label{eq:conjecture}
D_4(z;(s,\infty))^{1/2} \\*[1mm]
= \frac{1}{2}\left(\det\left(I-\sqrt{z}\,V_{\Ai}\projected{L^2(s,\infty)}\right)+\det\left(I+\sqrt{z}\,V_{\Ai}\projected{L^2(s,\infty)}\right)\right)
\end{multline}
holds for all $s \in \R$ and $z$ in the complex domain of analyticity that contains $z=1$.
\end{theorem}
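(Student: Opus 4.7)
The plan is to square both sides and prove the equivalent scalar identity
\[
D_4(z;(s,\infty)) = \tfrac14\bigl(a(z)+b(z)\bigr)^2,
\]
where $a(z) = \det(I-\sqrt z\, V_{\Ai}\projected{L^2(s,\infty)})$ and $b(z) = \det(I+\sqrt z\, V_{\Ai}\projected{L^2(s,\infty)})$. The ambiguity of the square root in the original statement is pinned down by matching at $z=0$ (where both sides equal $1$) and continuing analytically throughout the connected domain where $D_4$ and $(a+b)/2$ remain nonzero and analytic; this is where the domain restriction in the statement of the theorem comes from. Using the factorization (\ref{eq:edgefact}), one has $a(z)b(z) = \det(I-z K_{\Ai}\projected{L^2(s,\infty)})$, so the squared identity is in turn equivalent to
\[
D_4(z;(s,\infty)) \;-\; \tfrac12\det(I-z K_{\Ai}\projected{L^2(s,\infty)}) \;=\; \tfrac14\bigl(a(z)-b(z)\bigr)^2,
\]
where both sides vanish identically at order $z^0$ and $z^1$ of their power series around $z=0$. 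By Hadamard's bound on Fredholm minors, both sides are entire in $z$, so it suffices to verify the identity as formal power series in $z$ about the origin.

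The core algebraic step is to exploit the rank-one perturbation structure of the GSE matrix kernel relative to the Airy kernel. Writing $\rho(y) = \int_y^\infty \Ai(\eta)\,d\eta$, the definitions (\ref{eq:S})--(\ref{eq:IS}) present $S, SD, IS, S^*$ as Airy-derived ``base'' kernels ($K_{\Ai}$, $-\partial_y K_{\Ai}$, $-\int_x^\infty K_{\Ai}\,d\xi$, $K_{\Ai}$) corrected by the four outer products $-\tfrac12\Ai\otimes\rho$, $-\tfrac12\Ai\otimes\Ai$, $+\tfrac12\rho\otimes\rho$, $-\tfrac12\rho\otimes\Ai$. I would therefore present the matrix kernel as a finite-rank (rank two per block) perturbation of a pure-Airy base whose determinant, via the convolution identity (\ref{eq:KairyFact}) and the same even/odd decomposition that produces (\ref{eq:edgefact}), equals $a(z)b(z)$. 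The matrix determinant lemma then expresses $D_4(z;(s,\infty))$ as $a(z)b(z)$ times a finite $2{\times}2$ determinant built from the inner products of $\Ai$ and $\rho$ against the resolvent of the base matrix kernel, and the task reduces to showing that this scalar determinant equals $(a+b)^2/(4ab)$.

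The main obstacle is the explicit evaluation of those ``dressed'' inner products against the base resolvent. The key structural input should be that, after the unitary change of variables $x\mapsto 2x+s$ converting $L^2(s,\infty)$ into $L^2(0,\infty)$, the kernel $V_{\Ai}$ becomes unitarily equivalent to $T_s$ from (\ref{eq:K1}), and the functions $\Ai(\cdot+s)$, $\rho(\cdot+s)$ align precisely with the even and odd projectors in whose spectral subspaces the factors $a(z)$ and $b(z)$ live separately. Once this intertwining is carried out, the finite-dimensional determinant splits additively over the even/odd subspaces and collapses, after integration by parts against the derivative and antiderivative that appear in $SD$ and $IS$, to $1+(a-b)^2/(4ab)$. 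As a sanity check, the $k=0$ specialization at $z=1$ must reproduce Ferrari's identity (\ref{eq:E40Edge}), and the general-$z$ version should be compared against the numerical evidence that originally motivated the conjecture.
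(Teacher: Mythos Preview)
Your approach is genuinely different from the paper's and, as written, contains a real gap. The paper does \emph{not} attempt a direct operator-theoretic argument; in fact it remarks that the operator-theoretic arguments of Ferrari--Spohn, which gave the $k=0$ case (\ref{eq:ferrari}), do not extend directly. Instead the paper proceeds via Painlev\'e representations: it invokes Dieng's formula
\[
D_4(\theta^2;(s,\infty))^{1/2}=D_2^{(\text{soft})}(\theta^2;(s,\infty))^{1/2}\cosh\Bigl(\tfrac12\int_s^\infty u(x;\theta)\,dx\Bigr)
\]
and the Desrosiers--Forrester identity
\[
\det\bigl(I-\theta\,V_{\Ai}\projected{L^2(s,\infty)}\bigr)=\exp\Bigl(-\tfrac12\int_s^\infty u(x;\theta)\,dx\Bigr)\,\det\bigl(I-\theta^2 K_{\Ai}\projected{L^2(s,\infty)}\bigr)^{1/2},
\]
both expressed through the Painlev\'e~II solution $u(x;\theta)$. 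Adding the $\pm\theta$ cases of the second identity and comparing with the first gives (\ref{eq:conjecture}) immediately. This buys brevity at the cost of importing two nontrivial results from the Painlev\'e literature.

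Your rank-perturbation programme is a sensible alternative in spirit, but two steps are asserted rather than proved. First, the claim that the ``pure-Airy base'' matrix kernel---with off-diagonal entries $-\partial_y K_{\Ai}$ and $-\int_x^\infty K_{\Ai}(\xi,\cdot)\,d\xi$---has Fredholm determinant equal to $a(z)b(z)=\det(I-zK_{\Ai})$ needs justification: those off-diagonal operators are not small or nilpotent in any obvious sense, and the conjugation you would need (by a differentiation/integration pair) involves unbounded operators on $L^2(s,\infty)$, so one cannot simply invoke invariance of the determinant under similarity. Second, the assertion that the resulting $2\times 2$ resolvent determinant ``collapses, after integration by parts \ldots, to $1+(a-b)^2/(4ab)$'' is precisely the heart of the matter and is not carried out; this is where the even/odd intertwining you describe would have to be made explicit, and where the difficulty the paper alludes to actually lives. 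Without those two computations your argument is a plausible outline but not yet a proof.
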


\begin{proof}
Since the operator theoretic arguments of \citeasnoun{MR2165698} cannot directly be extended to yield (\ref{eq:conjecture}) we proceed
by using Painlevé representations. \citeasnoun[Thm.~1.2.1, Eq.~(1.2.2)]{Dieng05} proved that (\ref{eq:F4PII}) generalizes to
\begin{subequations}
\begin{equation}\label{eq:DiengD4}
D_4(\theta^2;(s,\infty))^{1/2} = D_2^{(\text{soft})}(\theta^2;(s,\infty))^{1/2} \cosh\left(\frac{1}{2} \int_s^\infty u(x;\theta)\,dx\right)
\end{equation}
in terms of the Painlevé representation
\begin{equation}
u_{xx} = 2u^3 +x u,\qquad u(x;\theta)\simeq\theta \cdot \Ai(x)\quad(x\to\infty).
\end{equation}
\end{subequations}
Here we put $\sqrt{\smash[b]{z}}=\theta$ and observe that (\ref{eq:DiengD4}) obviously extends, by the symmetry of the Painlevé II equation, from $0<\theta\leq 1$ to the range $-1\leq \theta \leq 1$.
In view of~(\ref{eq:F2PII})  Dieng's representation (\ref{eq:DiengD4}) readily implies, by analytic continuation, the asserted formula (\ref{eq:conjecture}) \emph{if} the representation
\begin{multline}\label{eq:conjecture2}
\det\left(I -\theta\,V_{\Ai}\projected{L^2(s,\infty)}\right) = \exp\left(-\frac{1}{2}\int_s^\infty u(x;\theta)\,dx\right)\det\left(I-\theta^2 \,K_{\Ai}\projected{L^2(s,\infty)}\right)^{1/2}\\*[2mm]
= \exp\left(-\frac{1}{2}\int_s^\infty (u(x;\theta) + (x-s)u(x;\theta)^2)\,dx\right)
\end{multline}
happens to be true for all $-1\leq\theta\leq 1$ (note that we have chosen the signs in accordance with the
the special cases $\theta=\pm 1$ as given by (\ref{eq:ferrari}) and~(\ref{eq:GOEGSEPII})). Now, this particular Painlevé representation can directly be read off from the work of \citeasnoun[Eqs.~(4.8/19)]{MR2229797}, which
completes the proof.
\end{proof}

%\begin{remark}
%Note that the methods of \citeasnoun{MR1277933} do not directly apply to~(\ref{eq:conjecture2}) since the kernel
%$V_{\Ai}(x,y)=\tfrac12\Ai((x+y)/2)$
%is apparently not of the integrable form considered there.
%\end{remark}

It remains to establish formulae for the GOE functions $\tilde E_1(k;s)$ that are structurally similar to (\ref{eq:E1}). To this end we use
the interrelationships
between GOE, GUE, and GSE found by \citeasnoun[Thm.~5.2]{MR1842786},\footnote{That is why we have chosen,  in defining the Gaussian ensembles,
the same variances of the Gaussian weights as \citeasnoun{MR1842786}.} which can symbolically be written in the form
\begin{subequations}
\begin{align}
\text{GSE}_{n} &= \text{even}(\text{GOE}_{2n+1}) , \label{eq:GSEfromeven}\\
\text{GUE}_{n} &= \text{even}(\text{GOE}_{n} \cup \text{GOE}_{n+1}).\label{eq:GUEfromeven}
\end{align}
\end{subequations}
The meaning is as follows: First, the statistics of the ordered eigenvalues of the $n$-dimensional GSE is the same as that of the even numbered ordered eigenvalues of the $2n+1$-dimensional GOE.
Second, the statistics of the ordered eigenvalues of the $n$-dimensional GUE is the same as that of the even numbered ordered levels obtained from joining the eigenvalues
of a $n$-dimensional GOE with
the eigenvalues of a statistically independent $n+1$-dimensional GOE.

Now, (\ref{eq:GSEfromeven}) readily implies, in the soft edge scaling limit (\ref{eq:GEedge}), that the cumulative distribution function of the $k$-th largest eigenvalue in GSE agrees with the
cumulative distribution function of the $2k$-th largest of GOE,
\begin{equation}\label{eq:interlacing}
F_4(k;s) = F_1(2k;s)\qquad (k=1,2,3,\ldots),
\end{equation}
the so-called \emph{interlacing property}. Therefore, in view of (\ref{eq:Fbeta2}) and (\ref{eq:E4Edge}) we get
\begin{equation}\label{eq:func3}
\tilde E_1(2k;s) + \tilde E_1(2k+1;s) = \tfrac12(\tilde E_+(k;s) + \tilde E_-(k;s)).
\end{equation}

Further, the combinatorics of (\ref{eq:GUEfromeven}) implies, in the soft edge scaling limit (\ref{eq:GEedge}): exactly $k$ levels of GUE are larger than $s$ if and only if
exactly $2k$ or $2k+1$ levels of the union of GOE with itself
are larger than $s$. Here, $j$ levels are from the first copy of GOE and $2k-j$, or $2k+1-j$, are from the second copy. Since all of these events are mutually exclusive, we get
\begin{equation}\label{eq:func4}
\tilde E_2(k;s) =
 \sum_{j=0}^{2k} \tilde E_1(j;s) \tilde E_1(2k-j;s) + \sum_{j=0}^{2k+1} \tilde E_1(j;s) \tilde E_1(2k+1-j;s).
\end{equation}

Finally, the following theorem gives the desired (recursive) formulae for the functions $\tilde E_1(k;s)$ in terms of $\tilde E_\pm(k;s)$. Note that these recursion formulae, though being quite different from
(\ref{eq:E1}), share the separation into even and odd numbered cases.

\begin{theorem}\label{thm:algebra} The system (\ref{eq:E2EdgeSum}), (\ref{eq:E10Edge}), (\ref{eq:func3}), and (\ref{eq:func4}) of functional equations has the unique, recursively defined solution
\begin{subequations}\label{eq:E1recursion}
\begin{align}
\tilde E_1(2k;s) &= \tilde E_+(k;s) - \sum_{j=0}^{k-1} \frac{\binom{2j}{j}}{2^{2j+1}(j+1)} \tilde E_1(2k-2j-1;s),\\*[2mm]
\tilde E_1(2k+1;s) &= \frac{\tilde E_+(k;s)+\tilde E_-(k;s)}{2} - \tilde E_1(2k;s).
\end{align}
\end{subequations}
\end{theorem}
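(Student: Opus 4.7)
The plan is to recast the system (\ref{eq:E2EdgeSum}), (\ref{eq:E10Edge}), (\ref{eq:func3}), (\ref{eq:func4}) in terms of formal power series in an auxiliary variable $x$, solve it in closed form, and then read off the recursion by expanding a square root. Introduce the generating functions $B_\pm(x) = \sum_{k\geq 0} \tilde E_\pm(k;s)\,x^k$, $C(x) = \sum_{k\geq 0} \tilde E_2(k;s)\,x^k$, and split the unknown $\tilde E_1$'s into even and odd parts via $U(x) = \sum_{k\geq 0} \tilde E_1(2k;s)\,x^k$ and $V(x) = \sum_{k\geq 0} \tilde E_1(2k+1;s)\,x^k$. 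A short calculation, sorting $A(y)^2$ by the parity of the exponent and then setting $y^2 = x$, translates the four equations respectively into $C = B_+ B_-$ (an input-side compatibility condition), $U(0) = B_+(0)$, the linear relation $U + V = \tfrac12(B_+ + B_-)$, and the quadratic relation $C = U^2 + 2UV + xV^2$.

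Next I would eliminate $U$ from the quadratic relation using the linear one. Since $U^2 + 2UV + xV^2 = (U+V)^2 + (x-1)V^2$, substituting yields
\begin{equation*}
(x-1)\,V(x)^2 \;=\; B_+(x)B_-(x) - \tfrac14(B_+(x)+B_-(x))^2 \;=\; -\tfrac14\bigl(B_+(x) - B_-(x)\bigr)^2,
\end{equation*}
so that, as formal power series (with $\sqrt{1-x}$ expanded in the standard way about $x=0$),
\begin{equation*}
V(x) \;=\; \frac{B_-(x) - B_+(x)}{2\sqrt{1-x}},
\end{equation*}
the sign being pinned down by the value $\tilde E_1(1;s) = \tfrac12(\tilde E_-(0;s) - \tilde E_+(0;s))$ that one reads off from (\ref{eq:func4}) at $k=0$ combined with (\ref{eq:E10Edge}). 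Then $U(x) = \tfrac12(B_+ + B_-) - V(x) = B_+(x) - (1 - \sqrt{1-x})\,V(x)$.

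Finally, I would invoke the classical Catalan generating function identity
\begin{equation*}
\frac{1 - \sqrt{1-x}}{x} \;=\; \sum_{j=0}^\infty \frac{\binom{2j}{j}}{2^{2j+1}(j+1)}\,x^j
\end{equation*}
and extract coefficients. The relation $U(x) = B_+(x) - x\cdot\frac{1-\sqrt{1-x}}{x}\cdot V(x)$ gives, at the level of coefficients of $x^k$, exactly the first line of (\ref{eq:E1recursion}), while the second line is a direct restatement of (\ref{eq:func3}). Uniqueness is visible from the same recursion, which determines every $\tilde E_1(k;s)$ from the given data once $\tilde E_1(0;s) = \tilde E_+(0;s)$ is fixed. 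I expect the only bookkeeping obstacles to be the passage from (\ref{eq:func4}) to the quadratic identity $C = U^2 + 2UV + xV^2$, which requires careful splitting of $A(y)^2$ by the parity of $y$ before the substitution $y^2 = x$, and the sign choice for the square root, which is resolved by matching constant terms.
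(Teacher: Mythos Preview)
Your proof is correct and follows essentially the same generating-function approach as the paper: encode the even/odd parts of $\tilde E_1$ and the data $\tilde E_\pm$ as power series, combine the linear and quadratic relations, extract a square root whose branch is fixed by the constant term, and read off the recursion via the Catalan expansion of $1-\sqrt{1-x}$. The only cosmetic differences are that the paper works in the variable $x^2$ rather than $x$ and eliminates $g_-$ to obtain a quadratic in $f_{\text{even}}-g_+$, whereas you complete the square $(U+V)^2+(x-1)V^2$ to solve directly for $V$; the content is the same.
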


\begin{proof}
We introduce the generating functions
\begin{subequations}
\begin{align}
f_{\text{even}}(x) &= \sum_{k=0}^\infty \tilde E_1(2k;s) x^{2k}\\*[1mm]
f_{\text{odd}}(x) &= \sum_{k=0}^\infty \tilde E_1(2k+1;s) x^{2k}\\*[1mm]
g_\pm(x) &= \sum_{k=0}^\infty \tilde E_\pm(k;s) x^{2k}.
\end{align}
\end{subequations}
\possessivecite{MR2165698} representation (\ref{eq:E10Edge}) translates into the constant term equality (which breaks the symmetry of the other functional equations)
\begin{equation}\label{eq:f0}
f_{\text{even}}(0)= g_+(0).
\end{equation}
Equating (\ref{eq:E2EdgeSum}) and (\ref{eq:func4})  translates into
\begin{equation}
f_{\text{even}}(x)^2 + 2 f_{\text{even}}(x)f_{\text{odd}}(x) +x^2 f_{\text{odd}}(x)^2 = g_+(x)\cdot g_-(x).
\end{equation}
Finally, (\ref{eq:func3}) translates into
\begin{equation}
f_{\text{even}}(x) + f_{\text{odd}}(x) = \tfrac12 (g_+(x) + g_-(x)).
\end{equation}
Elimination of $g_-$ from the last two equations results in the quadratic equation
\begin{equation}
(f_{\text{even}}(x) - g_+(x))^2 + 2(f_{\text{even}}(x) - g_+(x))f_{\text{odd}}(x) + x^2 f_{\text{odd}}(x)^2 = 0.
\end{equation}
Solving for $f_{\text{even}}(x) - g_+(x)$ gives the two possible solutions
\begin{equation}
f_{\text{even}}(x) - g_+(x) = -\left( 1 \pm \sqrt{1-x^2}\right) f_{\text{odd}}(x).
\end{equation}
Because of $f_{\text{odd}}(0) = \tilde E_1(1;s) >0$ we have to choose the negative sign of the square root to
satisfy (\ref{eq:f0}). To summarize, we have obtained the mutual relations
\begin{subequations}\label{eq:feo}
\begin{align}
f_{\text{even}}(x) &= g_+(x) - \left(1-\sqrt{1-x^2}\right) f_{\text{odd}}(x), \\*[0mm]
 f_{\text{odd}}(x) &= \tfrac12 (g_+(x) + g_-(x)) - f_{\text{even}}(x),
\end{align}
\end{subequations}
which then, by the Maclaurin expansion
\begin{equation}
1-\sqrt{1-x^2} = \sum_{j=0}^\infty \frac{\binom{2j}{j}}{2^{2j+1}(j+1)} x^{2j+2},
\end{equation}
translate back into the asserted recursion formulae of the theorem.
\end{proof}

\begin{remark} The system (\ref{eq:feo}) can readily be solved for $f_{\text{even}}(x)$ and $f_{\text{odd}}(x)$ to yield
\begin{multline}
\sum_{k=0}^\infty \tilde E_1(k;s) x^k = f_{\text{even}}(x) + x f_{\text{odd}}(x) \\*[1mm]
= \frac{1}{2} \left(g_+(x) \left(1+\sqrt\frac{1-x}{1+x}\,\right) + g_-(x)\left(1-\sqrt\frac{1-x}{1+x}\,\right)\right).
\end{multline}
In view of (\ref{eq:GOEMatrixKernelDet}) this implies the determinantal equation \citeaffixed[Eq.~(1.22)]{MR2275509}{cf., after squaring,}
\begin{multline}\label{eq:D1Formel}
D_1(z;(s,\infty))^{1/2} = \frac{1}{2}\left( \det\left(I -\sqrt{z(2-z)} \,V_{\Ai}\projected{L^2(s,\infty)} \right)\left(1+\sqrt\frac{z}{2-z}\,\right)\right.\\*[2mm]
\left. + \det\left(I +\sqrt{z(2-z)} \,V_{\Ai}\projected{L^2(s,\infty)} \right)\left(1-\sqrt\frac{z}{2-z}\,\right)\right)
\end{multline}
for all $s\in\R$ and $z$ in the complex domain of analyticity that contains $z=1$. This formula
thus paves, following the arguments of the proof of Theorem~\ref{thm:forrester}, an elementary road to
the Painlevé representation \cite[Eq.~(1.2.1)]{Dieng05} of $D_1(z;(s,\infty))$ \citeaffixed[Eq.~(1.17)]{MR2275509}{see also}.
\end{remark}

\begin{figure}[tbp]
\begin{center}
\begin{minipage}{0.49\textwidth}
\begin{center}
\includegraphics[width=\textwidth]{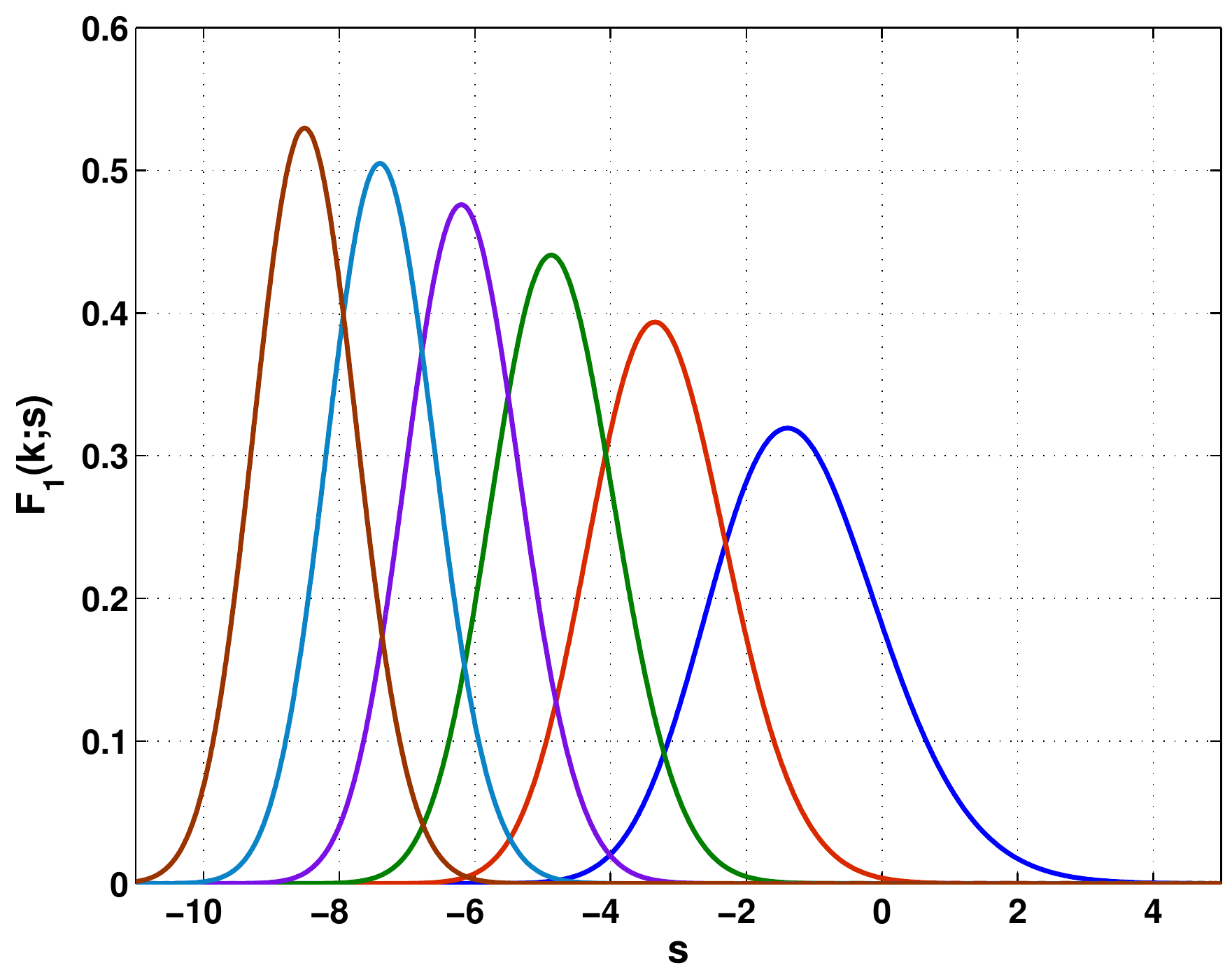}\\*[-1mm]
{\footnotesize a. \, density $\partial_s F_1(k;s)$ of $k$-th largest level in GOE}
\end{center}
\end{minipage}
\hfil
\begin{minipage}{0.49\textwidth}
\begin{center}
\includegraphics[width=\textwidth]{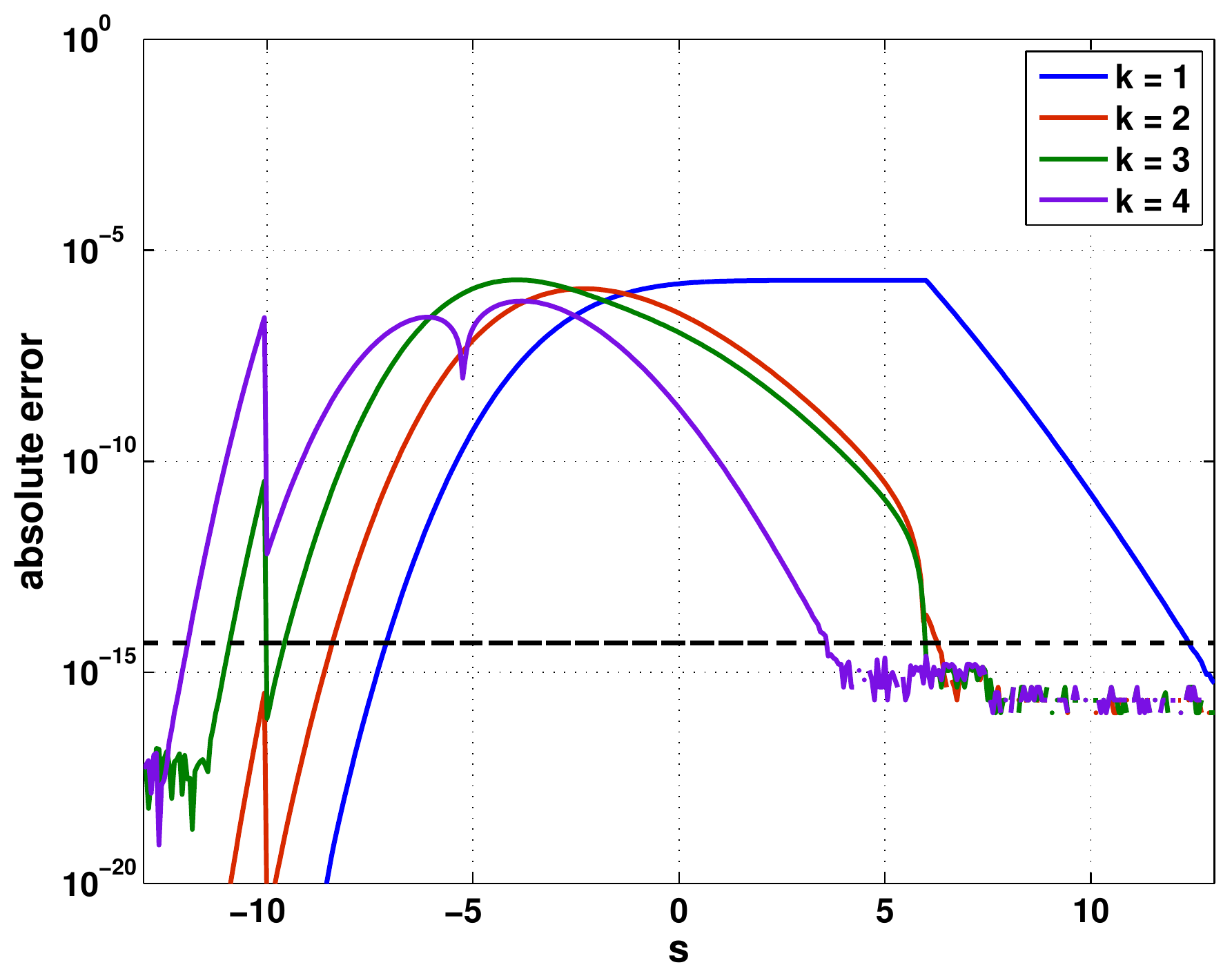}\\*[-1mm]
{\footnotesize b. \, absolute error of \protect\possessivecite{Dieng05} evaluation}
\end{center}
\end{minipage}
\end{center}\vspace*{-0.0625cm}
\caption{Left: plots of the probability densities of the $k$-th largest level in the soft edge scaling limit of GOE ($k=1,\ldots,6$; larger $k$ go to the left)
based on the recursion formulae (\ref{eq:E1recursion}). The underlying calculations were all done in hardware arithmetic and are accurate to  an
absolute error of about $5\cdot 10^{-15}$ (dashed line). Right: the absolute error (taking the values of the calculations on the left as reference) of the Painlevé II based calculations by \protect\citeasnoun{Dieng05}.
}
\label{fig:GOEedge}
\end{figure}

Based on the recursion formulae (\ref{eq:E1recursion}) and the numerical methods of Section~\ref{sect:fredholm} we calculated the distribution
functions $F_1(k;s)$ of the $k$-th largest level in the soft edge scaling limit of GOE---note that because of (\ref{eq:interlacing}) there is
no need for a separate calculation of the corresponding distributions $F_4(k;s)$ for GSE. The corresponding densities are shown, for $k=1,\ldots,6$, in Figure~\ref{fig:GOEedge}.a.
These calculations
are accurate to the imposed absolute tolerance of $5\cdot 10^{-15}$. Taking our numerical solutions as reference, Figure~\ref{fig:GOEedge}.b shows the absolute error
of the Painlevé II based numerical evaluations by \citeasnoun{Dieng05}, which are available for $k=1,\ldots,4$. Perfectly visible are the points $a_+=-10$ and $b_-=6$ where \citename{Dieng05}
chose to switch from an asymptotic formula to the BVP solution of Painlevé II, see Section~\ref{sect:TW2}.

\section{The k-th Smallest Level at the Hard Edge: LOE and LSE}\label{sect:hard}

In this section we derive determinantal formulae for the cumulative distribution functions $F_{\beta,\alpha}(k;s)$ of the $k$-th smallest
level in the hard edge scaling limit of LOE, LUE, and LSE with parameter $\alpha$. It turns out that these formulae have the same \emph{algebraic} structure as those
developed for the $k$-th largest level in the soft edge scaling limit. Therefore, though more concise, we proceed step-by-step in parallel to the arguments of the preceding section.

By the underlying combinatorial structure we have
\begin{equation}\label{eq:FLE}
F_{\beta,\alpha}(k;s) = 1-\sum_{j=0}^{k-1}  E_{\beta,\alpha}(j;s),
\end{equation}
where we briefly write
\begin{equation}\label{eq:ELE}
E_{\beta,\alpha}(k;s) = E_\beta^{(\text{hard})}(k;(0,s),\alpha).
\end{equation}
The integral representation
\begin{equation}\label{eq:KBesselFact}
K_{\alpha}(x,y) = \frac14 \int_0^1 J_\alpha\left(\sqrt{\xi x}\right)J_\alpha\left(\sqrt{\xi \smash[b]{y}}\right)\,d\xi
\end{equation}
of the Bessel kernel implies, by introducing the kernels
\begin{equation}\label{eq:VJ}
T_{s,\alpha}(x,y) = \frac{\sqrt{s}}{2}J_\alpha\left(\sqrt{s x \smash[b]{y}}\right), \qquad V_\alpha(x,y) = \frac12 J_\alpha\left(\sqrt{x\smash[b]{y}}\right),
\end{equation}
the factorization
\begin{multline}\label{eq:hardfact}
\det\left(I- z K_{\alpha}\projected{L^2(0,s)}\right) = \det\left(I- z\left(T_{s,\alpha}\projected{L^2(0,1)}\right)^2 \right)\\*[2mm]
=\det\left(I- \sqrt{z}\, T_{s,\alpha}\projected{L^2(0,1)}\right) \cdot \det\left(I+ \sqrt{z}\, T_{s,\alpha}\projected{L^2(0,1)}\right)\\*[2mm]
=\det\left(I- \sqrt{z}\, V_\alpha\projected{L^2(0,\sqrt{s\,})}\right) \cdot \det\left(I+ \sqrt{z}\, V_\alpha\projected{L^2(0,\sqrt{s\,})}\right)
\end{multline}
that is valid for the complex cut plane $z \in \C\setminus(-\infty,0]$. Now, upon introducing the functions
\begin{equation}\label{eq:Epmalpha}
E_{\pm,\alpha}(k;s) = \left. \frac{(-1)^k}{k!} \frac{d^k}{dz^k} \det\left(I\mp\sqrt{z}\,V_\alpha\projected{L^2(0,\sqrt{s\,})}\right) \right|_{z=1}
\end{equation}
we obtain from (\ref{eq:E2hard})
\begin{equation}\label{eq:E2HardSum}
E_{2,\alpha}(k;s) = \sum_{j=0}^k E_{+,\alpha}(j;s) E_{-,\alpha}(k-j;s).
\end{equation}
\citeasnoun[Prop.~1]{MR2229797} proved that
\begin{subequations}\label{eq:desfor}
\begin{align}
E_{1,\frac{\alpha-1}{2}}(0;s) &= E_{+,\alpha}(0;s) \label{eq:E10Hard}\\*[2mm]
E_{4,\alpha+1}(0;s) &= \tfrac{1}{2}(E_{+,\alpha}(0;s) + E_{-,\alpha}(0;s)).\label{eq:E40Hard}
\end{align}
\end{subequations}
The last generalizes to
\begin{equation}\label{eq:E4Hard}
E_{4,\alpha+1}(k;s) = \tfrac{1}{2}(E_{+,\alpha}(k;s) + E_{-,\alpha}(k;s))\qquad (k=0,1,2,\ldots),
\end{equation}
or, equivalently, to the generating function
\begin{multline}\label{eq:Gen4Hard}
\sum_{k=0}^\infty E_{4,\alpha+1}(k;s) (1-z)^k \\
= \frac{1}{2}\left(\det\left(I-\sqrt{z}\,V_\alpha\projected{L^2(0,\sqrt{s\,})}\right)+\det\left(I+\sqrt{z}\,V_\alpha\projected{L^2(0,\sqrt{s\,})}\right)\right)
\end{multline}
that holds for all $s \in (0,\infty)$ and $z$ in the complex domain of analyticity that contains $z=1$. A proof of (\ref{eq:Gen4Hard}), using Painlevé representations,
was given by \citeasnoun[Eq.~(1.38)]{MR2275509}.

It remains to discuss the LOE. To this end we use
the interrelationships
between LOE, LUE, and LSE found by \citeasnoun[Thms.~4.3/5.1]{MR1842786},\footnote{Again, that is why we have chosen the weight functions scaled as in \citeasnoun{MR1842786}.}
which can symbolically be written in the form
\begin{subequations}
\begin{align}
\text{LSE}_{n,\alpha+1} &= \text{even}\left(\text{LOE}_{2n+1,\frac{\alpha-1}{2}}\right), \label{eq:LSEfromeven}\\*[1mm]
\text{LUE}_{n,\alpha} &= \text{even}\left(\text{LOE}_{n,\frac{\alpha-1}{2}} \,\cup\, \text{LOE}_{n+1,\frac{\alpha-1}{2}}\right).\label{eq:LUEfromeven}
\end{align}
\end{subequations}
Here, we write $\text{LOE}_{n,\alpha}$ for the $n$-dimensional LOE with parameter $\alpha$, etc. Otherwise, these symbolic equations have the same meaning
as (\ref{eq:GSEfromeven}) and (\ref{eq:GUEfromeven}). For the same purely combinatorial reasons as in the preceding section we hence get %the interlacing property
\begin{equation}\label{eq:interlacing2}
F_{4,\alpha+1}(k;s) = F_{1,\frac{\alpha-1}{2}}(2k;s)\qquad (k=1,2,3,\ldots),
\end{equation}
or, equivalently by (\ref{eq:FLE}) and (\ref{eq:E4Hard}),
\begin{equation}\label{eq:func3hard}
 E_{1,\frac{\alpha-1}{2}}(2k;s) +  E_{1,\frac{\alpha-1}{2}}(2k+1;s) = \tfrac12( E_{+,\alpha}(k;s) +  E_{-,\alpha}(k;s)),
\end{equation}
as well as \citeaffixed[Cor.~4]{MR2275509}{see also}
\begin{multline}\label{eq:func4hard}
E_{2,\alpha}(k;s) \\
 = \sum_{j=0}^{2k} E_{1,\frac{\alpha-1}{2}}(j;s) E_{1,\frac{\alpha-1}{2}}(2k-j;s) + \sum_{j=0}^{2k+1}  E_{1,\frac{\alpha-1}{2}}(j;s)  E_{1,\frac{\alpha-1}{2}}(2k+1-j;s).
\end{multline}
Since Theorem~\ref{thm:algebra} was in fact just addressing the solution of a specific system of functional
equations, we obtain the same result here because the system (\ref{eq:E2HardSum}), (\ref{eq:E10Hard}), (\ref{eq:func3hard}), and (\ref{eq:func4hard}) possesses
exactly the algebraic structure considered there. That is, we get the solution
\begin{subequations}\label{eq:E1Hardrecursion}
\begin{align}
E_{1,\frac{\alpha-1}{2}}(2k;s) &= E_{+,\alpha}(k;s) - \sum_{j=0}^{k-1} \frac{\binom{2j}{j}}{2^{2j+1}(j+1)}  E_{1,\frac{\alpha-1}{2}}(2k-2j-1;s),\\*[2mm]
E_{1,\frac{\alpha-1}{2}}(2k+1;s) &= \frac{ E_{+,\alpha}(k;s)+ E_{-,\alpha}(k;s)}{2} -  E_{1,\frac{\alpha-1}{2}}(2k;s).
\end{align}
\end{subequations}
and, completely parallel to (\ref{eq:D1Formel}), the corresponding generating function
\begin{multline}
\sum_{k=0}^\infty E_{1,\frac{\alpha-1}{2}}(k;s)(1-z)^k = \frac{1}{2}\left( \det\left(I -\sqrt{z(2-z)} \,V_\alpha\projected{L^2(0,\sqrt{s}\,)} \right)\left(1+\sqrt\frac{z}{2-z}\,\right)\right.\\*[2mm]
\left. + \det\left(I +\sqrt{z(2-z)} \,V_\alpha\projected{L^2(0,\sqrt{s}\,)} \right)\left(1-\sqrt\frac{z}{2-z}\,\right)\right)
\end{multline}
for all $s\in (0,\infty)$ and $z$ in the complex domain of analyticity that contains $z=1$.

\begin{remark}
Given the large order asymptotics \cite[Eq.~(9.5.01)]{MR0435697}
\begin{equation}
J_\alpha(\alpha + \zeta \alpha^{1/3}) = 2^{1/3} \alpha^{-1/3} \Ai(-2^{1/3} \zeta) + O(\alpha^{-1}) \qquad(\alpha\to\infty),
\end{equation}
which holds uniformly for bounded $\zeta$, we get, by changing variables subject to
\begin{equation}
x = \sqrt{\alpha^2 - 2\alpha (\alpha/2)^{1/3} \xi},\qquad  y = \sqrt{\alpha^2 - 2\alpha (\alpha/2)^{1/3} \eta},
\end{equation}
the kernel approximation
\begin{equation}
V_\alpha(x,y) \frac{dy}{d\eta} = - V_{\Ai}(\xi,\eta) + O(\alpha^{-2/3})\qquad(\alpha\to\infty),
\end{equation}
uniformly for bounded $\xi,\eta$. Since $y=0$ is mapped to $\eta = (\alpha/2)^{2/3} \to \infty$ ($\alpha\to \infty$) we therefore obtain, observing the fast decay of the Bessel and Airy functions,
the operator approximation (in trace class norm)
\begin{equation}
V_\alpha \projected{L^2\left(0,\sqrt{\alpha^2 - 2\alpha (\alpha/2)^{1/3} s}\,\right)} =  V_{\Ai}\projected{L^2(s,\infty)} + O(\alpha^{-2/3}) \qquad(\alpha\to\infty).
\end{equation}
This approximation implies the limit of the associated Fredholm determinants,
\begin{equation}
\lim_{\alpha\to \infty} \det\left(I-zV_\alpha \projected{L^2\left(0,\sqrt{\alpha^2 - 2\alpha (\alpha/2)^{1/3} s}\,\right)} \right) = \det\left(I-zV_{\Ai}\projected{L^2(s,\infty)}\right),
\end{equation}
or, equivalently,
\begin{equation}
\lim_{\alpha\to \infty} E_{\pm,\alpha}(k;\alpha^2 - 2\alpha (\alpha/2)^{1/3} s) = \tilde E_\pm(k;s) \qquad  (k=0,1,2,\ldots).
\end{equation}
Plugging this limit into (\ref{eq:E2HardSum}), (\ref{eq:E4Hard}), and (\ref{eq:E1Hardrecursion}) yields, by (\ref{eq:E2EdgeSum}), (\ref{eq:E4Edge}), and (\ref{eq:E1recursion}), the
hard-to-soft transition \citeaffixed[§4]{MR1986402}{see also}
\begin{equation}
\lim_{\alpha\to\infty} E_{\beta,\alpha(\beta)}(k;\alpha^2 - 2\alpha (\alpha/2)^{1/3} s) = \tilde E_\beta(k;s)
\end{equation}
with
\begin{equation}
\alpha(\beta) = \begin{cases}
(\alpha-1)/2, &\; \beta = 1,\\*[1mm]
\alpha, &\; \beta=2,\\*[1mm]
\alpha+1, &\; \beta = 4.
\end{cases}
\end{equation}
This transition is a further convenient mean to validate our numerical methods.
\end{remark}

\section{Matrix Kernels and Examples of Joint Probability Distributions}\label{sect:joint}

\subsection{Matrix Kernels}\label{sect:matrixkern}

 \citeasnoun[§8.1]{Bornemann1} showed that the quadrature based approach to the numerical approximation of Fredholm determinants that we described in Section~\ref{sect:basic}, can fairly easily be extended to
matrix kernel determinants of the form
\begin{equation}
d(z) = \det\left(I - z
\begin{pmatrix}
K_{11} & \cdots & K_{1N} \\
\vdots & & \vdots \\
K_{N1} & \cdots & K_{NN}
\end{pmatrix}
\projected{L^2(J_1)\oplus\cdots\oplus L^2(J_N)}\right),
\end{equation}
where $J_1,\ldots,J_N$ are open intervals and the smooth matrix kernel generates a trace class operator on $L^2(J_1)\oplus\cdots\oplus L^2(J_N)$. By taking an
$m$-point quadrature rule  of order $m$ with nodes $x_{ij}\in J_i$ and \emph{positive} weights $w_{ij}$, written in the form
\begin{equation}
\sum_{j=1}^m w_{ij} f(x_{ij}) \approx \int_{J_i} f(x)\,dx \qquad(i=1,\ldots,N),
\end{equation}
we approximate $d(z)$ by the $N\cdot m$-dimensional determinant
\begin{subequations}\label{eq:detfinN}
\begin{equation}
d_m(z) = \det\left(I - z
\begin{pmatrix}
A_{11} & \cdots & A_{1N} \\
\vdots & & \vdots \\
A_{N1} & \cdots & A_{NN}
\end{pmatrix}\right)
\end{equation}
with block entries $A_{ij}$ that are the $m\times m$ matrices given by
\begin{equation}
(A_{ij})_{p,q} = w_{ip}^{1/2} K_{ij}(x_{ip},x_{jq}) w_{jq}^{1/2}\qquad (p=1,\ldots,m;\,q=1,\ldots, m).
\end{equation}
\end{subequations}
If the kernel functions $K_{ij}(x,y)$ are analytic in a complex neighborhood of $J_i \times J_j$, one can prove exponential convergence \cite[Thm.~8.1]{Bornemann1}: there
is a constant $\rho>1$ such that
\begin{equation}
d_m(z) - d(z) = O(\rho^{-m})\qquad (m\to \infty),
\end{equation}
locally uniform in $z \in \C$. The results of Section~\ref{sect:finitedet}--\ref{sect:dens} apply then verbatim. This approach was used to evaluate
the matrix kernel determinant (\ref{eq:GSEMatrixKernelDet}) for the numerical checks of the fundamental equation (\ref{eq:conjecture}) in Section~\ref{sect:edge}.

\subsubsection{An example: the joint probability distribution of GUE matrix diffusion}

\citeasnoun{MR1933446} proved that the joint probability of the maximum eigenvalue of GUE matrix diffusion at two different times is given, in the soft edge scaling limit, by the
operator determinant
\begin{subequations}\label{eq:airyprocess}
\begin{equation}
\prob(\mathcal{A}_2(t)\leq s_1, \mathcal{A}_2(0) \leq s_2) = \det\left(I -
\begin{pmatrix}
K_0 & K_t \\*[1mm]
K_{-t} & K_0
\end{pmatrix}{\projected{L^2(s_1,\infty)\oplus L^2(s_2,\infty)}}\right)
\end{equation}
with kernel\vspace*{-2mm}
\begin{equation}\label{eq:Kt}
K_t(x,y) = \begin{cases}
\phantom{-}\displaystyle\int_0^\infty e^{-\xi t} \Ai(x+\xi)\Ai(y+\xi)\,d\xi &\qquad (t\geq 0), \\*[4mm]
-\displaystyle\int_{-\infty}^0 e^{-\xi t} \Ai(x+\xi)\Ai(y+\xi)\,d\xi &\qquad (t< 0).
\end{cases}
\end{equation}
\end{subequations}
(Note that $K_0=K_{\Ai}$, see (\ref{eq:KairyFact}).) This expression is directly amenable to our numerical methods; Figure~\ref{fig:joint}.a shows the covariance function that was calculated this way.
\begin{figure}[tbp]
\begin{center}
\begin{minipage}{0.478\textwidth}
\begin{center}
\includegraphics[width=\textwidth]{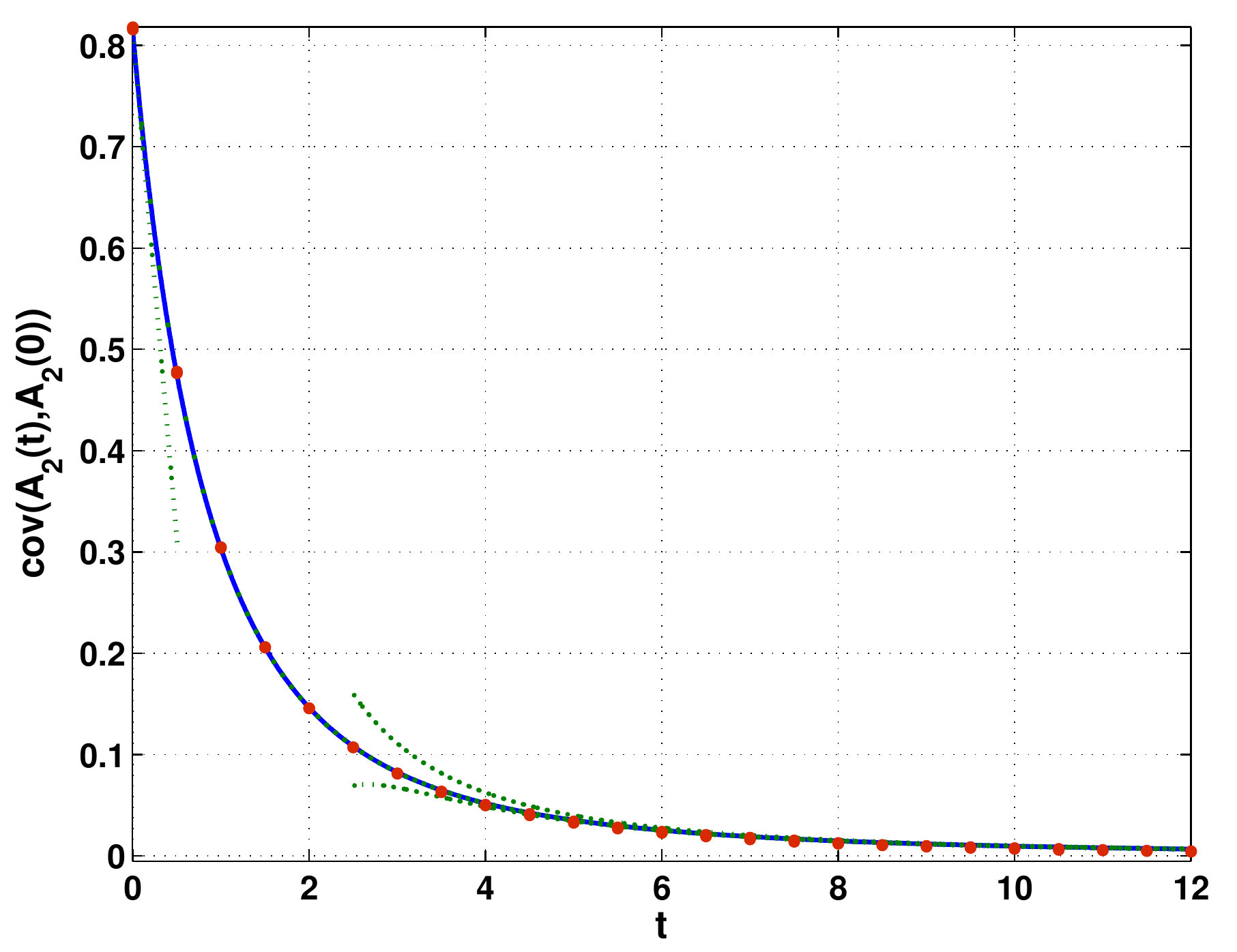}\\*[-1mm]
{\footnotesize a. \, $\cov(\mathcal{A}_2(t),\mathcal{A}_2(0))$ for GUE matrix diffusion}
\end{center}
\end{minipage}
\hfil
\begin{minipage}{0.502\textwidth}
\vspace*{0.8mm}
\begin{center}
\includegraphics[width=\textwidth]{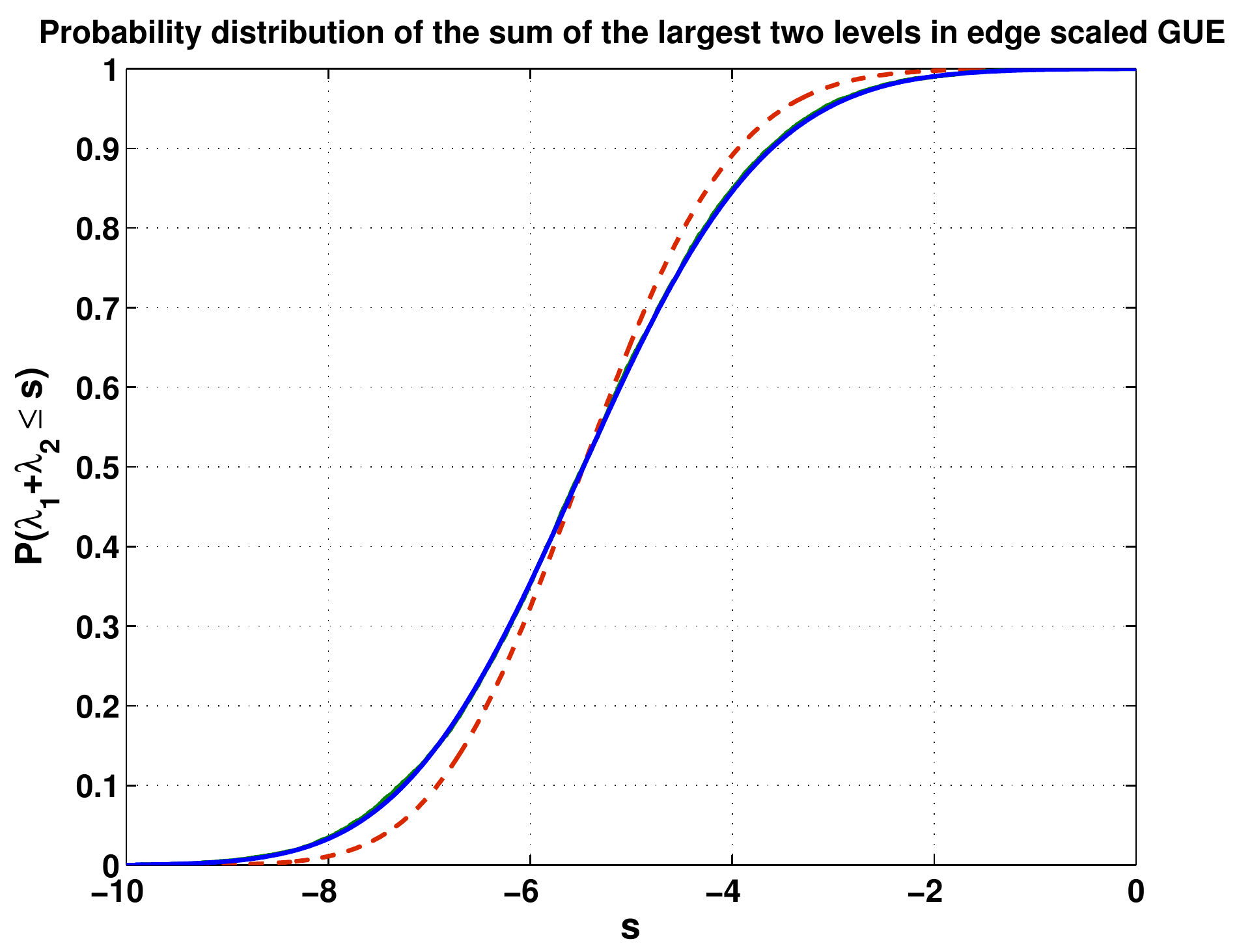}\\*[-1mm]
{\footnotesize b. \, CDF of the sum of largest two levels in GUE}
\end{center}
\end{minipage}
\end{center}\vspace*{-0.0625cm}
\caption{Left: plot (blue line) of the covariance $\cov(\mathcal{A}_2(t),\mathcal{A}_2(0))$ of the maximum eigenvalue of GUE matrix diffusion
at two different times (soft edge scaling limit), calculated from the joint probability distribution (\ref{eq:airyprocess}) to 10 digits accuracy (absolute tolerance $5\cdot 10^{-11}$).
The red dots show the data obtained from a Monte Carlo simulation with matrix dimensions $m=128$ and $m=256$ (there is no difference visible between the
two dimensions on the level of plotting accuracy). The dashed, green lines show the asymptotic expansions (\ref{eq:airy2asympt1}) and (\ref{eq:airy2asympt2}).
Right: plot (blue line) of the CDF (\ref{eq:CFDsum}) of the sum of the largest two levels of GUE (soft edge scaling limit), calculated
from the joint probability distribution (\ref{eq:twojoint}) to 10 digits accuracy (absolute tolerance $5\cdot 10^{-11}$). To compare with, we also show (red, dashed line)
the convolution (\ref{eq:CFDconv}) of the corresponding individual CDFs from Figure~\ref{fig:GUEexamples}.b; because of statistical dependence, there is a clearly visible difference.
}
\label{fig:joint}
\end{figure}
The results were cross-checked with a Monte Carlo simulation (red dots), and with the help of the following asymptotic expansions (dashed lines): for small $t$
with the expansion \cite{MR1933446,Hagg07}
\begin{equation}\label{eq:airy2asympt1}
\cov(\mathcal{A}_2(t),\mathcal{A}_2(0)) = \var(F_2) -t + O(t^2)\qquad (t\to 0),
\end{equation}
where $F_2$ denotes the Tracy--Widom distribution for GUE (the numerical value of $\var(F_2)$ can be found in Table~\ref{tab:LUE}); for large $t$
with the expansion
 \cite{MR2054175,MR2150191}
\begin{equation}\label{eq:airy2asympt2}
\cov(\mathcal{A}_2(t),\mathcal{A}_2(0)) = t^{-2} +  c t^{-4} + O(t^{-6})\qquad (t\to\infty),
\end{equation}
where the constant $c=-3.542\cdots$ can explicitly be expressed in terms of the Hastings--McLeod solution (\ref{eq:HM}) of Painlevé II.

Similar calculations related to GOE matrix diffusion, and their impact on ``experimentally disproving'' a conjectured determinantal formula, are discussed in a
recent paper by \citeasnoun{MR2448629}.

\subsubsection*{Remark}
In a masterful analytic study, \citeasnoun{MR2150191} proved that the function
$G(t,x,y) = \log\mathbb{P}(\mathcal{A}_2(t)\leq x, \mathcal{A}_2(0) \leq y)$ satisfies the following nonlinear 3rd order partial differential
equation (together with certain asymptotic boundary conditions):
\begin{multline}
t \frac{\partial}{\partial t}\left(\frac{\partial^2}{\partial x^2}- \frac{\partial^2}{\partial y^2}\right) G =
\frac{\partial^3 G}{\partial x^2\partial y}\left(2\frac{\partial^2 G}{\partial y^2}+ \frac{\partial^2 G}{\partial x\partial y}-\frac{\partial^2 G}{\partial x^2}+x-y-t^2\right)\\*[2mm]
-\frac{\partial^3 G}{\partial y^2\partial x}\left(2\frac{\partial^2 G}{\partial x^2}+ \frac{\partial^2 G}{\partial x\partial y}-\frac{\partial^2 G}{\partial y^2}-x+y-t^2\right)\\*[2mm]
+\left(\frac{\partial^3 G}{\partial x^3}\frac{\partial}{\partial y}-\frac{\partial^3 G}{\partial y^3}\frac{\partial}{\partial x}\right)
\left(\frac{\partial}{\partial x}+\frac{\partial}{\partial y}\right) G.
\end{multline}
The reader should contemplate a numerical calculation of the covariance function based on this PDE, rather than directly treating the Fredholm determinant
(\ref{eq:airyprocess}) as suggested in this paper.

\subsection{Operators Acting on Unions of Intervals}\label{sect:union}

Determinants of integral operators $K$ acting on the space $L^2(J_1 \cup \cdots \cup J_N)$ of functions defined on a union of mutually {\em disjoint} open intervals can be dealt with
by transforming them to a matrix kernel determinant \cite[Thm.~VI.6.1]{MR1744872}:
\begin{multline}\label{eq:union1}
\det\left(I- z K\projected{L^2(J_1\cup \cdots \cup J_N)}\right) \\*[2mm]
=
\det\left(I - z
\begin{pmatrix}
K_{11} & \cdots & K_{1N} \\
\vdots & & \vdots \\
K_{N1} & \cdots & K_{NN}
\end{pmatrix}
\projected{L^2(J_1)\oplus\cdots\oplus L^2(J_N)}\right),
\end{multline}
where $K_{ij} : L^2(J_j) \to L^2(J_i)$ denotes the integral operator induced by the given kernel function $K(x,y)$, that is,
\begin{equation}\label{eq:Kij}
K_{ij} u(x) = \int_{J_j} K(x,y) u(y)\,dy\qquad (x\in J_i).
\end{equation}
More general, along the same lines, with $\chi_J$ denoting the characteristic function of an interval $J$ and $z_j \in \C$, we have
\begin{multline}\label{eq:union2}
\det\left(I- \left(K \sum_{j=1}^N z_j \chi_{J_j} \right)\projected{L^2(\R)}\right) \\*[2mm]
 =
\det\left(I -
\begin{pmatrix}
z_1 K_{11} & \cdots & z_N K_{1N} \\
\vdots & & \vdots \\
z_1 K_{N1} & \cdots & z_N K_{NN}
\end{pmatrix}\projected{L^2(J_1)\oplus\cdots\oplus L^2(J_N)}\right),
\end{multline}
where the $K_{ij} : L^2(J_j) \to L^2(J_i)$ denote, once more, the integral operators defined in~(\ref{eq:Kij}).
To indicate the fact
that the operators $K_{ij}$ share one and the same kernel function $K(x,y)$ we also write, by ``abus d'langage'',
\begin{equation}
\det\left(I - z
\begin{pmatrix}
K & \cdots & K \\
\vdots & & \vdots \\
K & \cdots & K
\end{pmatrix}\projected{L^2(J_1)\oplus\cdots\oplus L^2(J_N)}\right)
\end{equation}
for (\ref{eq:union1}) and
\begin{equation}
\det\left(I -
\begin{pmatrix}
z_1 K & \cdots & z_N K \\
\vdots & & \vdots \\
z_1 K & \cdots & z_N K
\end{pmatrix}\projected{L^2(J_1)\oplus\cdots\oplus L^2(J_N)}\right)
\end{equation}
for (\ref{eq:union2}).
Clearly, in both of the cases (\ref{eq:union1}) and (\ref{eq:union2}), we then apply the method of Section~\ref{sect:matrixkern} for the
numerical evaluation of the equivalent matrix kernel determinant.

\subsection{Generalized Spacing Functions}\label{sect:general}

The determinantal formulae (\ref{eq:detGUEn}), (\ref{eq:E2bulk}), (\ref{eq:E2Edge}), (\ref{eq:GSEMatrixKernelDet}), (\ref{eq:detLUEn}), and (\ref{eq:E2hard})
have all the common form
\begin{equation}\label{eq:detEgen}
E(k;J) = \prob(\text{exactly $k$ levels lie in $J$}) = \frac{(-1)^k}{k!} \left. \frac{d^k}{dz^k} \det\left(I-z\,K\projected{L^2(J)}\right)\right|_{z=1},
\end{equation}
which is based on an underlying combinatorial structure that can be extended to describe, for a multi-index $\alpha \in \N_0^N$ and mutually \emph{disjoint} open intervals $J_j$ ($j=1,\ldots,N$),
the generalized spacing function
\begin{subequations}\label{eq:spacinggeneral}
\begin{equation}
E(\alpha;J_1,\ldots,J_N) = \prob(\text{exactly $\alpha_j$ levels lie in $J_j$, $j=1,\ldots,N$})
\end{equation}
by the determinantal formula (see  \citename{MR1253763} \citeyear{MR1253763}, Thm.~6, or \citeyear{MR1657844}, Eq.~(4.1))
\begin{multline}
E(\alpha;J_1,\ldots,J_N) \\*[1mm]
= \frac{(-1)^{|\alpha|}}{\alpha!} \left. \frac{\partial^\alpha}{\partial z^\alpha} \det\left(I - \left(K \sum_{j=1}^N z_j \chi_{J_j}\right)\projected{L^2(\R)}\right)\right|_{z_1=\cdots=z_N=1}.
\end{multline}
\end{subequations}
By the results of Section~\ref{sect:union} and \ref{sect:highder}, such an expression is, in principle at least, amenable to the numerical methods of this paper.
However, differentiation with respect to $l$ different variables $z_j$ means to evaluate an $l$-dimensional Cauchy integral of a function that is calculated from
approximating a Fredholm determinant. This can quickly become very expensive, indeed.

\subsubsection{Efficient numerical evaluation of the finite-dimensional determinants}
Let us briefly discuss the way one would adapt the methods of Section~\ref{sect:finitedet} to the current situation.
We will confine ourselves to the important case of a multi-index $\alpha$ which has the form $\alpha = (k,0,\ldots,0)$. If we are to compute the derivatives
by a Cauchy integral, we have to evaluate
a determinant of the form\footnote{Which is understood to be an approximation of the determinant in (\ref{eq:spacinggeneral}), expressed as the matrix kernel determinant (\ref{eq:union2}).}
\begin{equation}
d(z) = \det(I - (zA \,|\, B)) \qquad A \in \R^{m,p}, B \in \R^{m,m-p}
\end{equation}
for several different arguments $z \in \C$. By putting
\begin{equation}
T = I - (0 \,|\, B),\qquad \tilde A = T^{-1} (A \,|\, 0),
\end{equation}
we have $I - (zA \,|\, B) = T (I - z \tilde A)$
and hence the factorization
\begin{equation}
d(z) = \det(T) \cdot \det(I-z \tilde A)
\end{equation}
to which the results of Section~\ref{sect:finitedet} apply verbatim.

\subsubsection{An example: the joint distribution of the largest two eigenvalues in GUE}

Let us denote by $\lambda_1$ the largest and by $\lambda_2$ the second largest level of GUE in the soft edge scaling limit (\ref{eq:GEedge}). We want to evaluate
their joint probability distribution function
\begin{equation}
F(x,y) = \prob(\lambda_1 \leq x, \lambda_2 \leq y).
\end{equation}
Since $\lambda_1 \leq x \leq y$ certainly implies $\lambda_2 \leq y$, we have
\begin{equation}
F(x,y) = \prob(\lambda_1 \leq x) = F_2(x) \qquad (x \leq y).
\end{equation}
On the other hand, if $x > y$, the open intervals $(y,x)$ and $(x,\infty)$
are disjoint and we obtain for simple combinatorial reasons
\begin{multline}
F(x,y) = E_2^{(\text{soft})}(0,0;(y,x),(x,\infty)) + E_2^{(\text{soft})}(1,0;(y,x),(x,\infty))\\*[1mm]
= E_2^{(\text{soft})}(0;(y,\infty)) + E_2^{(\text{soft})}(1,0;(y,x),(x,\infty))\\*[1mm]
= F_2(y) + E_2^{(\text{soft})}(1,0;(y,x),(x,\infty)) \qquad (x > y).
\end{multline}
By (\ref{eq:spacinggeneral}) and (\ref{eq:union2}) we finally get
\begin{multline}\label{eq:twojoint}
F(x,y)  \\*[1mm]
= \begin{cases}
F_2(x) & \;\, (x \leq y),\\*[2mm]
F_2(y) - \left.\dfrac{\partial}{\partial z} \det\left(I-
\begin{pmatrix}
z \,K_{\Ai} & K_{\Ai} \\
z \,K_{\Ai} & K_{\Ai}
\end{pmatrix}\projected{L^2(y,x)\oplus L^2(x,\infty)}\right)\right|_{z=1} & \;\, (x > y).
\end{cases}
\end{multline}
We have used this formula to calculate the correlation of the largest two levels as
\begin{equation}
\rho(\lambda_1,\lambda_2) = 0.50564\,72315\,9\ldots;
\end{equation}
where all the 11 digits shown are estimated to be correct and the run time was about 16 hours using hardware arithmetic.\footnote{So, this example
stretches our numerical methods pretty much to the edge of what is possible. Note, however, that these numerical results are completely out of the reach of a representation
by partial differential equations.}  So, as certainly was to be expected, the largest two levels
are statistically very much \emph{dependent}. Figure~\ref{fig:joint}.b plots the CDF of the sum of the largest two levels,
\begin{equation}\label{eq:CFDsum}
\prob(\lambda_1+\lambda_2 \leq s) = \int_{-\infty}^\infty \partial_1 F(x,s-x)\,dx,
\end{equation}
and compares the result with the convolution of the individual CDFs of these levels, that is with the $s$-dependent function
\begin{equation}\label{eq:CFDconv}
\int_{-\infty}^\infty F_2'(1;x) F_2(2;s-x)\,dx.
\end{equation}
The clearly visible difference between those two functions is a further corollary of the statistical dependence of the largest two level.

Yet another calculation of joint probabilities in the spectrum of GUE (namely, related to the statistical {\em independence} of the extreme eigenvalues) can be found in \citeasnoun{Bornemann09}.

\section{Software}\label{sect:software}

We have coded the numerical methods of this paper in a Matlab toolbox. (For the time being, it can be obtained from the author upon request by e-mail. At a later stage
it will be made freely available at the web.) In this section we explain the design and use of the toolbox.

\subsection{Low Level Commands}

\subsubsection{Quadrature rule}

The command

\smallskip
{\small\begin{verbatim}
>> [w,x] = ClenshawCurtis(a,b,m)
\end{verbatim}}
\smallskip

\noindent
calculates the $m$-point Clenshaw--Curtis quadrature rules (suitably transformed if $a=-\infty$ or $b=\infty$).
The result is a row vector {\tt w} of weights and a column vector {\tt x} of nodes. This way, the application (\ref{eq:quadrature}) of
the quadrature rule to a (vectorizing) function ${\tt f}$ goes simply by the following command:

\smallskip
{\small\begin{verbatim}
>> w*f(x)
\end{verbatim}}
\smallskip

\noindent
Once called for a specific number $m$, the (untransformed) weights and nodes are cached for later use. As an alternative the toolbox also offers Gauss--Jacobi quadrature,
see Section~\ref{sect:singularity} for its use in the context of algebraic kernel singularities.

\begin{table}[tbp]
\caption{Toolbox commands for kernel functions $K(x,y)$. If the value of $K(x,y)$ is defined as an
integral, there is an additional argument {\tt m} to the command that assigns the number of quadrature points to be used.}
\vspace*{0mm}
\centerline{%
\setlength{\extrarowheight}{5pt}
{\small\begin{tabular}{lllc}\hline
kernel & formula & command & vectorization mode\\*[0.5mm]\hline
 $K_{\sin}(x,y)$     & (\ref{eq:Ksin})       & {\tt sinc(pi*(x-y))}              & {\tt 'grid'} \\
 $V_{\Ai}(x,y)$      & (\ref{eq:K1})         & {\tt airy((x+y)/2)/2}             & {\tt 'grid'} \\
 $V_\alpha(x,y)$     & (\ref{eq:VJ})         & {\tt besselj(alpha,sqrt(x.*y))/2} & {\tt 'grid}  \\*[0.5mm]\hline
 $K_n(x,y)$          & (\ref{eq:Kn})         & {\tt HermiteKernel(n,x,y)}        & {\tt 'outer'}\\
 $K_{\Ai}(x,y)$      & (\ref{eq:KAi})        & {\tt AiryKernel(x,y)}             & {\tt 'outer'}\\*[0.5mm]\hline
 $K_{n,\alpha}(x,y)$ & (\ref{eq:Kna})        & {\tt LaguerreKernel(n,alpha,x,y)} & {\tt 'outer'}\\
 $K_\alpha(x,y)$     & (\ref{eq:Ka})         & {\tt BesselKernel(alpha,x,y)}     & {\tt 'outer'}\\*[0.5mm]\hline
 $S(x,y)$            & (\ref{eq:S})          & {\tt F4MatrixKernel(x,y,m,'SN')}  & {\tt 'outer'}\\
 $S^*(x,y)$          & (\ref{eq:S})          & {\tt F4MatrixKernel(x,y,m,'ST')}  & {\tt 'outer'}\\
 $SD(x,y)$           & (\ref{eq:SD})         & {\tt F4MatrixKernel(x,y,m,'SD')}  & {\tt 'outer'}\\
 $IS(x,y)$           & (\ref{eq:IS})         & {\tt F4MatrixKernel(x,y,m,'IS')}  & {\tt 'outer'}\\*[0.5mm]\hline
 $K_t(x,y)$          & (\ref{eq:Kt})         & {\tt Airy2ProcessKernel(t,x,y,m)} & {\tt 'outer'}\\*[0.5mm]\hline
\end{tabular}}}
\label{tab:kernels}
\end{table}

\subsubsection{Kernels and vectorization modes} The numerical approximation of Fredholm determinants, by (\ref{eq:detfin}) or (\ref{eq:detfinN}),
requires the ability to build, for given $m$-dimensional column vectors $x$ and $y$, the $m\times m$ matrix
\[
A = (K(x_i,y_j))_{i,j=1}^m.
\]
For reasons of efficiency we make a meticulous use of Matlab's vectorization capabilities. Depending on the specific structure of the coding of the kernel
function, we distinguish between two vectorization modes:

\begin{enumerate}
\item {\tt 'grid'}: The matrix $A$ is built from the vectors {\tt x}, {\tt y} and the kernel function {\tt K} by the commands

\smallskip
{\small\begin{verbatim}
>> [X,Y] = ndgrid(x,y);
>> A = K(X,Y);
\end{verbatim}}
\smallskip

\item {\tt 'outer'}: The matrix $A$ is built from the vectors {\tt x}, {\tt y} and the kernel function {\tt K} by the commands

\smallskip
{\small\begin{verbatim}
>> A = K(x,y);
\end{verbatim}}
\smallskip

\end{enumerate}

Table~\ref{tab:kernels} gives the commands for all the kernels used in this paper together with their vectorization modes.

\subsubsection{Approximation of Fredholm determinants}
Having built the matrix $A$ the approximation (\ref{eq:detfin}) is finally evaluated by the following commands:

\smallskip
{\small\begin{verbatim}
>> w2 = sqrt(w);
>> det(eye(size(A))-z*(w2'*w2).*A)
\end{verbatim}}

\subsubsection{Example}\label{sect:softex1}

Let us evaluate the values $F_1(0)$ and $F_2(0)$ of the Tracy--Widom distributions for GOE and GUE by these low level commands
using $m=64$ quadrature points. The reader should observe the different vectorization modes:

\smallskip
{\small\begin{verbatim}
>> m = 64; [w,x] = ClenshawCurtis(0,inf,m); w2 = sqrt(w);
>> [xi,xj] = ndgrid(x,x);
>> K1 = @(x,y) airy((x+y)/2)/2;
>> F10 = det(eye(m)-(w2'*w2).*K1(xi,xj))

   F10 = 0.831908066202953

>> KAi = @AiryKernel;
>> F20 = det(eye(m)-(w2'*w2).*KAi(x,x))

   F20 = 0.969372828355262
\end{verbatim}}
\smallskip

\noindent
A look into \possessivecite{Praehofer03} tables teaches that both results are correct to one unit of the last decimal place.

\subsection{Medium Level Commands}

The number of quadrature points can be hidden from the user by means of the automatic error control of Section~\ref{sect:error}.
This means, we start thinking in terms of the limit of the approximation sequence, that is, in terms of the corresponding integral
operators. This way, we evaluate the operator determinant
\[
\det(I - z K\projected{L^2(J)})
\]
for a given kernel function $K(x,y)$ by the following commands:

\smallskip
{\small\begin{verbatim}
>> K.ker = @(x,y) K(x,y); k.mode = vectorizationmode;
>> Kop = op(K,J);
>> [val,err] = det1m(Kop,z);
\end{verbatim}}
\smallskip

\noindent
(The argument {\tt z} may be omitted in {\tt det1m} if $z=1$.) Here, {\tt val} gives the value of the operator determinant and {\tt err}
is a conservative error estimate. The code tries to observe a given absolute tolerance {\tt tol} that can be set by

\smallskip
{\small\begin{verbatim}
>> pref('tol',tol);
\end{verbatim}}
\smallskip

\noindent
The default is $5\cdot 10^{-15}$, that is, {\tt tol = 5e-15}. The results can be nicely printed in a way such that, within the given error, either
just the correctly \emph{rounded} decimal places are displayed ({\tt printmode = 'round'}), or the correctly \emph{truncated} places ({\tt printmode = 'trunc'}):

\smallskip
{\small\begin{verbatim}
>> pref('printmode',printmode);
>> PrintCorrectDigits(val,err);
\end{verbatim}}
\smallskip

\noindent
For an integral operator $K$, the expressions
\begin{multline*}
\left.\frac{(-1)^k}{k!} \frac{d^k}{dz^k} \det(I-z K)\right|_{z=1},\quad\left.\frac{(-1)^k}{k!} \frac{d^k}{dz^k} \det(I-\sqrt{z}K)\right|_{z=1},\\*[1mm]
\left.\frac{(-1)^k}{k!} \frac{d^k}{dz^k}  \sqrt{\det(I-z K)}\right|_{z=1},
\end{multline*}
are then evaluated, with error estimate, by the following commands:

\smallskip
{\small\begin{verbatim}
>> [val,err] = dzdet(K,k);
>> [val,err] = dzdet(K,k,@sqrt);
>> [val,err] = dzsqrtdet(K,k);
\end{verbatim}}

\subsubsection{Example~\protect\ref{sect:softex1} revisited}\label{sect:softex2}
Let us now evaluate the values $F_1(0)$ and $F_2(0)$ of the Tracy--Widom distributions for GOE and GUE by these medium level commands.

\smallskip
{\small\begin{verbatim}
>> pref('printmode','trunc');
>> K1.ker = @(x,y) airy((x+y)/2)/2; K1.mode = 'grid';
>> [val,err] = det1m(op(K1,[0,inf]));
>> PrintCorrectDigits(val,err);

   0.83190806620295_

>> KAi.ker = @AiryKernel; KAi.mode = 'outer';
>> [val,err] = det1m(op(KAi,[0,inf]));
>> PrintCorrectDigits(val,err);

   0.96937282835526_
\end{verbatim}}
\smallskip

\noindent
So, the automatic error control supposes 14 digits to be correct in both cases;
a look into  \possessivecite{Praehofer03} tables teaches us that this is true, indeed.

\subsubsection{Example: an instance of equation (\ref{eq:conjecture})}\label{sect:instance}

We now give the line of commands that we used to experimentally check the truth of the determinantal equation (\ref{eq:conjecture}) before we worked out the proof. For a single instance
of a real value of $s$ and a complex value of $z$ we obtain:

\smallskip
{\small
\begin{verbatim}
>> s = -1.23456789; z = -3.1415926535 + 2.7182818284i;
>>
>> K11.ker = @(x,y,m) F4MatrixKernel(x,y,m,'SN'); K11.mode = 'outer';
>> K12.ker = @(x,y,m) F4MatrixKernel(x,y,m,'SD'); K12.mode = 'outer';
>> K21.ker = @(x,y,m) F4MatrixKernel(x,y,m,'IS'); K21.mode = 'outer';
>> K22.ker = @(x,y,m) F4MatrixKernel(x,y,m,'ST'); K22.mode = 'outer';
>> K = op({K11 K12; K21 K22},{[s,inf],[s,inf]});
>> val1 = sqrt(det1m(K,z))

   val1 = 1.08629916321436 -    0.0746712169305511i

>> K1.ker = @(x,y) airy((x+y)/2)/2; K1.mode = 'grid';
>> K = op(K1,[s,inf]);
>> val2 = (det1m(K,sqrt(z)) + det1m(K,-sqrt(z)))/2

   val2 = 1.08629916321436 -    0.0746712169305508i

>> dev = abs(val1-val2)

   dev = 5.23691153334427e-016
\end{verbatim}}
\smallskip

\noindent
This deviation is below the default tolerance $5\cdot 10^{-15}$ which was used for the calculation.

\subsection{High Level Commands}
Using the low and medium level commands we straightforwardly coded all the functions that we have discussed in this paper so far. Table~\ref{tab:functions} lists
the corresponding commands. The reader is encouraged to look into the actual code of these commands to see how closely we followed the determinantal formulae of this paper.

\begin{table}[tbp]
\caption{Toolbox commands for all the probability distributions of this paper. A call by {\tt val = E(...)} etc. gives the value; with {\tt [val,err] = E(...)}
etc. we get the value and a conservative error estimate.}
\vspace*{0mm}
\centerline{%
\setlength{\extrarowheight}{5pt}
{\small\begin{tabular}{lll}\hline
function & defining formulae & command\\*[0.5mm]\hline
interval $J=(s_1,s_2)$               &                                                                                             & {\tt J = [s1,s2]}                    \\
interval $J=(s,\infty)$              &                                                                                             & {\tt J = [s,inf]}                    \\
interval $J=(-\infty,s)$             &                                                                                             & {\tt J = [-inf,s]}                   \\*[0.5mm]\hline
$E_2^{(n)}(k;J)$                     &  (\ref{eq:detGUEn})                                                                         & {\tt E(2,k,J,n)}                     \\
$E_2^{(n)}((k,0);J_1,J_2)$           &  (\ref{eq:spacinggeneral})                                                                  & {\tt E(2,[k,0],\{J1,J2\},n)}         \\
$E_2^{(\text{bulk})}(k;J)$           &  (\ref{eq:E2bulk})                                                                          & {\tt E(2,k,J,'bulk')}                \\
$E_2^{(\text{bulk})}((k,0);J_1,J_2)$ &  (\ref{eq:spacinggeneral})                                                                  & {\tt E(2,[k,0],\{J1,J2\},'bulk')}    \\
$E_2^{(\text{soft})}(k;J)$           &  (\ref{eq:E2Edge})                                                                          & {\tt E(2,k,J,'soft')}                \\
$E_2^{(\text{soft})}((k,0);J_1,J_2)$ &  (\ref{eq:spacinggeneral})                                                                  & {\tt E(2,[k,0],\{J1,J2\},'soft')}    \\
$F(x,y)$                             &  (\ref{eq:twojoint})                                                                        & {\tt F2Joint(x,y)}                   \\*[0.5mm]\hline
$E_4^{(\text{soft})}(k;J)$           &  (\ref{eq:GSEMatrixKernelDet})                                                              & {\tt E(4,k,J,'soft','MatrixKernel')} \\*[0.5mm]\hline
$E_{\text{LUE}}^{(n)}(k;J,\alpha)$   &  (\ref{eq:detLUEn})                                                                         & {\tt E('LUE',k,J,n,alpha)}           \\
$E_{\text{LUE}}^{(n)}((k,0);J_1,J_2,\alpha)$           &  (\ref{eq:spacinggeneral})                                                & {\tt E('LUE',[k,0],\{J1,J2\},n,alpha)}\\
$E_2^{(\text{hard})}(k;J,\alpha)$    &  (\ref{eq:E2hard})                                                                          & {\tt E(2,k,J,'hard',alpha)}          \\
$E_2^{(\text{hard})}((k,0);J_1,J_2,\alpha)$    &  (\ref{eq:spacinggeneral})                                                        & {\tt E(2,[k,0],\{J1,J2\},'hard',alpha)}\\*[0.5mm]\hline
$E_+(k;s)$                           &  (\ref{eq:Epm})                                                                             & {\tt E('+',k,s)}                     \\
$E_-(k;s)$                           &  (\ref{eq:Epm})                                                                             & {\tt E('-',k,s)}                     \\
$E_\beta(k;s)$                       &  (\ref{eq:Ebeta}), (\ref{eq:E2sum}), (\ref{eq:E1}),  (\ref{eq:E4})                          & {\tt E(beta,k,s)}                    \\*[0.5mm]\hline
$\tilde E_+(k;s)$                    &  (\ref{eq:EpmTilde})                                                                        & {\tt E('+',k,s,'soft')}              \\
$\tilde E_-(k;s)$                    &  (\ref{eq:EpmTilde})                                                                        & {\tt E('-',k,s,'soft')}              \\
$\tilde E_\beta(k;s)$                &  (\ref{eq:EbetaTilde}), (\ref{eq:E2EdgeSum}), (\ref{eq:E4Edge}), (\ref{eq:E1recursion})     & {\tt E(beta,k,s,'soft')}             \\
$F_\beta(k;s)$                       &  (\ref{eq:Fbeta}), (\ref{eq:Fbeta2})                                                        & {\tt F(beta,k,s)}                    \\
$F_\beta(s)$                         &  (\ref{eq:TWbeta})                                                                          & {\tt F(beta,s)}                      \\*[0.5mm]\hline
$E_{+,\alpha}(k;s)$                  &  (\ref{eq:Epmalpha})                                                                        & {\tt E('+',k,s,'hard',alpha)}        \\
$E_{-,\alpha}(k;s)$                  &  (\ref{eq:Epmalpha})                                                                        & {\tt E('-',k,s,'hard',alpha)}        \\
$E_{\beta,\alpha}(k;s)$              &  (\ref{eq:ELE}), (\ref{eq:E2HardSum}), (\ref{eq:E4Hard}), (\ref{eq:E1Hardrecursion})        & {\tt E(beta,k,s,'hard',alpha)}       \\
$F_{\beta,\alpha}(k;s)$              &  (\ref{eq:FLE})                                                                             & {\tt F(beta,k,s,alpha)}              \\*[0.5mm]\hline
\end{tabular}}}
\label{tab:functions}
\end{table}

\subsubsection{Example~\protect\ref{sect:softex2} revisited}
Let us evaluate, for the last time in this paper, the  values~$F_1(0)$ and~$F_2(0)$ of the Tracy--Widom distributions for GOE and GUE, now using those high level commands.

\smallskip
{\small
\begin{verbatim}
>> pref('printmode','trunc');
>> [val,err] = F(1,0);
>> PrintCorrectDigits(val,err);

   0.83190806620295_

>> [val,err] = F(2,0);
>> PrintCorrectDigits(val,err);

   0.96937282835526_
\end{verbatim}}

\subsubsection{Example: checking the k-level spacing functions against a constraint}\label{sect:constraint}

An appropriate way of checking the quality of the automatic error control goes by evaluating certain constraints such as the
mass and mean given in (\ref{eq:spacingconstraint}):

\smallskip
{\small
\begin{verbatim}
>> pref('printmode','round');
>> s = 2.13; beta = 1;
>> mass = 0; errmass = 0; mean = 0; errmean = 0;
>> M = 10; for k=0:M
>>    [val,err] = E(beta,k,s);
>>    mass = mass+val; errmass = errmass+err;
>>    mean = mean+k*val; errmean = errmean+k*err;
>> end
>> PrintCorrectDigits(mass,errmass);

   1.0000000000000_

>> PrintCorrectDigits(mean,errmean);

   2.130000000000__
\end{verbatim}}
\smallskip

\noindent
The results of (\ref{eq:spacingconstraint}) are perfectly matched. The reader is invited to repeat this experiment
with a larger truncation index for the series.

\subsubsection{Example: more general constraints}\label{sect:generalconstraint}

The preceding example can be extended to more general probabilities $E(k;J)$ that are given by a determinantal expression of the form (\ref{eq:detEgen}), that is,
\begin{equation}
E(k;J) = \left.\frac{(-1)^k}{k!} \frac{d^k}{dz^k} \det\left(I - z K\projected{L^2(J)}\right)\right|_{z=1},
\end{equation} for some trace class operator $K\projected{L^2(J)}$. Expanding the entire function
\begin{equation}
d(z) = \det\left(I - z K\projected{L^2(J)}\right)
\end{equation}
into a power series at $z=1$ yields
\begin{subequations}\label{eq:gencon}
\begin{align}
\sum_{k=0}^\infty E(k;J) &= \sum_{k=0}^\infty \frac{(-1)^k}{k!} d^{(k)}(1) = d(0) = 1,\\*[2mm]
\sum_{k=0}^\infty k\, E(k;J) &= -\sum_{k=0}^\infty \frac{(-1)^k}{k!} d^{(k+1)}(1) = -d'(0) = \tr \left(K\projected{L^2(J)}\right);
\end{align}
\end{subequations}
both of which have a probabilistic interpretation \citeaffixed[p.~119]{MR1677884}{see}. Now, for the Airy kernel we get
\begin{multline}
\tr \left(K_{\Ai}\projected{L^2(s,\infty)}\right) = \int_s^\infty K_{\Ai}(x,x)\, dx \\*[1mm]
= \tfrac13 (2s^2 \Ai(s)^2-2s\Ai'(s)^2-\Ai(s)\Ai'(s)),\qquad
\end{multline}
with the specific value (for $s=0$)
\begin{equation}
\tr \left(K_{\Ai}\projected{L^2(0,\infty)}\right) = \frac{1}{9 \Gamma(\tfrac13)\Gamma(\tfrac23)}.
\end{equation}
Now, let us check the quality of the numerical evaluation of (\ref{eq:gencon}) (and the automatic error control) using this value.

\smallskip
{\small
\begin{verbatim}
>> pref('printmode','round');
>> s = 0; beta = 2;
>> mass = 0; errmass = 0; mean = 0; errmean = 0;
>> M = 3; for k=0:M
>>     [val,err] = E(beta,k,s,'soft');
>>     mass = mass+val; errmass = errmass+err;
>>     mean = mean+k*val; errmean = errmean+k*err;
>> end
>> PrintCorrectDigits(mass,errmass);

   1.000000000000__

>> PrintCorrectDigits(mean,errmean);

   0.030629383079___

>> 1/9/gamma(1/3)/gamma(2/3)

   0.0306293830789884

\end{verbatim}}

\noindent
The results are in perfect match with (\ref{eq:gencon}): they are correctly \emph{rounded}, indeed. The reader is invited to play with
the truncation index of the series.

\subsubsection{Example: calculating quantiles}

Quantiles are easy to compute; here come the $5\%$ and $95\%$ quantiles of the Tracy--Widom distribution for GOE:

\smallskip
{\small
\begin{verbatim}
>> F1inv = vec(@(p) (fzero(@(s) F(1,s)-p,0)));
>> F1inv([0.05 0.95])

   -3.18037997693773         0.979316053469556
\end{verbatim}}

\subsection{Densities and Moments}

Probability densities and moments are computed by barycentric interpolation in $m$ Chebyshev points as described in Section~\ref{sect:dens}.
This is most conveniently done by installing the functionality of the {\tt chebfun} package \cite{chebfun}. Our basic command  is then,
for a given cumulative distribution function $F(s)$:

\smallskip
{\small
\begin{verbatim}
>> [val,err,supp,PDF,CDF] = moments(@(s) F(s),m);
\end{verbatim}}
\smallskip

\noindent
If one skips the argument $m$, the number of points will be chosen automatically. The results are: {\tt val} gives the first four moments, that is, mean, variance, skewness,
and kurtosis of the distribution; {\tt err} gives the absolute errors of each of those moments; {\tt supp} gives the numerical support of the density; {\tt PDF} gives
the interpolant (\ref{eq:bary}) of the probability density function $F'(s)$ in form of a {\tt chebfun} object; {\tt CDF} gives the same for the function $F(s)$ itself.
If one sets

\smallskip
{\small
\begin{verbatim}
>> pref('plot',true);
\end{verbatim}}
\smallskip

\noindent
a call of the command {\tt moments} will plot the PDF and the CDF in passing.

\subsubsection{Example: the first four moments of the Tracy--Widom distributions for GOE, GUE, and GSE}\label{sect:exTW}

The following fills in the missing high-precision digits for the GSE in \citeasnoun[Table~1]{Tracy08}; Figure~\ref{fig:TW} is automatically generated in passing:

\smallskip
{\small
\begin{verbatim}
>> pref('printmode','trunc'); pref('plot',true);
>> for beta = [1 2 4]
>>    [val,err] = moments(@(s) F(beta,s), 128);
>>    PrintCorrectDigits(val,err);
>> end

 -1.2065335745820_  1.607781034581__   0.29346452408____  0.1652429384_____
 -1.771086807411__  0.8131947928329__  0.224084203610___  0.0934480876_____
 -2.306884893241__  0.5177237207726__  0.16550949435____  0.0491951565_____
\end{verbatim}}
\smallskip

\begin{figure}[tbp]
\begin{center}
\begin{minipage}{0.49\textwidth}
\vspace*{0.45cm}
\begin{center}
\includegraphics[width=\textwidth]{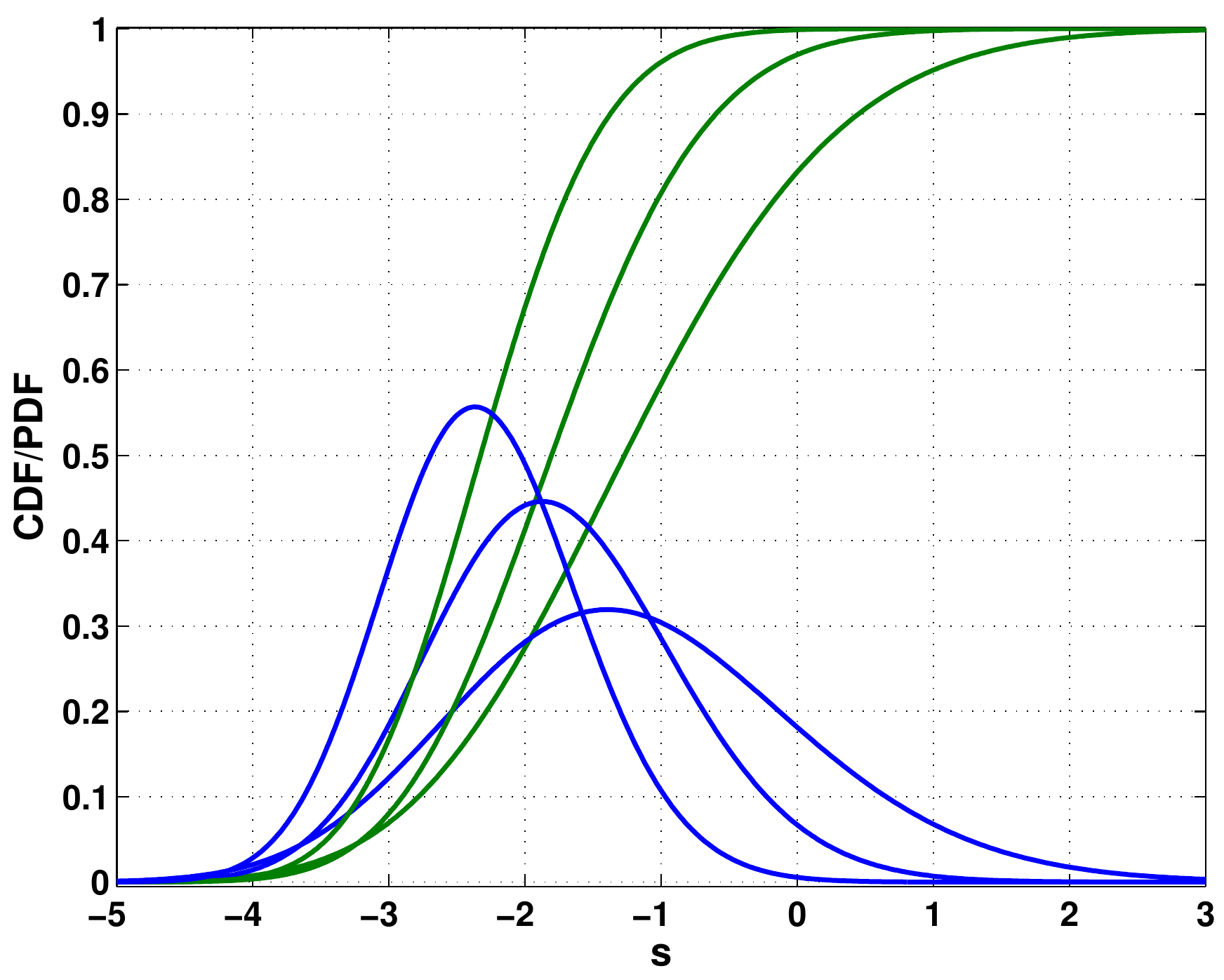}
\end{center}
\end{minipage}
\end{center}\vspace*{-0.0625cm}
\caption{Probability density functions $F'_\beta(s)$ (PDF, blue) and cumulative distribution functions $F_\beta(s)$ (CDF, green) of the Tracy--Widom distributions (\ref{eq:TWbeta})
for GOE ($\beta=1$), GUE ($\beta=2$), and GSE ($\beta=4$); larger $\beta$ go to the left. This plot was automatically generated by the commands in Example~\protect\ref{sect:exTW}.
Compare with \protect\citeasnoun[Fig.~1]{MR1385083} and \protect\citeasnoun[Fig.~1.1]{Dieng05}.}
\label{fig:TW}
\end{figure}

\subsubsection{Example: checking the k-level spacing functions against a constraint}\label{sect:constraint2}

In the final example of this paper we check our automatic error control in dealing with densities.
We take the level spacing densities defined in (\ref{eq:pbeta}) and evaluate the integral
constraints (\ref{eq:integralconstraint}):

\smallskip
{\small
\begin{verbatim}
>> pref('printmode','round'); beta = 1; M = 10; [dom,s] = domain(0,M);
>> E0 = chebfun(vec(@(s) E(beta,0,s)),dom);
>> E1 = chebfun(vec(@(s) E(beta,1,s)),dom);
>> E2 = chebfun(vec(@(s) E(beta,2,s)),dom);
>> p2 = diff(3*E0+2*E1+E2,2);
>> mass = sum(p2); errmass = cheberr(p2);
>> mean = sum(s.*p2); errmean = errmass*sum(s);
>> PrintCorrectDigits(mass,errmass);

   1.0000000000____

>> PrintCorrectDigits(mean,errmean);

   3.000000000_____
\end{verbatim}}
\smallskip

\noindent
Once more, the results of (\ref{eq:integralconstraint}) are perfectly matched. The reader is invited to repeat this experiment
with a larger truncation point for the integrals.

\renewcommand{\thesection}{A}
\section{Appendices}

\subsection{Algebraic Kernel Singularities and Gauss--Jacobi Quadrature}\label{sect:singularity}

If the kernel is not analytic in a complex neighborhood of the interval defining the integral operator, the straightforward
use of the method of Section~\ref{sect:basic}, based on Clenshaw--Curtis quadrature, does not generally yield exponential convergence.
Since the software limits the number of quadrature points (default is a maximum of 256) this will result in rather inaccurate
results (inaccuracies which are detected, however, by the automatic error control). This remark applies in particular to the Laguerre
kernel $K_{n,\alpha}(x,y)$ and the Bessel kernel $K_\alpha(x,y)$ which, for non-integer parameter~$\alpha$, exhibit algebraic singularities at $x=0$ or $y=0$, namely
\begin{subequations}\label{eq:besselsing}
\begin{align}
K_{n,\alpha}(x,y) \propto x^{\alpha/2} \quad(x\to 0),& \qquad K_\alpha(x,y) \propto x^{\alpha/2} \quad(x\to 0),\\*[1mm]
K_{n,\alpha}(x,y) \propto y^{\alpha/2} \quad(y\to 0),& \qquad K_\alpha(x,y) \propto y^{\alpha/2} \quad(y\to 0).
\end{align}
\end{subequations}

\subsubsection{Example: level spacing at the hard edge for non-integer parameter}\label{exam:bessel1}

Hence, for example, if we want to evaluate the values
\[
E_2^{(\text{hard})}(1;(0,6),\pm\tfrac{1}{2}) = - \left.\frac{d}{dz} \det\left(I-z K_{\pm \frac12}\projected{L^2(0,6)}\right)\right|_{z=1},
\]
the method of Section~\ref{sect:basic} gives, using Clenshaw--Curtis quadrature, just 1 digit for the stronger singularity $\alpha=-\tfrac12$ and
6 digits for the weaker singularity $\alpha=\tfrac12$:

\smallskip
{\small
\begin{verbatim}
>> pref('quadrature',@ClenshawCurtis); pref('printmode','trunc');
>> s = 6;
>> alpha = -0.5; K.ker = @(x,y) BesselKernel(alpha,x,y); K.mode = 'outer';
>> [val1,err1] = dzdet(op(K,[0,s]),1);
>> alpha =  0.5; K.ker = @(x,y) BesselKernel(alpha,x,y); K.mode = 'outer';
>> [val2,err2] = dzdet(op(K,[0,s]),1);
>> PrintCorrectDigits([val1 val2],[err1 err2]);

    0.8______________    0.524976_________
\end{verbatim}}
\smallskip

\subsubsection{Gauss--Jacobi quadrature}

This loss of accuracy can be circumvented if we switch from Clenshaw--Curtis quadrature to Gauss--Jacobi quadrature \cite[§15.3]{MR0372517}, which
exactly addresses this type of algebraic singularities at the boundary points of the interval. Specifically, this quadrature rule addresses
functions $f$ on the interval $(a,b)$
such that, for $\alpha,\beta>-1$, the function $\tilde f$ that is obtained from removing the singularities
\begin{equation}
f(x) \propto (x-a)^\alpha\quad (x\to a),\qquad f(x) \propto (b-x)^\beta \quad (x\to b),
\end{equation}
by
\begin{equation}
\tilde f (x) = \frac{f(x)}{(x-a)^\alpha(b-x)^\beta}
\end{equation}
is analytic in a complex neighborhood of $(a,b)$. Then, the $m$-point Gauss--Jacobi quadrature
provides nodes $x_j \in (a,b)$ and
\emph{positive} weights $w_j$ such that the approximation
\begin{equation}
\sum_{j=1}^m w_j f(x_j) \approx \int_a^b f(x)\,dx
\end{equation}
is exponentially convergent: there is a $\rho>1$ such that the error behaves like $O(\rho^{-m})$ for $m\to \infty$. In our toolbox the Gauss--Jacobi quadrature is called
by:

\smallskip
{\small\begin{verbatim}
>> [w,x] = GaussJacobi(a,b,alpha,beta,m)
\end{verbatim}}
\smallskip

\subsubsection{Example~\protect\ref{exam:bessel1} revisited}

The application of the Gauss--Jacobi quadrature to the
Bessel kernel determinant (\ref{eq:E2hard}) requires the determination of the singular exponents that are relevant for the Fredholm determinant. Now, from
(\ref{eq:besselsing}) we infer that the eigenfunctions
\begin{equation}
\int_0^s K_\alpha(x,y) u_\lambda(y)\,dy = \lambda u_\lambda(x)
\end{equation}
of the Bessel kernel exhibit the same type of algebraic singularity at $x\to 0$:
\begin{equation}
u_\lambda(x) \propto x^{\alpha/2}\quad (x\to 0).
\end{equation}
It turns out that the singularity of $K_\alpha(x,y) u_\lambda(y)$ at $y\to 0$ governs the behavior of the determinant approximation, that is, we should take the singular exponent of
\begin{equation}
K_\alpha(x,y) u_\lambda(y) \propto y^\alpha \quad (y\to 0).
\end{equation}
Indeed, with this choice of the singular exponent we get 14 digits for both cases:

\smallskip
{\small
\begin{verbatim}
>> alpha = -0.5; pref('quadrature',@(a,b,m) GaussJacobi(a,b,alpha,0,m));
>> K.ker = @(x,y) BesselKernel(alpha,x,y); K.mode = 'outer';
>> [val1,err1] = dzdet(op(K,[0,s]),1);
>> alpha =  0.5; pref('quadrature',@(a,b,m) GaussJacobi(a,b,alpha,0,m));
>> K.ker = @(x,y) BesselKernel(alpha,x,y); K.mode = 'outer';
>> [val2,err2] = dzdet(op(K,[0,s]),1);
>> PrintCorrectDigits([val1 val2],[err1 err2]);

    0.86114217058328_    0.52497677921859_
\end{verbatim}}
\smallskip

All this is most conveniently hidden from the user who can simply call the high level command that takes care of choosing the appropriate
quadrature rule:

\smallskip
{\small
\begin{verbatim}
>> alpha = -0.5; [val1,err1] = E(2,1,[0,s],'hard',alpha);
>> alpha =  0.5; [val2,err2] = E(2,1,[0,s],'hard',alpha);
>> PrintCorrectDigits([val1 val2],[err1 err2]);

    0.86114217058328_    0.52497677921859_
\end{verbatim}}

\subsubsection*{Remark}
We have studied this particular example because it provides a cross check by the relation
\begin{equation}
E_2^{(\text{hard})}(k;(0,s),\pm\tfrac{1}{2}) = E_\mp(k;2\sqrt{s}/\pi).
\end{equation}
This relation can be obtained from a simple integral transformation that allows us to rewrite the determinant of the Bessel kernel for $\alpha=\pm\tfrac12$ as that
of the odd or even sine kernel. This way (switching back to Clenshaw--Curtis quadrature which is appropriate here) we get values
that are in perfect agreement with the ones obtained for the Bessel kernel with Gauss--Jacobi quadrature:
\smallskip
{\small
\begin{verbatim}
>> [val1,err1] = E('+',1,2*sqrt(s)/pi);
>> [val2,err2] = E('-',1,2*sqrt(s)/pi);
>> PrintCorrectDigits([val1 val2],[err1 err2])

    0.861142170583288    0.524976779218593
\end{verbatim}}

\subsection{Tables of Some Statistical Properties}\label{sect:tables}

We provide some tables of statistical properties of the $k$-level spacing densities $p_\beta(k;s)$, defined in (\ref{eq:pbeta}), and
the distributions $F_\beta(k;s)$ of the $k$-th largest level at the soft edge, defined in (\ref{eq:Fbeta}). The tables display correctly \emph{truncated} digits that have passed the automatic error control
of the software in Section~\ref{sect:software}. Computing times are in seconds.

Note that because of the interrelations between GOE and GSE there is no need to separately tabulate the values for $\beta=4$: first, the interlacing property (\ref{eq:interlacing}) gives $F_4(k;s)=F_1(2k;s)$.
Second, we infer from (\ref{eq:E1}) and (\ref{eq:E4}) that
\begin{equation}
E_4(k;s) = \tfrac{1}{2} E_1(2k-1;2s) + E_1(2k;2s)+\tfrac{1}{2} E_1(2k+1;2s)
\end{equation}
which implies
\begin{equation}
p_4(k;s) = 2p_1(2k+1;2s).
\end{equation}

\begin{table}[tbp]
\caption{Statistical properties of $p_1(k;s)$ for various $k$. Note that because of $p_4(k;s) = 2 p_1(2k+1,2s)$ one can read off
the values for $p_4(k;s)$ from those for $p_1(2k+1;s)$: Just divide the mean by two and the variance by four, skewness and kurtosis remain unchanged.}
\vspace*{-3mm}
\centerline{%
\setlength{\extrarowheight}{3pt}
{\begin{tabular}{crrrrr}\hline
PDF & mean&  variance\phantom{xx} & skewness\phantom{xx} & kurtosis\phantom{xx} & time\\*[0.5mm]\hline
$p_1(0;s)$ & $ 1\;\;\;$ & $0.28553\,06557$ & $0.68718\,99889$ & $  0.37123\,80638$ & $   0.67$\\
$p_1(1;s)$ & $ 2\;\;\;$ & $0.41639\,36889$ & $0.34939\,68438$ & $  0.02858\,27332$ & $   1.22$\\
$p_1(2;s)$ & $ 3\;\;\;$ & $0.49745\,52604$ & $0.22741\,44134$ & $ -0.01329\,56588$ & $   2.59$\\
$p_1(3;s)$ & $ 4\;\;\;$ & $0.55564\,24180$ & $0.16645\,68639$ & $ -0.01994\,68028$ & $   4.24$\\
$p_1(4;s)$ & $ 5\;\;\;$ & $0.60091\,83521$ & $0.13042\,07251$ & $ -0.02007\,29233$ & $   5.81$\\
$p_1(5;s)$ & $ 6\;\;\;$ & $0.63794\,46245$ & $0.10679\,47124$ & $ -0.01884\,07449$ & $   7.43$\\
$p_1(6;s)$ & $ 7\;\;\;$ & $0.66925\,53948$ & $0.09018\,32871$ & $ -0.01743\,19487$ & $   9.46$\\
$p_1(7;s)$ & $ 8\;\;\;$ & $0.69637\,60657$ & $0.07790\,15490$ & $ -0.01613\,54800$ & $  11.04$\\
$p_1(8;s)$ & $ 9\;\;\;$ & $0.72029\,45046$ & $0.06847\,07897$ & $ -0.01500\,75200$ & $  12.80$\\
$p_1(9;s)$ & $10\;\;\;$ & $0.74168\,65573$ & $0.06101\,25387$ & $ -0.01404\,07984$ & $  15.26$\\*[0.5mm]\hline
\end{tabular}}}
\label{tab:3}
\end{table}

\begin{table}[tbp]
\caption{Statistical properties of $p_2$ for various $k$.}
\vspace*{-3mm}
\centerline{%
\setlength{\extrarowheight}{3pt}
{\begin{tabular}{crrrrr}\hline
PDF & mean&  variance\phantom{xx} & skewness\phantom{xx} & kurtosis\phantom{xx} & time\\*[0.5mm]\hline
$p_2(0;s)$ & $ 1\;\;\;$ & $0.17999\,38776$ & $ 0.49706\,36204$ & $   0.12669\,98480$ & $  0.63$\\
$p_2(1;s)$ & $ 2\;\;\;$ & $0.24897\,77536$ & $ 0.24167\,43158$ & $  -0.01494\,23984$ & $  1.36$\\
$p_2(2;s)$ & $ 3\;\;\;$ & $0.29016\,98290$ & $ 0.15542\,00591$ & $  -0.02317\,40428$ & $  1.44$\\
$p_2(3;s)$ & $ 4\;\;\;$ & $0.31944\,35563$ & $ 0.11334\,61773$ & $  -0.02150\,23114$ & $  1.67$\\
$p_2(4;s)$ & $ 5\;\;\;$ & $0.34214\,08054$ & $ 0.08871\,43069$ & $  -0.01914\,18388$ & $  2.09$\\
$p_2(5;s)$ & $ 6\;\;\;$ & $0.36067\,45961$ & $ 0.07263\,43907$ & $  -0.01714\,28515$ & $  2.06$\\
$p_2(6;s)$ & $ 7\;\;\;$ & $0.37633\,63928$ & $ 0.06135\,08835$ & $  -0.01555\,25979$ & $  2.19$\\
$p_2(7;s)$ & $ 8\;\;\;$ & $0.38989\,74631$ & $ 0.05301\,56552$ & $  -0.01428\,79010$ & $  2.41$\\
$p_2(8;s)$ & $ 9\;\;\;$ & $0.40185\,51105$ & $ 0.04661\,73337$ & $  -0.01326\,81121$ & $  2.56$\\
$p_2(9;s)$ & $10\;\;\;$ & $0.41254\,86854$ & $ 0.04155\,73856$ & $  -0.01243\,20513$ & $  2.74$\\*[0.5mm]\hline
\end{tabular}}}
\label{tab:4}
\end{table}

\begin{table}[tbp]
\caption{Statistical properties of $F_1(k;s)$ for various $k$. Note that because of $F_4(k;s) = F_1(2k,s)$ one can directly read off
the values for $F_4(k;s)$ from those for $F_1(2k;s)$.}
\vspace*{-3mm}
\centerline{%
\setlength{\extrarowheight}{3pt}
{\begin{tabular}{crrrrr}\hline
CDF & mean&  variance\phantom{xx} & skewness\phantom{xx} & kurtosis\phantom{xx} & time\\*[0.5mm]\hline
$F_1(1;s)$ & $ -1.20653\,35745$ & $ 1.60778\,10345$ & $ 0.29346\,45240$ & $  0.16524\,29384$ & $   4.59$\\
$F_1(2;s)$ & $ -3.26242\,79028$ & $ 1.03544\,74415$ & $ 0.16550\,94943$ & $  0.04919\,51565$ & $  12.45$\\
$F_1(3;s)$ & $ -4.82163\,02757$ & $ 0.82239\,01151$ & $ 0.11762\,14761$ & $  0.01977\,46604$ & $  30.04$\\
$F_1(4;s)$ & $ -6.16203\,99636$ & $ 0.70315\,81054$ & $ 0.09232\,83954$ & $  0.00816\,06305$ & $  51.24$\\
$F_1(5;s)$ & $ -7.37011\,47042$ & $ 0.62425\,23679$ & $ 0.07653\,98210$ & $  0.00245\,40580$ & $  77.49$\\
$F_1(6;s)$ & $ -8.48621\,83723$ & $ 0.56700\,71487$ & $ 0.06567\,07705$ & $ -0.00073\,42515$ & $ 112.00$\\*[0.5mm]\hline
\end{tabular}}}
\label{tab:5}
\end{table}

\begin{table}[tbp]
\caption{Statistical properties of $F_2(k;s)$ for various $k$.}
\vspace*{-3mm}
\centerline{%
\setlength{\extrarowheight}{3pt}
{\begin{tabular}{crrrrr}\hline
CDF & mean&  variance\phantom{xx} & skewness\phantom{xx} & kurtosis\phantom{xx} & time\\*[0.5mm]\hline
$F_2(1;s)$ & $ -1.77108\,68074$ & $  0.81319\,47928$ & $  0.22408\,42036$ & $   0.09344\,80876$  & $        1.84$\\
$F_2(2;s)$ & $ -3.67543\,72971$ & $  0.54054\,50473$ & $  0.12502\,70941$ & $   0.02173\,96385$  & $        5.44$\\
$F_2(3;s)$ & $ -5.17132\,31745$ & $  0.43348\,13326$ & $  0.08880\,80227$ & $   0.00509\,66000$  & $       10.41$\\
$F_2(4;s)$ & $ -6.47453\,77733$ & $  0.37213\,08147$ & $  0.06970\,92726$ & $  -0.00114\,15160$  & $       17.89$\\
$F_2(5;s)$ & $ -7.65724\,22912$ & $  0.33101\,06544$ & $  0.05777\,55438$ & $  -0.00405\,83706$  & $       25.56$\\
$F_2(6;s)$ & $ -8.75452\,24419$ & $  0.30094\,94654$ & $  0.04955\,14791$ & $  -0.00559\,98554$  & $       34.72$\\*[0.5mm]\hline
\end{tabular}}}
\label{tab:6}
\end{table}

\section*{Acknowledgements}

The author expresses his gratitude to Craig Tracy and an anonymous referee for pointing out some early references;
and to Peter Forrester for various remarks on a preliminary version of this paper, for comments relating to the work of \citeasnoun{MR2229797}
and its proof of (\ref{eq:conjecture2}), and for his suggestion to include the tables of Section~\ref{sect:tables}.

\bibliographystyle{kluwer}
\bibliography{article}

\end{document}